\def\co{\colon\thinspace}
\DeclareMathAlphabet{\mathsfsl}{OT1}{cmss}{m}{sl}
\def\spinc{\mathrm{Spin}^c}
\def\relspinc{\underline{\mathrm{Spin}}^c}
\newcommand{\Bslash}{\backslash\hspace{-3pt}\backslash}
\def\ond{\nu^{\circ}}
\newtheorem{thm}{Theorem}[section]
\newtheorem{lem}[thm]{Lemma}
\newtheorem{cor}[thm]{Corollary}
\newtheorem{prop}[thm]{Proposition}
\newtheorem{conj}[thm]{Conjecture}
\theoremstyle{definition}
\newtheorem{defn}[thm]{Definition}
\newtheorem{rem}[thm]{Remark}
\begin{document}

\title{Property G and the $4$--genus}

\author{{\Large Yi NI}\\{\normalsize Department of Mathematics, Caltech, MC 253-37}\\
{\normalsize 1200 E California Blvd, Pasadena, CA
91125}\\{\small\it Email\/:\quad\rm yini@caltech.edu}}

\date{}

\maketitle

\begin{abstract}
We say a null-homologous knot $K$ in a $3$--manifold $Y$ has Property G, if the Thurston norm and fiberedness of the complement of $K$ is preserved under the zero surgery on $K$.
In this paper, we will show that, if the smooth $4$--genus of $K\times\{0\}$ (in a certain homology class) in $(Y\times[0,1])\#N\overline{\mathbb CP^2}$, where $Y$ is a rational homology sphere, is smaller than the Seifert genus of $K$, then $K$ has Property G. When the smooth $4$--genus is $0$, $Y$ can be taken to be any closed, oriented $3$--manifold.
\end{abstract}

\

\section{Introduction}

 A general theme in the study of Dehn surgery is to get the information about the topology of Dehn surgery from the topology of the original knot. In this paper, we will consider the following situation.
If $F$ is a Seifert surface for a null-homologous knot $K\subset Y$, then there is a closed surface $\widehat F$ in the zero surgery on $K$ obtained by capping off $\partial F$ with a disk.  
Suppose that we know a certain property of $F$ as a subsurface of $Y\setminus K$, can we deduce similar properties for $\widehat F$?

Before we explain this problem in more detail, let us establish some notations we will use.
If $Z$ is a submanifold of a manifold $Y$, let $\nu(Z)$ be a closed tubular neighborhood of $Z$ in $Y$, let $\ond(Z)$ be the interior of $\nu(Z)$, and let $Y\Bslash Z=Y\backslash \ond(Z)$.
Given a null-homologous knot $K$ in a $3$--manifold $Y$, and $\frac pq\in\mathbb Q\cup\{\infty\}$, let $Y_{\frac pq}(K)$ be the manifold obtained by $\frac pq$--surgery on $K$.

All manifolds are smooth and oriented unless otherwise stated.

\begin{defn}
Suppose that $K$ is a null-homologous knot in a closed $3$--manifold
$Y$. A compact surface $F\subset Y$ is a {\it Seifert-like
surface} for $K$, if $\partial F=K$. The homology class $[F]\in H_2(Y,K)$ is called a {\it Seifert homology class}. 
When $F$ is connected, we say
that $F$ is a {\it Seifert surface} for $K$. We also view a
Seifert-like surface as a properly embedded surface in $Y\Bslash K$. 
\end{defn}

\begin{defn}
Suppose that $M$ is a compact $3$--manifold, a properly embedded
surface $S\subset M$ is {\it taut} if $\chi_-(S)=\chi_-([S])$ in
$H_2(M,\partial M)$, no proper subsurface of
$S$ is null-homologous, and for any non-sphere component $S_i$ of $S$, $[S_i]$ is not represented by a sphere. Here $\chi_-(\cdot)$ is the Thurston norm, see Subsection~\ref{subsect:ThurstonNorm}.
\end{defn}

\begin{defn}\label{defn:PropG}
Suppose that $K$ is a null-homologous knot in a closed $3$--manifold
$Y$, $\varphi\in H_2(Y,K)$ is the homology class of a Seifert-like surface. We say $K$ has {\it Property G} with respect to $\varphi$, if the following conditions
hold:

\noindent(G1) If $F$ is a taut Seifert-like surface with $[F]=\varphi$, then $\widehat F$ is 
taut in $Y_0(K)$;

\noindent(G2) if $Y_0(K)$ fibers over $S^1$, such that the
homology class of the fiber is the natural extension $\widehat{\varphi}$ of $\varphi$, then $K$ is a fibered knot, and
the homology class of the fiber is $\varphi$.

If the first (or second) condition holds, then we say that $K$ has
{\it Property G1 ({\rm or} G2)} with respect to $\varphi$. If $K$ has Property G ({or} G1, G2) with respect to every Seifert homology class, then we say $K$ has {\it Property G ({\rm or} G1, G2)}.
\end{defn}

Gabai \cite{G3} proved that knots in $S^3$ have Property G. As he remarked in \cite{G3}, the proof also works when $Y$ is reducible, $Y-K$ is irreducible and $H_1(Y)$ is torsion-free. It is not hard to see connected sums of knots have Property G. In \cite{NiFibred,AiNi}, it is showed that
null-homologous knots in L-spaces have Property G2, and the same argument also implies that such knots have Property G1. In \cite{NiNSSphere}, we proved that if $Y$ contains a non-separating sphere and $Y-K$ is irreducible, then $K$ has Property G. 

\begin{rem}
The reason that we want to use Seifert-like surfaces instead of Seifert surfaces in Definition~\ref{defn:PropG} is that if $b_1(Y)>0$, not every Seifert homology class is represented by a taut Seifert surface. For example, let $Y=Y_1\#Y_2$, $K\subset Y_1$, and $Y_2$ is irreducible. If a Seifert homology class $\varphi\in H_2(Y,K)\cong H_2(Y_1,K)\oplus H_2(Y_2)$ has a nonzero component in $H_2(Y_2)$, then any taut surface representing $\varphi$ has to be disconnected.
\end{rem}

\begin{thm}\label{thm:4genus}
Let $Y$ be a closed, oriented, connected $3$--manifold, and let $K\subset Y$ be a null-homologous knot.  
Suppose that $F$ is a taut Seifert surface for $K$. Let $X=(Y\times[0,1])\#N\overline{\mathbb CP^2}$ for some $N$, and let \[\iota\co (Y,K)\to (X,K\times\{0\})\]
be the inclusion map.
If there exists a properly embedded smooth connected surface $G\subset X$ with $\partial G=K\times\{0\}$, $[G]=\iota_*[F]\in H_2(X,K\times\{0\})$, and $g(G)<g(F)$,
then $K$ has Property G with respect to $[F]$.
\end{thm}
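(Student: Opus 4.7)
The plan is to reduce Property G with respect to $[F]$ to a Heegaard Floer non-vanishing statement and use the hypothesis on $G$ via the adjunction inequality to produce it. By the Thurston norm and fiberedness detection theorems of Ozsv\'ath--Szab\'o, extended to null-homologous knots in arbitrary closed $3$-manifolds in \cite{NiFibred,AiNi}, the tautness of $F$ in $Y\Bslash K$ is equivalent to $\widehat{HFK}(Y,K,[F],g(F))\neq 0$ in the top Alexander grading, while the tautness of $\widehat F$ in $Y_0(K)$ (respectively, the fiberedness of $Y_0(K)$ with fiber class $[\widehat F]$) corresponds to the non-vanishing (respectively, the monogeneity) of $HF^+(Y_0(K),\mathfrak t)$ for the $\spinc$ structure $\mathfrak t$ satisfying $\langle c_1(\mathfrak t),[\widehat F]\rangle = 2g(F)-2$. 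The tautness hypothesis on $F$ supplies the $\widehat{HFK}$ non-vanishing, so the task reduces to transporting this across to the $HF^+$ side.

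To effect the transport, I would attach a $0$-framed $2$-handle to $X$ along $K\times\{0\}\subset Y\times\{0\}$ and form $X'=X\cup W$, where $W$ is the standard $2$-handle cobordism from $Y$ to $Y_0(K)$. Then $X'$ is diffeomorphic to $W\#N\overline{\mathbb{CP}^2}$, viewed as a cobordism from $Y$ to $Y_0(K)$. Capping off $G$ by the core disk $D$ of the $2$-handle produces a closed, connected, embedded surface $\widehat G=G\cup D\subset X'$ of genus $g(G)<g(F)$, with self-intersection $0$ (inherited from the framing) and representing in $H_2(X')$ the same class as $\widehat F$ pushed off the $Y_0(K)$ boundary (using $[G]=\iota_*[F]$ in $H_2(X,K\times\{0\})$). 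The Ozsv\'ath--Szab\'o adjunction inequality applied to $\widehat G$ then forces the cobordism map $F_{X',\mathfrak s}$ to vanish whenever $|\langle c_1(\mathfrak s),[\widehat G]\rangle|>2g(G)-2$.

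To finish, I would combine this vanishing with the blow-up formula (reducing $F_{X',\mathfrak s}$ to the cobordism map $F_{W,\mathfrak s|_W}$ of the unblown cobordism, up to sign) and the Ozsv\'ath--Szab\'o mapping cone formula for $0$-surgery relating $CFK^\infty(Y,K)$ to $HF^+(Y_0(K))$. The nonzero class supplied by $\widehat{HFK}(Y,K,[F],g(F))$ sits in the extremal Alexander summand of the mapping cone, and the adjunction vanishing (coming from $g(G)<g(F)$) ensures that no differential from or to neighbouring $\spinc$ summands can annihilate it; the class therefore survives as a nonzero (respectively, generating) element of $HF^+(Y_0(K),\mathfrak t)$, yielding (G1) and, by the monogeneity variant, (G2). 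The main obstacle I anticipate is this last step: cleanly matching up the adjunction vanishing on the blown-up cobordism with potential mapping cone cancellations, with care taken for the $N$ copies of $\overline{\mathbb{CP}^2}$ via the blow-up formula and for the borderline case $g(F)=1$, where the extremal Alexander grading is adjacent to the $\spinc$ structure carrying the infinite tower.
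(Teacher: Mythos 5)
Your reduction of Property G to Floer-theoretic statements (top $\widehat{HFK}$ detects tautness of $F$; $HF^+(Y_0(K),\mathfrak t)$ in the class with $\langle c_1(\mathfrak t),[\widehat F]\rangle=2g(F)-2$ detects tautness/fiberedness on the surgered side) matches the paper, but the transport step is where your proposal has a genuine gap, and it is exactly the step you defer. Capping $G$ inside the blown-up $0$--framed $2$--handle cobordism $X'\colon Y\to Y_0(K)$ and invoking adjunction gives, at best, vanishing of the maps $F^+_{X',\mathfrak s}\colon HF^+(Y)\to HF^+(Y_0(K))$ for large pairing with $[\widehat G]$ (and even that statement is wrong as written when $g(G)=0$, since adjunction vanishing needs genus $\ge 1$). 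Vanishing of maps \emph{into} $HF^+(Y_0(K))$ cannot produce the nonvanishing of $HF^+(Y_0(K),\mathfrak t_{g-1})$ needed for (G1), nor the upper bound $\mathrm{rank\:}\widehat{HFK}(Y,K,[F],g(F))\le 1$ needed for (G2). Moreover, your picture of "differentials from or to neighbouring $\spinc$ summands" misreads the $0$--surgery mapping cone: for a fixed $\mathfrak t_k$ it is simply the cone of $v^+_k+h^+_k\co A^+_k\to B^+$, with no adjacent summands, so the only issue is whether the class seen by top $\widehat{HFK}$ (which lives in the cone of $v^+_{k}$ alone, via the triangle (\ref{eq:v(d-1)})) survives once $h^+_k$ is added --- and nothing in your outline controls $h^+_k$.

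The paper's mechanism is different: it works in the \emph{large} surgery cobordism $W'_n(K)\#N\overline{\mathbb CP^2}$, where by Theorem~\ref{thm:LargeSurg} the two relevant cobordism maps literally are $v^+_k$ and $h^+_k$; capping $-G$ with the cocore of the $2$--handle and applying the adjunction \emph{relation} (Theorem~\ref{thm:AdjRel}, an identity $F^\circ_{W,\mathfrak S}=F^\circ_{W,\mathfrak S+\epsilon\mathrm{PD}[\Sigma]}(i_*\zeta)$, not just an inequality) together with the blow-up formula yields $\mathrm{im}(h^+_k)_*\subset\mathrm{im}(v^+_k)_*$ for $k\ge g(G)$, and $(v^+_0)_*=(h^+_0)_*$ when $G$ is a disk (Lemma~\ref{lem:vhIm}). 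This image containment feeds into a purely algebraic rank comparison between the cones of $v+h$ and of $v$ (Proposition~\ref{prop:ZeroTorsEqual}), which itself requires a case analysis (nontorsion $c_1(\mathfrak s)$, $HF^+(Y,\mathfrak s)=0$, or $d>1$, using grading-shift arguments) and the general-coefficient/nontorsion version of the $0$--surgery mapping cone theorem that the paper proves in Section~\ref{sect:ZeroMC} --- a point you elide by citing the formula as if it were available for arbitrary $b_1(Y)$. Finally, the borderline case $g(F)=1$ (forcing $G$ to be a disk) cannot be handled by these untwisted arguments at all: the paper uses $(v^+_0)_*=(h^+_0)_*$, the twisted mapping cone (Theorem~\ref{thm:ZeroMCTwisted}) with Novikov coefficients, the Ai--Peters computation for torus bundles, and the Gabai--Lackenby input of Proposition~\ref{prop:b1>0} to exclude non-separating spheres and to detect the torus fiber. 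Your proposal flags this case but offers no strategy for it, so both the central step and the genus-one case are missing.
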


The conclusion of Theorem~\ref{thm:4genus} is just about Property G with respect to a special homology class $[F]$, and $F$ is indeed a Seifert surface. When $Y$ is a rational homology sphere, there is only one such homology class, and any taut Seifert-like surface must be connected, so we have the following special case which is worth mentioning.

\begin{cor}\label{cor:Rational}
Let $Y$ be a closed connected $3$--manifold with $b_1(Y)=0$, and let $K\subset Y$ be a null-homologous knot. If the smooth $4$--genus of $K\times\{0\}$ in $X=(Y\times[0,1])\#N\overline{\mathbb CP^2}$, with the homology class of the surface in the image of $\iota_*$, is smaller than the Seifert genus of $K$ for some $N$, then $K$ has Property G.
\end{cor}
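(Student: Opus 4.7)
The plan is to deduce Corollary~\ref{cor:Rational} as a direct consequence of Theorem~\ref{thm:4genus}, by verifying two features special to the rational homology sphere setting: (i) there is a \emph{unique} Seifert homology class for $K$, and (ii) every taut Seifert-like surface representing it is automatically connected, hence a genuine Seifert surface. Once (i) and (ii) are in place, the corollary's hypothesis supplies exactly the surface $G$ that Theorem~\ref{thm:4genus} demands, and the theorem's conclusion upgrades to unqualified Property G.

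For (i), I would feed $b_1(Y)=0$ (so $H_2(Y;\mathbb{Z})=0$ and $H_1(Y;\mathbb{Z})$ is finite) and the null-homologous condition (the map $i_*\co H_1(K)\to H_1(Y)$ vanishes) into the long exact sequence of the pair,
\[
H_2(Y)\longrightarrow H_2(Y,K)\longrightarrow H_1(K)\xrightarrow{i_*} H_1(Y),
\]
to conclude $H_2(Y,K;\mathbb{Z})\cong H_1(K;\mathbb{Z})\cong\mathbb{Z}$, with a unique generator $\varphi$ mapping to $[K]$; this is the unique Seifert homology class. For (ii), identify $H_2(Y\Bslash K,\partial(Y\Bslash K))\cong H_2(Y,K)$ by excision and the deformation retraction $\nu(K)\simeq K$; the same exact sequence shows that the boundary map is injective. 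If a taut Seifert-like surface $F$ had a closed component $F_0$, then $\partial[F_0]=0$ in $H_1(K)$ would force $[F_0]=0$ in $H_2(Y\Bslash K,\partial)$, making $F_0$ a null-homologous proper subsurface of $F$ and contradicting tautness. Hence every component of $F$ has nonempty boundary, and since $\partial F=K$ is connected, $F$ is itself connected.

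With (i) and (ii) established, let $F$ be a taut Seifert surface of genus $g(K)$. The corollary's hypothesis yields a connected properly embedded $G\subset X$ with $\partial G=K\times\{0\}$, $g(G)<g(F)$, and $[G]\in\iota_*H_2(Y,K)=\mathbb{Z}\,\iota_*\varphi$. Writing $[G]=n\,\iota_*\varphi$ and applying the boundary map gives $n[K]=[K]$, hence $n=1$ and $[G]=\iota_*[F]$. Theorem~\ref{thm:4genus} then applies verbatim and yields Property G with respect to $[F]=\varphi$; by the uniqueness of $\varphi$ this is Property G without qualification, and the corollary follows. The only point to watch is the bookkeeping that promotes the weaker ``image of $\iota_*$'' hypothesis to the exact equality $[G]=\iota_*[F]$ required by the theorem, but this is settled by the boundary argument above and poses no substantive obstacle.
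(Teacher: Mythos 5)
Your proposal is correct and follows essentially the same route as the paper, which deduces the corollary from Theorem~\ref{thm:4genus} by observing that for $b_1(Y)=0$ there is a unique Seifert homology class and every taut Seifert-like surface in it is connected; your long-exact-sequence and boundary-map bookkeeping (including pinning down $[G]=\iota_*[F]$) just makes these observations explicit. The only point you assert without proof---that a taut Seifert surface realizing the Seifert genus exists in this class---is likewise taken for granted in the paper, so there is no substantive divergence.
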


If the above $4$--genus is $0$, then the conclusion of Theorem~\ref{thm:4genus} can be strengthened to assert that $K$ has Property G even when $b_1(Y)>0$.

\begin{thm}\label{thm:Slice}
Let $Y$ be a closed, oriented $3$--manifold, and let $K\subset Y$ be a knot. If $K\times\{0\}$ bounds a properly embedded smooth disk $G$ in $X=(Y\times[0,1])\#N\overline{\mathbb CP^2}$, with $[G]$ in the image of $\iota_*$, then $K$ has Property G.  
\end{thm}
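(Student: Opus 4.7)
The plan is to reduce Theorem~\ref{thm:Slice} to Theorem~\ref{thm:4genus} one Seifert homology class at a time, exploiting the extra flexibility afforded by $g(G) = 0$. Since $X$ deformation retracts onto $Y$ wedged with a bouquet of $2$--spheres, the inclusion induces an injection $\iota_* \co H_2(Y,K) \to H_2(X, K\times\{0\})$, so the disk $G$ represents a unique Seifert homology class $\psi$. For the class $\psi$ itself, Theorem~\ref{thm:4genus} applies at once: if $K$ bounds an embedded disk in $Y$ in class $\psi$ then $K$ is isotopic to an unknot and Property G is trivial, otherwise $g(G) = 0 < g(F)$ holds for any taut Seifert surface $F$ of class $\psi$, giving Property G with respect to $\psi$.

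For an arbitrary Seifert homology class $\varphi$ (which may differ from $\psi$ when $b_1(Y) > 0$), I would write $\varphi = \psi + \beta$ with $\beta \in H_2(Y)$, choose an embedded closed surface $A \subset Y$ representing $\beta$, and form the surface $G_\varphi := G \sqcup (A \times \{\tfrac{1}{2}\}) \subset X$. Then $\partial G_\varphi = K \times \{0\}$ and $[G_\varphi] = \iota_*(\varphi)$. The attractive feature of this construction is that, for any taut Seifert-like surface $F = F_0 \cup C$ representing $\varphi$ (with $F_0$ the component containing $K$ and $C$ closed), the capped-off closed surface $F_0 \cup_K (-G) \subset X$ has genus only $g(F_0)$, because $G$ is a disk; this is precisely the low-genus closed surface that one expects to drive the adjunction-type input inside Theorem~\ref{thm:4genus}, while the closed pieces $A \times \{\tfrac{1}{2}\}$ and $C$ perturb only the $H_2(Y)$ component of the ambient class.

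The main obstacle is that $G_\varphi$ is disconnected, so Theorem~\ref{thm:4genus} as stated does not apply. I would revisit its proof and verify that it extends to this kind of surface: the disk component furnishes the low-genus capping surface needed for the adjunction-style bound, and the closed components should be harmless provided they are chosen Thurston-norm-minimizing in $Y$. A cleaner alternative would be to tube $G$ to $A \times \{\tfrac{1}{2}\}$ and appeal to Theorem~\ref{thm:4genus} as a black box, but the resulting connected surface has genus $g(A)$, and the tubing construction only yields the inequality $g_\varphi \leq g_\psi + g(A)$ (where $g_\varphi$ and $g_\psi$ are the connected Seifert genera of $K$ in the two classes), which does not give the $g(A) < g_\varphi$ needed. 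Once Property G1 is established, Property G2 should follow along standard lines: a fibration of $Y_0(K)$ with fiber class $\widehat{\varphi}$ restricts, after cutting along the belt $2$--sphere of the $0$--surgery, to a fibration of $Y \Bslash K$ with fiber realizing $\varphi$.
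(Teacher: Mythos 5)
Your reduction to Theorem~\ref{thm:4genus} does not go through, and the place where it breaks is exactly where the paper has to do genuinely new work. For a Seifert class $\varphi\ne\psi$ your surface $G_\varphi=G\sqcup(A\times\{\tfrac12\})$ is disconnected, and Theorem~\ref{thm:4genus} requires a \emph{connected} $G$ (and also a connected taut $F$, which for some classes $\varphi$ with $b_1(Y)>0$ does not exist -- see the remark after Definition~\ref{defn:PropG}); saying you ``would revisit its proof and verify that it extends'' is the whole problem, not a step of the proof. The obstruction is concrete: the mechanism inside Theorem~\ref{thm:4genus} is Lemma~\ref{lem:vhIm}, which applies the adjunction relation (Theorem~\ref{thm:AdjRel}) to a \emph{connected} embedded surface in $W'_n(K)\#N\overline{\mathbb CP^2}$ obtained by capping $-G$ with the cocore of the $2$--handle, and the relevant $\spinc$ structures $\mathfrak x_k,\mathfrak y_k$ are tied to the filtration coming from the class of that surface. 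Adding closed components (or tubing, which as you note raises the genus) changes the homology class and the $c_1$ evaluations, so you do not get the needed containment $\mathrm{im}(h_k^+)_*\subset\mathrm{im}(v_k^+)_*$ for the $\varphi$--filtration. The paper's way around this is different in kind: it never tries to represent $\iota_*\varphi$ by a small surface. It keeps the disk's own class $\varphi_0=[\pi(G)]$, proves $(v_0^+)_*=(h_0^+)_*$ for the $\varphi_0$--filtration, and then transfers the conclusion to an arbitrary taut class $\varphi$ via Lemma~\ref{lem:SpincDiff} together with Lemma~\ref{lem:Top>0}, whose proof uses Gabai's theorem that the singular Thurston norm equals the Thurston norm (glue the immersed disk $\pi(-G)$ to $F$ to bound the norm of $\varphi-\varphi_0$) plus the adjunction inequality. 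That comparison step, and the residual torsion/low-degree cases it leaves ($d=1$, $\chi_-(F)=1$, possible sphere components, torus-bundle fibers), which the paper handles with twisted Novikov coefficients, Ai--Peters, and Propositions~\ref{prop:b1>0} and~\ref{prop:NoSphere1}, are entirely absent from your outline. There is also a circularity hazard: the $g(F)=1$ case of Theorem~\ref{thm:4genus} is itself deduced from the $\chi_-(F)=1$ case of Theorem~\ref{thm:Slice}, so you cannot quote Theorem~\ref{thm:4genus} as a black box for that range.

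Your final sentence about Property G2 is also wrong. A fibration of $Y_0(K)$ does not ``restrict'' to a fibration of $Y\Bslash K$ after cutting along anything (there is no belt $2$--sphere in a Dehn filling, and the dual knot need not be transverse to the fibers); if such a cut-and-paste argument worked, Property G2 would be a triviality rather than a theorem. The paper proves G2 by showing, via the rank inequality of Proposition~\ref{prop:ZeroTorsEqual} (and, in the genus-one/torus-bundle case, the twisted mapping cone of Theorem~\ref{thm:ZeroMCTwisted}), that $\widehat{HFK}(Y,K,\varphi)$ in the top Alexander grading has rank one, and then invoking fibredness detection, Theorem~\ref{thm:KnotFibered}.
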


Since Property G does not depend on the orientation of the manifolds involved, one can replace the $X$ in Theorems~\ref{thm:4genus} and~\ref{thm:Slice} with $(Y\times[0,1])\#N{\mathbb CP^2}$, the conclusions of these two theorems are still true. However, our current methods break down if $X$ is replaced with $(Y\times[0,1])\#N(\mathbb CP^2\#\overline{\mathbb CP^2})$. 

\begin{conj}\label{conj:Immerse}
The conclusion of Theorem~\ref{thm:4genus} still holds if $X$ is replaced with $(Y\times[0,1])\#N(\mathbb CP^2\#\overline{\mathbb CP^2})$.
\end{conj}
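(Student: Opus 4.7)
The plan is to adapt the proof of Theorem~\ref{thm:4genus} to the indefinite ambient $(Y\times[0,1])\#N(\mathbb CP^2\#\overline{\mathbb CP^2})$ by reducing to the case already handled there. Since the argument for Theorem~\ref{thm:4genus} presumably controls the Thurston norm of $\widehat F\subset Y_0(K)$ through an adjunction-type inequality in which the negative definiteness of $\overline{\mathbb CP^2}$ is essential, the natural idea is to remove the positive $\mathbb CP^2$ contribution from $G$ by a geometric surgery that preserves the homology class $[G]=\iota_*[F]$ without inflating the genus beyond $g(F)$.

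Concretely, I would first perturb $G$ to meet each separating neck $S^3$ transversely, so that inside each $\mathbb CP^2\#\overline{\mathbb CP^2}$ summand one obtains a properly embedded piece of relative class $aH+bE$, where $H$ generates $H_2(\mathbb CP^2)$ and $E$ generates $H_2(\overline{\mathbb CP^2})$. Tubing this piece with $|a|$ generic copies of a line $\mathbb CP^1\subset\mathbb CP^2$ and resolving the $\binom{|a|}{2}$ pairwise intersections cancels the $aH$ component at the cost of $\binom{|a|}{2}$ extra handles. The modified piece is now null-homologous in the $\mathbb CP^2$ factor and can be excised and replaced by a surface on the $\overline{\mathbb CP^2}$ side of comparable genus. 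If $\sum_i\binom{|a_i|}{2}$ is small enough that the final surface still has genus less than $g(F)$, it now lies in $(Y\times[0,1])\#N\overline{\mathbb CP^2}$ and Theorem~\ref{thm:4genus} yields Property G.

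To bound the $|a_i|$ a priori, I would try to invoke the Kronheimer--Mrowka adjunction inequality for $\mathbb CP^2$, which forces any smooth closed surface in class $aH$ to have genus at least $\frac{(|a|-1)(|a|-2)}{2}$; combined with a doubling or capping construction along the link cut out by $G$ on each $S^3$ neck, this should bound $g(G)$ from below in terms of the $|a_i|$, and so bound the $|a_i|$ from above in terms of $g(F)$. The main obstacle, and probably the fatal one, is the mixed-sign regime: $G$ can enter a summand with a link $L$ along which $aH+bE$ has both $|a|$ and $|b|$ large but $a^2-b^2$ small, so that the piece behaves like a surface in $S^2\times S^2$ that neither blow-down nor any definiteness-based adjunction inequality obstructs. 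Resolving this case would likely require a genuinely new ingredient --- such as Pin(2)-equivariant monopole Floer homology or a Bauer--Furuta refinement of the Ozsváth--Szabó $4$-dimensional adjunction inequality that can extract obstructions in indefinite $4$-manifolds invisible to the usual numerical Heegaard Floer invariants --- which is presumably why the author leaves the statement as a conjecture.
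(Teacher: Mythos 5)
This statement is Conjecture~\ref{conj:Immerse} in the paper: the author gives no proof of it and explicitly states that the methods of the paper break down once $\mathbb CP^2$ summands are allowed. So there is no paper proof to compare against, and your proposal must stand on its own as a proof of the conjecture --- which it does not, and you say so yourself: the ``mixed-sign regime'' you flag at the end is left unresolved, and you concede it likely requires a genuinely new ingredient. A proposal that ends by identifying an unhandled case it calls ``probably fatal'' is a research plan, not a proof.

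Beyond that, the reduction steps have concrete problems even before the fatal case. The hypothesis is $[G]=\iota_*[F]$, i.e.\ $[G]$ lies in the image of $\iota_*$, so the piece of $G$ inside each $\mathbb CP^2\#\overline{\mathbb CP^2}$ summand is already null-homologous there: in your notation $a=b=0$ (relative classes rel the $S^3$ neck are determined by absolute ones since $H_1(S^3)=0$). Hence there is no $aH$ component to blow down, and the Kronheimer--Mrowka bound for closed surfaces of nonzero degree in $\mathbb CP^2$ gives you nothing; the real issue is that $G$ can exploit the summands geometrically (e.g.\ to resolve double points) while remaining homologically trivial in them. Consequently the step ``excise the piece and replace it by a surface on the $\overline{\mathbb CP^2}$ side of comparable genus'' is precisely the missing genus control, not a consequence of anything you establish: there is no a priori reason the replacement keeps the genus below $g(F)$, and tubing with copies of $\mathbb CP^1$ moves $[G]$ out of the image of $\iota_*$ unless you correct the class, which again costs genus. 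The actual obstruction in the paper's approach sits in Lemma~\ref{lem:vhIm}: the containment $\mathrm{im}(v_k^+)_*\supset\mathrm{im}(h_k^+)_*$ is deduced from the adjunction relation (Theorem~\ref{thm:AdjRel}) together with the blow-up formula, using the Spin$^c$ structure $\mathfrak t_1$ on each $\overline{\mathbb CP^2}$ summand; with positive $\mathbb CP^2$ summands this bookkeeping fails, and your proposal does not supply a substitute. So the gap is genuine, and the statement remains, as in the paper, a conjecture.
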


If $K$ bounds an immersed surface in $Y$, then the immersed surface can be perturbed to an immersed surface with normal self-intersections in $Y\times[0,1]$. One can then resolve the self-intersections to get an embedded surface in $(Y\times[0,1])\#N(\mathbb CP^2\#\overline{\mathbb CP^2})$ for some $N$ so that the homology class of the embedded surface is in the image of $\iota_*$. Thus the condition in Conjecture~\ref{conj:Immerse} essentially says that the ``immersed genus'' of $K$ is smaller than the Seifert genus in the relative homology class. Similar to Theorem~\ref{thm:Slice}, one can hope that if the ``immersed genus'' of $K$ is zero, then $K$ has Property G.

\begin{conj}\label{conj:Nullhomo}
Null-homotopic knots have Property G.
\end{conj}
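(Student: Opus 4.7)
My approach is to reduce Conjecture~\ref{conj:Nullhomo} to Theorem~\ref{thm:Slice}. Given a null-homotopy $f\co D^2\to Y$ of $K$, I would first push the interior into $Y\times(0,1]$ and perturb generically to obtain a smooth immersion $\tilde f\co D^2\to Y\times[0,1]$ with $\tilde f(\partial D^2)=K\times\{0\}$ and only transverse double points. It then suffices to resolve $\tilde f$ to a properly embedded disk $G$ in some $(Y\times[0,1])\#N\overline{\mathbb{CP}^2}$ whose homology class lies in $\iota_*H_2(Y,K)$, because Theorem~\ref{thm:Slice} will then deliver Property G.

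Each transverse double point $p$ of $\tilde f$ has a well-defined sign $\epsilon(p)\in\{\pm 1\}$. Blowing up at $p$---that is, connect-summing with $\overline{\mathbb{CP}^2}$ and taking the proper transform in the blown-up $4$-manifold---separates the two sheets, so the resulting surface is still an embedded disk. A standard local computation, analogous to the proper-transform formula for a node in algebraic surface theory, shows that the $[E]$-component of the class of the new disk is zero when $\epsilon(p)=-1$, while $\epsilon(p)=+1$ forces it to equal $\pm 2$. Consequently, if $\tilde f$ can be modified so that all of its double points are negative, then iterated blow-ups produce an embedded disk in $(Y\times[0,1])\#k\overline{\mathbb{CP}^2}$ whose class lies in $\iota_*H_2(Y,K)$, completing the argument; by the orientation-reversal symmetry noted after Theorem~\ref{thm:Slice}, it is equally good if all double points can be made positive (using $\mathbb{CP}^2$ summands instead).

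The main obstacle is precisely this sign-adjustment step. In principle each pair of oppositely signed double points of $\tilde f$ can be removed by a Whitney move, provided an embedded Whitney disk exists whose boundary consists of two arcs on $\tilde f(D^2)$ joining the pair. In a non-simply-connected ambient $4$-manifold, however, such Whitney disks are obstructed by $\pi_1$ of the complement of $\tilde f(D^2)$ in $Y\times[0,1]$, and even when they exist they typically acquire secondary intersections with $\tilde f(D^2)$ whose elimination would re-create double points. Controlling these obstructions---perhaps by introducing extra $\overline{\mathbb{CP}^2}$ summands used to blow up secondary intersection points, and arguing that a careful accounting still yields the desired homology class---is the crux of the difficulty, and a general solution would presumably also yield the stronger Conjecture~\ref{conj:Immerse}. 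A complementary route worth pursuing is to re-examine the (presumably gauge-theoretic) proof of Theorem~\ref{thm:Slice} and determine whether it extends directly to immersed slice disks with double points of both signs, bypassing the sign-adjustment step altogether.
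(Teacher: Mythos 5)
You have not given a proof, and you should be aware that the paper does not contain one either: Conjecture~\ref{conj:Nullhomo} is stated as an open problem, and the only results the paper proves in its direction are Proposition~\ref{prop:b1>0} (no non-separating spheres when $b_1(Y)>0$) and Theorem~\ref{thm:Slice} (the case of an embedded slice disk with class in $\mathrm{im}\,\iota_*$). Your reduction step is essentially correct and is in fact already observed in the paper, in the remark preceding Conjecture~\ref{conj:Immerse}: pushing a null-homotopy into $Y\times[0,1]$ and perturbing gives an immersed disk with transverse double points, and a negative double point can be resolved inside a $\overline{\mathbb{CP}}^2$ summand with total exceptional class zero (equivalently, a positive one inside a ${\mathbb{CP}}^2$ summand), so \emph{if} all double points could be arranged to have a single sign, Theorem~\ref{thm:Slice} (together with the orientation-reversal remark after it) would indeed give Property G. Note, however, that a positive double point resolved in $\overline{\mathbb{CP}}^2$ necessarily contributes $E$-coefficient $\pm2$, which takes the class out of $\mathrm{im}\,\iota_*$ and makes Theorem~\ref{thm:Slice} inapplicable; this is exactly why mixed signs force summands of both $\mathbb{CP}^2$ and $\overline{\mathbb{CP}}^2$, the situation in which, as the paper says explicitly, the Heegaard Floer method (the adjunction relation of Theorem~\ref{thm:AdjRel} plus the mapping cone formula) breaks down.

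The genuine gap is the step you yourself flag: there is no known way to modify a generic null-homotopy so that all of its double points have one sign. Pairs of double points of opposite sign can only be cancelled by Whitney moves, which require embedded, framed Whitney disks whose interiors miss the immersed disk; in $Y\times[0,1]$ the fundamental group of the complement obstructs even finding the Whitney circles' null-homotopies, and the usual Casson/Freedman--Quinn machinery for pushing off secondary intersections is unavailable smoothly. Blowing up the secondary intersection points, as you suggest, reintroduces exceptional classes of both signs and lands you back in the $(Y\times[0,1])\#N(\mathbb{CP}^2\#\overline{\mathbb{CP}}^2)$ setting of Conjecture~\ref{conj:Immerse}, which is itself open. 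So what you have written is a plausible strategy for reducing Conjecture~\ref{conj:Nullhomo} to Theorem~\ref{thm:Slice} in a special geometric situation, not a proof; the sign-adjustment (equivalently, the mixed-sign extension of the surgery/adjunction argument) is not a technical detail to be filled in later but the entire content of the conjecture as it currently stands.
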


Boileau has conjectured that null-homotopic knots have Property G2 \cite[Problem~1.80]{Kirby}\footnote{
There are trivial counterexamples to the original (slightly different) question of Boileau. See the remark after \cite[Question~1.3]{NiFibredSurg}.
}. The question whether null-homotopic knots have Property G1 was asked to the author by David Gabai. 

A special case of Conjecture~\ref{conj:Nullhomo} is the following assertion: If $K\subset Y$ is a nontrivial null-homotopic knot, $F$ is a Seifert surface, then $[\widehat F]$  is not represented by a sphere. This special case was proved by Hom and Lidman \cite{HL} in the case when $Y$ is a prime rational homology sphere. In fact, using a result due to Gabai \cite{G2} and Lackenby \cite{Lackenby}, it is straightforward to prove the case $b_1(Y)>0$.

\begin{prop}\label{prop:b1>0}
Let $Y$ be a closed, oriented, connected $3$--manifold with $b_1(Y)>0$, and $K\subset Y$ be a nontrivial null-homotopic knot such that $Y-K$ is irreducible, then $Y_0(K)$ does not have an $S^1\times S^2$ connected summand.
\end{prop}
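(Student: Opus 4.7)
The plan is to show that $Y_0(K)$ is in fact irreducible, which in particular forbids an $S^1\times S^2$ summand. Set $M=Y\Bslash K$. Irreducibility of $Y-K$ gives irreducibility of $M$, and since $K$ is null-homotopic (hence null-homologous), $b_1(M)=b_1(Y)+1\ge 2$. Because $K$ is nontrivial, its Seifert genus is at least one, so one can pick a taut Seifert surface $F$ of genus $g\ge 1$; the class $[F]\in H_2(M,\partial M)$ is nonzero and has positive Thurston norm $2g-1$.

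By Gabai's foliation theorem \cite{G2}, $M$ admits a taut foliation $\mathcal F$ with $F$ as a leaf, transverse to $\partial M$ with boundary slope equal to the Seifert longitude of $K$. By Lackenby \cite{Lackenby}, $\mathcal F$ extends across the $0$-surgery solid torus---foliated by its meridional disks, whose boundaries match the $\partial F$-foliation on $\partial M$---to a taut foliation $\widehat{\mathcal F}$ on $Y_0(K)$.

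Finally, by a classical consequence of Novikov's theorem (together with Reeb stability and the sphere theorem), a closed oriented $3$-manifold carrying a taut foliation is either irreducible or homeomorphic to $S^1\times S^2$. Since $b_1(Y_0(K))=b_1(Y)+1\ge 2>1=b_1(S^1\times S^2)$, the second alternative is excluded, so $Y_0(K)$ is irreducible and in particular has no $S^1\times S^2$ summand. The main obstacle---which is precisely what the Gabai--Lackenby input handles---is ensuring that the extension over the surgery solid torus is genuinely taut (no Reeb component is created); once this is granted, the rest of the argument is formal.
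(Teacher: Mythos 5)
The crucial step in your argument --- that Gabai's machinery produces a taut foliation of $M=Y\Bslash K$ having the minimal genus Seifert surface $F$ as a leaf and meeting $\partial M$ in curves of the longitudinal slope, so that it caps off over the $0$--filling torus --- is not available in this generality, and neither of your citations supplies it. Gabai \cite{G2} is the ``all but one slope'' surgery theorem for a \emph{closed} taut surface in a manifold with torus boundary, not a construction of foliations with a Seifert-surface leaf; and Lackenby's theorem is a sutured-manifold/norm statement about Dehn fillings, not a device for extending a foliation across the surgery solid torus (if the boundary foliation really were by longitudes, capping off would be trivial and need no citation). The boundary-slope control is exactly the hard point: Gabai achieves it for knots in $S^3$ in \cite{G3}, and the paper's introduction records that the argument extends only under extra hypotheses (e.g.\ $Y-K$ irreducible and $H_1(Y)$ torsion-free). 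If your foliation existed for an arbitrary null-homotopic knot in an arbitrary $Y$, then $\widehat F$ would be a compact leaf of a taut foliation of $Y_0(K)$ and hence norm-minimizing, i.e.\ you would have proved Property G1 for all such knots --- essentially the conjecture this paper presents as open. A further warning sign is that your proof never uses the hypothesis that $K$ is null-homotopic (only null-homologous), and it claims full irreducibility of $Y_0(K)$, which is more than the proposition asserts.

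The paper's actual proof goes the other way around and uses the null-homotopy in an essential manner: since $b_1(Y)>0$, one can choose a \emph{closed} taut surface $S$ in the interior of $M$, and then apply Lackenby's Theorem~A.21 (a strengthening of the main theorem of \cite{G2}) to the Dehn fillings of $M$. Because the core of the $\infty$--filling, namely $K\subset Y=Y_\infty(K)$, is null-homotopic, the $\infty$ slope is the (unique possible) degenerate slope, and the theorem then guarantees that every $2$--sphere in the $0$--filling $Y_0(K)$ bounds a rational homology ball; in particular $Y_0(K)$ has no $S^1\times S^2$ summand. If you want to salvage a foliation-style argument, you would need to first establish the boundary-controlled foliation of $M$ (or tautness of $\widehat F$ in $Y_0(K)$) in this generality, which is precisely what is not known; as written, the proposal assumes the key open ingredient rather than proving the stated, weaker conclusion.
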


\begin{rem}
Hom and Lidman \cite{HL} also proved that if $b_1(Y)=0$ and $K\subset Y$ is a null-homologous knot, then $Y_0(K)$ is not homeomorphic to $Y\#S^1\times S^2$. Using the same argument as in their paper combined with Theorem~\ref{thm:ZeroMCTwisted}, we can remove the $b_1(Y)=0$ condition in their paper. In a later paper \cite{NiPropR}, the author removed the $b_1(Y)>0$ condition in Proposition~\ref{prop:b1>0}.
\end{rem}

Using the argument in our paper, it is not hard to prove Property G for null-homologous knots in many $3$--manifolds with ``simple'' Heegaard Floer homology. For example, knots in the Brieskorn sphere $\Sigma(2,3,7)$ have Property G.
We will only do this for torus bundles.

\begin{thm}\label{thm:TB}
Let $Y$ be a $T^2$--bundle over $S^1$, then
null-homologous knots in $Y$ have Property G.
\end{thm}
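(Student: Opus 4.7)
My plan is to adapt the Heegaard Floer template used in the proofs of Theorems~\ref{thm:4genus} and~\ref{thm:Slice}, but replace the input coming from a low-genus surface in $X=(Y\times[0,1])\#N\overline{\mathbb{CP}^2}$ with an explicit computation of Heegaard Floer homology for torus bundles. Both (G1) and (G2) of Definition~\ref{defn:PropG} should reduce, via the surgery exact triangle combined with the Ozsv\'ath--Szab\'o detection of the Thurston norm by $HF^+$ and the Ghiggini--Ni detection of fiberedness by $HFK$, to a non-vanishing statement for a distinguished class in the top Alexander/$\spinc$ grading of the (possibly twisted) Heegaard Floer homology of $Y_0(K)$.

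First I would recall the structure of $HF^\infty$ for $T^2$-bundles over $S^1$. These manifolds fall into the Euclidean, Nil, and Sol geometries, all of which have been treated in the earlier Heegaard Floer literature. In each case $\underline{HF}^\infty(Y,\mathfrak{s})$ with appropriately chosen twisted coefficients has ``standard'' form, being isomorphic to $\Lambda^*H^1(Y;\mathbb{Z})$ tensored with the coefficient ring on torsion $\spinc$ structures, with a similarly simple description otherwise. This is the precise sense in which torus bundles have ``simple'' Heegaard Floer homology, as alluded to in the remark preceding the theorem.

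Second, given a null-homologous $K\subset Y$ and a taut Seifert-like surface $F$ for $K$, I would show that the top-Alexander-grading generator of $\underline{HFK}^-(Y,K,[F])$ survives under the map into $\underline{HF}^+(Y_0(K),\widehat{\mathfrak{s}})$ coming from the knot surgery exact triangle. Using the explicit Floer groups from the first step, both sides can be identified directly and the map checked to be non-zero by an algebraic computation, paralleling the argument used in the proofs of the main theorems. This yields (G1) and (G2) with respect to $[F]$; varying $F$ over Seifert homology classes then gives Property G.

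The main technical obstacle will lie in the case $Y=T^3$, where $b_1(Y)=3$ forces the use of sufficiently generic twisted coefficients so that the relevant Floer groups become finite-dimensional and the $H_1$-action across the surgery triangle can be controlled in a way that preserves the distinguished top class. Once this bookkeeping is in place, the Nil and Sol cases should follow along the same lines with strictly smaller $b_1$, and the translation between non-vanishing in the twisted $HF^+$ and the Thurston norm/fiberedness conclusions is standard.
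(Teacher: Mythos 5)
You have the right general framework (twisted coefficients, the ``simple'' Floer homology of torus bundles, and the surgery formula), but the proposal is missing the one idea that actually makes the argument close. In the proofs of Theorems~\ref{thm:4genus} and~\ref{thm:Slice}, the rank inequality of Proposition~\ref{prop:ZeroTorsEqual} is not free: it requires the hypothesis $\mathrm{im}(v^+_{d-1})_*\supset\mathrm{im}(h^+_{d-1})_*$, which is supplied by Lemma~\ref{lem:vhIm}, i.e.\ by the adjunction relation applied to the low-genus surface in $(Y\times[0,1])\#N\overline{\mathbb CP^2}$. For an arbitrary null-homologous knot in a torus bundle there is no such surface, so ``an algebraic computation paralleling the argument used in the proofs of the main theorems'' does not exist as stated; you never say what replaces that input. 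The replacement in this setting is the Ai--Peters computation $\underline{HF^+}(Y;\Lambda)\cong\Lambda$ (with twisting by a class pairing nontrivially with the fiber, e.g.\ evaluation on $[\gamma]+[\mu]$): since the target $H_*(\underline{B^+})$ is one-dimensional over the field $\Lambda$, the map $(\underline{v^+_{d-1}})_*$ is either surjective or zero. In the surjective case the dimension count of Case~1 of Proposition~\ref{prop:ZeroTorsEqual} applies verbatim; in the zero case one uses (\ref{eq:vIm}) to get $(\underline{v^+_0})_*=0$, the symmetry between $v^+_0$ and $h^+_0$ to get $(\underline{h^+_0})_*=0$, and then (\ref{eq:hIm}) to get $(\underline{h^+_{d-1}})_*=0$, so the mapping cone of Theorem~\ref{thm:ZeroMCTwisted} splits as $\underline{A^+_{d-1}}\oplus\underline{B^+}$ and the inequality again holds. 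Without this dichotomy your step~2 is an assertion, not a proof.

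Two further points. First, ``the top-Alexander-grading generator survives'' into $\underline{HF}^+(Y_0(K))$ is only the G1 direction; for G2 you need an upper bound, namely $\mathrm{rank}_\Lambda\,\underline{\widehat{HFK}}(Y,K,\varphi,d)\le\mathrm{rank}_\Lambda\,\underline{HF^+}(Y_0(K),\widehat\varphi,d-1)$, so that fiberedness of $Y_0(K)$ (which forces the right-hand side to be one-dimensional) pins down $\widehat{HFK}$ at the top and lets you invoke Theorem~\ref{thm:KnotFibered}. Nonvanishing of the image of a single class does not give this; the rank inequality does, and it is exactly what the mapping-cone dichotomy above produces. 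Also, the relevant comparison is via the zero-surgery mapping cone (Theorem~\ref{thm:ZeroMCTwisted}) together with the triangle (\ref{eq:v(d-1)}), not a map from $\underline{HFK}^-$ through the surgery exact triangle. Second, the singled-out difficulty at $Y=T^3$ is a red herring: with the twisting fixed by $[\gamma]+[\mu]$, which evaluates nontrivially on the torus fiber, the result of \cite{AiPeters} applies uniformly to all torus bundles, and no genericity of coefficients is needed.
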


Given what is known about Property G, it is reasonable to expect the following conjecture to be true.

\begin{conj}
If $Y$ is a closed, oriented, connected, reducible $3$--manifold, $K\subset Y$ is a null-homologous knot with $Y\Bslash K$ being irreducible, then $K$ has Property G.
\end{conj}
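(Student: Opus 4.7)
The plan is to first invoke \cite{NiNSSphere} to dispose of the non--separating sphere case, so we may assume that every reducing sphere of $Y$ separates. Fix such a sphere $S$ giving $Y=Y_1\# Y_2$ with neither $Y_i$ equal to $S^3$. Irreducibility of $Y\Bslash K$ forces $S\cap K\neq\emptyset$ (otherwise $S$ would itself be a reducing sphere in $Y\Bslash K$), and since $K$ is null--homologous, $|S\cap K|=2k$ for some $k\ge 1$; we choose $S$ so that $k$ is minimal over all separating reducing spheres.

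In the base case $k=1$, the arcs $K\cap(Y_i\setminus B_i)$ close up via short arcs on $S$ to knots $K_i\subset Y_i$, exhibiting $K$ as the connected sum $K_1\# K_2$. The remark in the introduction that connected sums of knots have Property G then handles this case, and one proceeds by induction on the number of prime summands of $Y$. So the substantive case is $k\ge 2$. Here, using minimality of $k$ together with irreducibility of $Y\Bslash K$ and the absence of non--separating spheres, one can further isotope $S$ so that the $2k$--punctured sphere $P=S\setminus\ond(K)$ is essential in $Y\Bslash K$ (a compressing disk would surger $S$ into two separating spheres, at least one of which is a reducing sphere with strictly fewer intersections with $K$). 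Given a taut Seifert--like surface $F$ with $[F]=\varphi$, isotope $F$ so that $F\cap S$ consists of exactly $k$ disjoint arcs, killing any circle components using irreducibility of $Y\Bslash K$, and splitting $F$ into taut pieces $F_1,F_2$ in the two tangle exteriors on either side of $S$. The strategy is then to run Gabai's sutured manifold machinery on each side relative to $P$ to produce taut foliations of the two summand pieces extending a common foliation of $P$, and glue them along the closed--up surface $\widetilde S$ (a closed surface of genus $k-1$ inside $Y_0(K)$) to obtain a taut foliation on $Y_0(K)$ having $\widehat F$ as a compact leaf. This gives Property G1, and Property G2 follows by upgrading foliations to fibrations under the hypothesis that $Y_0(K)$ fibers with fiber class $\widehat\varphi$, then pulling the fibration back across $S$ to $Y\Bslash K$.

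The hardest step will be the matching in the gluing argument when $k\ge 2$: one must arrange the sutured manifold hierarchies on the two sides of $S$ to induce compatible transverse structures along $P$, so that the foliations (or fibrations) assemble coherently across $\widetilde S$ in $Y_0(K)$. This is precisely the delicate compatibility that in Gabai's original work required substantial extra analysis. A secondary difficulty is that the Heegaard Floer techniques of \cite{NiFibred,AiNi} underlying the L--space cases interact tensorially with connected sums, so a twisted--coefficient refinement --- in the spirit of the remark following Proposition~\ref{prop:b1>0} --- would likely be needed to detect Property G2 across the decomposition $Y=Y_1\# Y_2$ when the prime summands themselves have complicated Heegaard Floer homology.
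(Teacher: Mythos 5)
This statement is stated in the paper as a \emph{conjecture}; the paper offers no proof of it, so your proposal has to stand entirely on its own, and it does not. The decisive case is exactly the one you leave open: when every reducing sphere meets $K$, i.e.\ $k\ge 2$. Gabai's sutured manifold machinery constructs taut foliations on a taut sutured manifold with control only along the sutured boundary (annuli/tori); it gives no mechanism for prescribing the induced structure along an interior splitting surface such as your planar surface $P$ or the capped-off surface $\widetilde S$ of genus $k-1$, so that two hierarchies built independently in the two tangle exteriors match up. You acknowledge this yourself (``the hardest step''), but that step is the entire content of the conjecture in the $k\ge2$ case; as written the argument proves nothing beyond what is already covered by \cite{NiNSSphere} and the connected-sum remark. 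Several intermediate steps are also unjustified: a taut Seifert-like surface $F$ cannot in general be isotoped to meet $S$ in exactly $k$ arcs while splitting into pieces that are taut (or even norm-minimizing in the sutured sense) in the two tangle exteriors; and surgering $S$ along a compressing disk of $P$ produces two spheres meeting $K$ fewer times, but one of them may bound a ball in $Y$, so minimality of $k$ does not immediately make $P$ essential. Finally, the passage from taut foliations to Property G2 (``upgrading foliations to fibrations and pulling back across $S$'') is asserted, not argued.

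The reduction and induction are also shakier than you suggest. In the $k=1$ case $K=K_1\#K_2$ with $K_i\subset Y_i$, and the $Y_i$ may be irreducible manifolds in which Property G for the summand knots is itself unknown; so if the paper's remark about connected sums is read as a reduction to the summands, your induction has no valid base, and if it is read as unconditional, then no induction on prime summands is taking place at all --- either way this needs to be pinned down. Note also that your plan abandons the paper's actual framework (the mapping cone formulas of Theorems~\ref{thm:ZeroMC} and~\ref{thm:ZeroMCTwisted} together with inputs like Lemma~\ref{lem:vhIm}); within that framework the missing ingredient is an analogue of Lemma~\ref{lem:vhIm} that does not require the $4$--genus hypothesis, and your proposal does not supply a substitute for it. In short, the proposal is a program with the central difficulty left open, not a proof.
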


This paper is organized as follows. In Section~\ref{sect:Prelim}, we recall some basic material about Heegaard Floer homology.
In Section~\ref{sect:ZeroMC}, we state and prove the general case of the zero surgery formula of Heegaard Floer homology from \cite{OSzIntSurg}.
In Section~\ref{sect:ZeroSurg}, we use a standard argument to prove a rank inequality relating
$\widehat{HFK}(Y,K)$ and $HF^+(Y_0(K))$. Our main results follow from this rank inequality. We will also prove Proposition~\ref{prop:b1>0}. In Section~\ref{sect:TB}, we prove Theorem~\ref{thm:TB}. In the Appendix, we sketch a proof of the folklore Theorem~\ref{thm:OriginalMC}, which has been frequently cited in the literature.

\

\noindent{\bf Acknowledgements.} This
research was funded by NSF grant numbers DMS-1252992 and DMS-1811900. We are grateful to the referee for the detailed comments which helped to improve the exposition of this paper.


\section{Preliminaries on Heegaard Floer homology}\label{sect:Prelim}

In this section, we will collect some results we need on Heegaard Floer homology.

\subsection{Different versions of Heegaard Floer homology}

Heegaard Floer homology \cite{OSzAnn1}, in its most fundamental form, assigns a package of invariants
$$\widehat{HF}, HF^+, HF^-, HF^{\infty}, HF_{\mathrm{red}}$$
to a closed, connected, oriented $3$--manifold $Y$ equipped with a Spin$^c$ structure $\mathfrak s\in\spinc(Y)$.
We often use $HF^{\circ}$ to denote these invariants, where $\circ$ is one of $\hat{\ },+,-,\infty,\mathrm{red}$.

There is a short exact sequence of chain complexes
$$
\xymatrix{
0\ar[r] &CF^-(Y,\mathfrak s)\ar[r]^-{\eta} &CF^{\infty}(Y,\mathfrak s)\ar[r]^{\rho} &CF^+(Y,\mathfrak s)\ar[r] &0,
}
$$
which induces an exact triangle relating $HF^+, HF^-, HF^{\infty}$. 
When $c_1(\mathfrak s)$ is nontorsion, as shown in \cite[Corollary~2.4]{OSzSympl}, $\rho_*=0$. Hence we have a natural short exact sequence
\begin{equation}\label{eq:SES+-inf}
\xymatrix{
0\ar[r] &HF^+(Y,\mathfrak s)\ar[r] &HF^-(Y,\mathfrak s)\ar[r]^-{\eta_*} &HF^{\infty}(Y,\mathfrak s)\ar[r] &0.
}
\end{equation}


\subsection{The cobordism map}

Suppose that $W\co Y_1\to Y_2$ is an oriented cobordism, with $Y_1,Y_2$ connected. Given $\mathfrak S\in\spinc(W)$, there is a homomorphism
$$F^{\circ}_{W,\mathfrak S}\co HF^{\circ}(Y_1,\mathfrak S|Y_1)\to HF^{\circ}(Y_2,\mathfrak S|Y_2),$$
induced by a chain map
$$f^{\circ}_{W,\mathfrak S}\co CF^{\circ}(Y_1,\mathfrak S|Y_1)\to CF^{\circ}(Y_2,\mathfrak S|Y_2).$$

Suppose that $W$ is the composition of two cobordisms $$W_1\co Y_1\to Y_3,\quad W_2\co Y_3\to Y_2.$$ Given $\mathfrak S_1\in\spinc(W_1),\mathfrak S_2\in\spinc(W_2)$ with $\mathfrak S_1|Y_3=\mathfrak S_2|Y_3$, we have the composition law \cite[Theorem~3.4]{OSzFour}
\begin{equation}\label{eq:CobCompos}
F^{\circ}_{W_2,\mathfrak S_2}\circ F^{\circ}_{W_1,\mathfrak S_1}=\sum_{\mathfrak S\in\spinc(W),\mathfrak S|W_i=\mathfrak S_i}F^{\circ}_{W,\mathfrak S}.
\end{equation}

The composition law becomes simpler in the following special case.

\begin{lem}\label{lem:SpincInj}
Suppose that $W_1$ is obtained from $Y_1\times[0,1]$ by adding $b$ one-handles to $Y_1\times1$ and then $1$ two-handle, where the attaching curve of the two-handle is a null-homologous knot in $Y_1\#b(S^1\times S^2)$,
and the framing is nonzero. Then for any $\mathfrak S\in\spinc(W)$, we have
$$
F^{\circ}_{W_2,\mathfrak S|W_2}\circ F^{\circ}_{W_1,\mathfrak S|W_1}=F^{\circ}_{W,\mathfrak S}.
$$ 
\end{lem}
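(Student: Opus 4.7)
The plan is to apply the composition law \eqref{eq:CobCompos} to the pair $\mathfrak S_i := \mathfrak S|W_i$. This reduces the statement to showing that $\mathfrak S$ is the \emph{unique} spin$^c$ structure on $W$ restricting to $\mathfrak S_i$ on each $W_i$, since then the sum on the right-hand side of \eqref{eq:CobCompos} collapses to the single term $F^{\circ}_{W,\mathfrak S}$. The set of such extensions is a torsor for the kernel of the restriction map $H^2(W;\mathbb Z)\to H^2(W_1;\mathbb Z)\oplus H^2(W_2;\mathbb Z)$. By the Mayer--Vietoris sequence for $W = W_1\cup_{Y_3}W_2$,
$$H^1(W_1)\oplus H^1(W_2)\to H^1(Y_3)\to H^2(W)\to H^2(W_1)\oplus H^2(W_2),$$
this kernel is the cokernel of the first map, so it suffices to check that $H^1(W_1)\to H^1(Y_3)$ alone is surjective. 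Using $H^1 = \mathrm{Hom}(H_1,\mathbb Z)$ (from universal coefficients), this is equivalent to showing that the inclusion-induced map $H_1(Y_3)\to H_1(W_1)$ is surjective with torsion kernel.

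I will verify this by a direct computation. Write $Y' = Y_1\#b(S^1\times S^2)$, so $H_1(Y') = H_1(Y_1)\oplus\mathbb Z^b$. Since the 2-handle of $W_1$ is attached along a null-homologous knot $K'\subset Y'$, the boundary in the handle chain complex vanishes on the 1-handle generators, giving $H_1(W_1) = H_1(Y')$, with the inclusion $Y'\hookrightarrow W_1$ realizing the identification. On the other hand, $Y_3$ is integer surgery on $K'$ with nonzero framing $n$; Mayer--Vietoris for $Y' = (Y'\Bslash K')\cup\nu(K')$ shows $H_1(Y'\Bslash K') = H_1(Y')\oplus\mathbb Z\langle\mu\rangle$ (using that $K'$ is null-homologous and hence bounds a Seifert surface disjoint from $\mu$), and the surgery adds the relation $n\mu + \lambda = 0$, i.e.\ $n\mu = 0$, yielding $H_1(Y_3)\cong H_1(Y')\oplus\mathbb Z/n$. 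Under $Y_3\hookrightarrow W_1$ the meridian $\mu$ bounds in $\nu(K')\subset W_1$, while the $H_1(Y')$ summand maps identically to $H_1(W_1)$, so the induced map is surjective with kernel $\mathbb Z/n$, as needed.

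The main thing to check carefully is the two $H_1$ computations and the identification of the inclusion maps; both hypotheses are used essentially. Null-homologicity of $K'$ ensures the 2-handle does not kill any element of $H_1$, so $Y'\hookrightarrow W_1$ is an $H_1$-isomorphism, and it also makes $\mu$ split off a free $\mathbb Z$ summand in $H_1(Y'\Bslash K')$; the nonzero framing keeps the kernel of $H_1(Y_3)\to H_1(W_1)$ torsion rather than contributing a free $\mathbb Z$ summand. Without either hypothesis, additional extensions of $(\mathfrak S_1,\mathfrak S_2)$ could contribute to the composition.
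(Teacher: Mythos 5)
Your argument is correct, and its overall skeleton coincides with the paper's: invoke the composition law, reduce the lemma to injectivity of the restriction map $\spinc(W)\to\spinc(W_1)\times\spinc(W_2)$, and use Mayer--Vietoris for $W=W_1\cup_{Y_3}W_2$ to reduce that to surjectivity of $H^1(W_1)\to H^1(Y_3)$. The only genuine divergence is how that surjectivity is established. The paper stays in cohomology: it takes the long exact sequence of the pair $(W_1,Y_3)$ and uses Poincar\'e--Lefschetz duality to identify $H^2(W_1,Y_3)\to H^2(W_1)$ with $H_2(W_1,Y_1)\cong\mathbb Z\to H_2(W_1,\partial W_1)$, whose injectivity (where both the null-homology of the attaching curve and the nonzero framing enter) forces $H^1(W_1)\to H^1(Y_3)$ to be onto. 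You instead dualize via universal coefficients and compute $H_1(W_1)$ and $H_1(Y_3)$ explicitly by handle/Mayer--Vietoris arguments, showing $H_1(Y_3)\to H_1(W_1)$ is surjective with kernel generated by the meridian, hence torsion by the nonzero framing; your computations are correct and use the two hypotheses in the same essential way. The paper's route is shorter and avoids choosing splittings; yours is more elementary and makes visible exactly which classes could obstruct uniqueness of the extension. One small wording caveat: surjectivity of $H^1(W_1)\to H^1(Y_3)$ is \emph{implied by} (not equivalent to) surjectivity with torsion kernel of $H_1(Y_3)\to H_1(W_1)$; since you prove the latter, this does not affect the proof.
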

\begin{proof}
Consider the exact sequence $$
\xymatrix{
H^1(W_1)\ar[r]^-{\eta^*} &H^1(Y_3)\ar[r] &H^2(W_1,Y_3)\ar[r]^-{\rho^*}&H^2(W_1).
}
$$
By the Poincar\'e--Lefschetz duality, the map $\rho^*$ can be identified with $$H_2(W_1,Y_1)(\cong\mathbb Z)\to H_2(W_1,\partial W_1),$$
which is injective by our condition on $W_1$. Hence $\eta^*$ is surjective.

Now consider the exact sequence $$
\xymatrix{
H^1(W_1)\oplus H^1(W_2)\ar[r]^-{\iota^*} &H^1(Y_3)\ar[r]&H^2(W)\ar[r]^-{\pi^*}&H^2(W_1)\oplus H^2(W_2).
}
$$
As $\eta^*$ is surjective, $\iota^*$ is also surjective, so $\pi^*$ is injective, hence the restriction map
$$\spinc(W)\to\spinc(W_1)\times\spinc(W_2)$$ is injective. Our conclusion then follows from (\ref{eq:CobCompos}).
\end{proof}

The following adjunction relation proved in \cite{OSzSympl} plays an important role in our paper.

\begin{thm}\label{thm:AdjRel}
For every genus $g$, there is an element $\zeta\in \mathbb Z[U]\otimes_\mathbb Z\Lambda^*H_1(\Sigma)$ of degree $2g$ with the following property. Given any smooth, oriented, $4$--dimensional cobordism $W$ from $Y_1$ to $Y_2$ (both of which are connected $3$--manifolds), any smoothly embedded, connected, oriented submanifold $\Sigma\subset W$ of genus $g$, and any $\mathfrak S\in\spinc(W)$ satisfying that
\[
\langle c_1(\mathfrak S),[\Sigma]\rangle-[\Sigma]\cdot [\Sigma]=-2g(\Sigma), 
\]
we have the relation
\[
F^{\circ}_{W,\mathfrak S}=F^{\circ}_{W,\mathfrak S+\epsilon\mathrm{PD}[\Sigma]}(i_*(\zeta)),
\]
where $\epsilon$ is the sign of $\langle c_1(\mathfrak S),[\Sigma]\rangle$, and \[i_*\co \mathbb Z[U]\otimes_{\mathbb Z}\Lambda^*H_1(\Sigma)\to \mathbb Z[U]\otimes_{\mathbb Z}\Lambda^*(H_1(W)/\mathrm{Tors})\] is the map induced by the inclusion $i\co \Sigma\to W$.
\end{thm}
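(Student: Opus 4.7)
The plan is to prove the adjunction relation by isolating a tubular neighborhood of $\Sigma$, reducing the cobordism map computation to a local model, and then passing from the general genus case down to the sphere case where the identity is already known.

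First I would decompose $W = W_0 \cup_N \nu(\Sigma)$, where $\nu(\Sigma)$ is the disk bundle over $\Sigma$ with Euler number $n = [\Sigma]\cdot[\Sigma]$ and $N$ is the corresponding circle bundle. By the composition law (\ref{eq:CobCompos}), the map $F^\circ_{W,\mathfrak S}$ factors as a composition involving the cobordism map of the local piece $\nu(\Sigma)$, and the proposed element $\zeta$ should arise entirely from this local contribution. Thus the universality statement reduces to computing the disk-bundle cobordism map for each pair $(g,n)$ and each $\mathrm{Spin}^c$ structure on $\nu(\Sigma)$ satisfying the extremal pairing $\langle c_1(\mathfrak S),[\Sigma]\rangle-[\Sigma]\cdot[\Sigma]=-2g$.

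Next I would reduce to the case $g=0$ by compressing $\Sigma$ to a sphere $S$ along a symplectic basis of curves for $H_1(\Sigma)$. After stabilizing $W$ by $g$ copies of $S^1\times S^3$ (which does not alter the cobordism map) the compressing disks exist, and the $H_1(\Sigma)$-classes carried by those curves are exactly the ones that, via the standard $H_1(W)/\mathrm{Tors}$-action on $HF^\circ$, will contribute the $\Lambda^*H_1(\Sigma)$-factor in $\zeta$. The resulting sphere $S$ inherits $[S]=[\Sigma]$ and the extremal pairing $\langle c_1(\mathfrak S),[S]\rangle=[S]\cdot[S]$, putting us on the boundary of the adjunction inequality for spheres.

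For the sphere base case, the identity $F^\circ_{W,\mathfrak S}=U^k\cdot F^\circ_{W,\mathfrak S+\epsilon\mathrm{PD}[S]}$ should follow from a direct model calculation in the disk bundle over $S^2$ of Euler number $n$, combined with the blow-up/blow-down formula and the sphere adjunction identity established in earlier Ozsv\'ath--Szab\'o work; the power of $U$ records the degree shift needed to match the two $\mathrm{Spin}^c$ structures. The hardest part of the program will be establishing that the resulting $\zeta$ is \emph{universal} --- depending only on $g$ and not on the ambient $(W,\mathfrak S)$ or on the particular embedding of $\Sigma$. I expect this to follow from a TQFT-style naturality argument using invariance of cobordism maps under diffeomorphisms of $\nu(\Sigma)$ relative to $\partial\nu(\Sigma)$, but making this work simultaneously for all flavors $\circ\in\{\hat{\ },+,-,\infty\}$ (particularly when $c_1(\mathfrak S)$ becomes nontorsion and one must work with the truncated exact sequence~(\ref{eq:SES+-inf})) will require careful bookkeeping of the $U$-filtration.
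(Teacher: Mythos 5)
First, note that the paper does not prove Theorem~\ref{thm:AdjRel} at all: it is quoted from Ozsv\'ath--Szab\'o \cite{OSzSympl} (their adjunction relation, Theorem~3.1 there), so the comparison must be with that proof, whose model computation the present paper in fact reruns in the proof of Proposition~\ref{prop:FactorU}. Your first step (split off $\nu(\Sigma)$ and localize via the composition law) is consistent with that strategy, but your second step contains a genuine gap that the argument cannot survive. You cannot ``compress $\Sigma$ to a sphere along a symplectic basis of curves for $H_1(\Sigma)$'': those curves need not bound embedded disks in $W$ meeting $\Sigma$ only in their boundary, and stabilizing by $S^1\times S^3$ summands does not create such disks. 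Worse, connected sum with $S^1\times S^3$ does change the cobordism map --- it is a $1$--handle followed by the cancelling $3$--handle, and the composite of those two maps is zero, so the stabilized map carries no information. Even if an ambient compression existed, it would prove too much: lowering the genus of $\Sigma$ while keeping $[\Sigma]$ and the extremal condition would contradict exactly the genus bounds this circle of results encodes, and for the resulting sphere with $\langle c_1(\mathfrak S),[S]\rangle=[S]\cdot[S]$ there is no known identity of the asserted type except the blow-up formula for $(-1)$--spheres. Finally, the element $\zeta$ has degree $2g$ and lives in $\mathbb Z[U]\otimes\Lambda^*H_1(\Sigma)$; its $\Lambda^*H_1(\Sigma)$ content is intrinsically a genus--$g$ phenomenon and cannot be recovered from a genus--$0$ computation (after a compression the relevant $H_1$--classes would die in $H_1(W)$, so the $H_1$--action you invoke would act trivially).

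The actual proof is a direct computation in the genus--$g$ local model, not a reduction to spheres: one takes the punctured neighborhood $N=\nu(\Sigma)\Bslash B^4$, viewed as a cobordism from $S^3$ to $V_{-n}(B_g)$ where $B_g$ is the genus--$g$ Borromean knot in $V=\#^{2g}(S^1\times S^2)$ and $n=[\Sigma]\cdot[\Sigma]$, and uses the structure $HFK^{\infty}(V,B_g)\cong\mathbb Z[U,U^{-1}]\otimes\Lambda^*(\mathbb Z^{2g})$ to compare the maps $F^{\circ}_{N,\mathfrak S|N}$ and $F^{\circ}_{N,(\mathfrak S+\epsilon\mathrm{PD}[\Sigma])|N}$; their difference is multiplication by a universal degree--$2g$ element built from the $U$-- and $H_1$--actions, which is the $\zeta$ of the statement. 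One then splices this local computation back into $W$ using the $HF^{\le0}$ pairing and the commutative square~(\ref{eq:le0Comm}), together with the composition law~(\ref{eq:CobCompos}) and a $\spinc$--injectivity argument in the spirit of Lemma~\ref{lem:SpincInj} to isolate the relevant term. If you want to salvage your outline, replace the compression step by this Borromean-knot model computation; the naturality/universality worry you raise at the end is then handled because $\zeta$ is defined once and for all in the model and transported by $i_*$, with the nontorsion case for $HF^+$ recovered from $HF^-$ via the exact sequence~(\ref{eq:SES+-inf}) exactly as in the paper's proof of Proposition~\ref{prop:FactorU}.
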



\subsection{The knot Floer chain complex}\label{Subsect:KnotFloer}

In this subsection, we will briefly recall the construction of the knot Floer chain complex from \cite{OSzKnot}. (See also \cite{RasThesis}.) We will then discuss its relationship with large Dehn surgery.

Suppose that $K$ is a null-homologous knot in $Y$. As in \cite{OSzLink}, any relative Spin$^c$ structure $\xi$ on $X=Y\Bslash K$ is represented by a vector field $v$ on $X$ whose restriction to $\partial X$ is a canonical boundary parallel vector field.  Let $c_1(\xi)\in H^2(Y,K)\cong H^2(X,\partial X)$ be the obstruction to extending the boundary-induced trivialization of $v^{\bot}|(\partial X)$ to a trivialization of $v^{\bot}$ over $X$.

Suppose that
$$(\Sigma,\mbox{\boldmath${\alpha}$},\mbox{\boldmath${\beta}$},w,z)$$
is a doubly pointed Heegaard diagram for  $(Y,K)$.
Given $\mathfrak s\in\spinc(Y)$ and $\xi\in\relspinc(Y,K)$ such that $\mathfrak s$ extends $\xi$, let $CFK^{\infty}(Y,K,\xi)$ be the knot Floer chain complex of $(Y,K,\mathfrak s)$. It is generated by $[\mathbf x,i,j]$ satisfying
\begin{equation}\label{eq:ijfiltration}
\underline{\mathfrak s}_{w,z}(\mathbf x)+(i-j)\mathrm{PD}[\mu]=\xi,
\end{equation}
where
$\mathbf x\in \mathbb T_{\alpha}\cap \mathbb T_{\beta}$. The differential $\partial$ is given by
$$\partial[\mathbf x,i,j]=\sum_{\mathbf y\in \mathbb T_{\alpha}\cap \mathbb T_{\beta}}\sum_{\phi\in\pi_2(\mathbf x,\mathbf y),\mu(\phi)=1}\#\widehat{\mathcal M}(\phi)[\mathbf y,i-n_w(\phi),j-n_z(\phi)].$$
The pair $(i,j)$ defines a double filtration on $CFK^{\infty}(Y,K,\xi)$.
Define the knot Floer homology
$$\widehat{HFK}(Y,K,\xi)=H_*(CFK^{\infty}(Y,K,\xi)\{i=j=0\}).$$

Given a homology class $\varphi\in H_2(Y,K)$ represented by a Seifert surface $F$ for $K$,
let $\widehat{\varphi}\in H_2(Y_0(K))$ be the homology class which extends $\varphi$.
Given $\mathfrak s\in\spinc(Y)$ and $k\in\mathbb Z$, let $\xi_k\in\relspinc(Y,K)$ be the extension of $\mathfrak s$ so that 
\begin{equation}\label{eq:xi_k}
\langle c_1(\xi_k),{\varphi}\rangle=2k+1,
\end{equation}
and let $\mathfrak t_k\in\spinc(Y_0(K))$ be the Spin$^c$ structure satisfying that $\mathfrak s|(Y\Bslash K)=\mathfrak t_k|(Y\Bslash K)$ and
$$\langle c_1(\mathfrak t_k),\widehat{\varphi}\rangle=2k.$$
Denote
$$C=CFK^{\infty}(Y,K,\mathfrak s,\varphi):=CFK^{\infty}(Y,K,\xi_0).$$
Note that the $(i,j)$--filtration on $C$ depends on the homology class $\varphi$ through (\ref{eq:ijfiltration}), because the definition of $\xi_0$ (which is (\ref{eq:xi_k})) involves $\varphi$. For simplicity, we often suppress $\varphi$ in this paper.
Let
\begin{eqnarray*}
\widehat{HFK}(Y,K,\mathfrak s,\varphi,k)&=&\widehat{HFK}(Y,K,\xi_k),\\
\widehat{HFK}(Y,K,\varphi,k)&=&\bigoplus_{\mathfrak s\in\spinc(Y)}\widehat{HFK}(Y,K,\mathfrak s,\varphi,k).
\end{eqnarray*}

\begin{rem}
We often abuse the notation by letting
\[
C=CFK^{\infty}(Y,K,\varphi)=\bigoplus_{\mathfrak s\in\spinc(Y)}CFK^{\infty}(Y,K,\mathfrak s,\varphi)
\]
if a Spin$^c$ structure $\mathfrak s\in\spinc(Y)$ is not explicitly mentioned in the context.
\end{rem}

There are chain complexes
$$A^+_k=C\{i\ge0 \text{ or }j\ge k\},\quad k\in\mathbb Z$$
and $B^+=C\{i\ge0\}\cong CF^+(Y,\mathfrak s)$. As in \cite{OSzIntSurg}, there are chain maps
$$v^+_k,h^+_k\co A^+_k\to B^+.$$
Here $v^+_k$ is the vertical projection, and $h^+_k$ first projects $A^+_k$ to $C\{j\ge k\}$, then maps to $C\{j\ge0\}$ via $U^k$, finally maps to $B^+$ via a chain homotopy equivalence $C\{j\ge0\}\to C\{i\ge0\}$.

\begin{rem}\label{rem:canonical}
The chain homotopy equivalence $C\{j\ge0\}\to C\{i\ge0\}$ is obtained by changing the basepoint from $z$ to $w$. In \cite{OSzIntSurg}, Ozsv\'ath and Szab\'o said that the chain homotopy equivalence is canonical up to sign. This assertion is justified by \cite[Theorem~2.1]{OSzFour} and \cite{Gartner}. (If we use $\mathbb Z/2\mathbb Z$ coefficients, which are enough for the applications in our paper, we can also refer to \cite{JTZ}.)
\end{rem}

There is a commutative diagram
$$
\xymatrix{
A^+_{k+1}\ar[rd]^-{v^+_{k+1}}&\\
A^+_{k}\ar[r]^-{v^+_{k}}\ar[u]&B^+,
}
$$
where the vertical arrow denotes the natural quotient map. Let $$(v^+_k)_*,(h^+_k)_*\co H_*(A^+_k)\to H_*(B^+)$$ be the induced map on homology, then the previous commutative diagram implies that
\begin{equation}\label{eq:vIm}
\mathrm{im}(v^+_k)_*\subset \mathrm{im}(v^+_{k+1})_*.
\end{equation}
Similarly,
\begin{equation}\label{eq:hIm}
\mathrm{im}(h^+_k)_*\supset \mathrm{im}(h^+_{k+1})_*.
\end{equation}

The following theorem is contained in \cite{OSzKnot} and \cite{OSzIntSurg}.
Let $F$ be a Seifert surface for $K$ in the homology class $\varphi$. Given $n>0$ and $t\in\mathbb Z/n\mathbb Z$, let $W'_n(K)\co Y_n(K)\to Y$ be the natural two-handle cobordism, and let $\widehat F_n\subset W'_n(K)$ be the closed surface obtained by capping off $\partial F$ with a disk. Let $\widehat{\varphi}_n=[\widehat F_n]\in H_2(W'_n(K))$.
Let $k$ be an integer satisfying $k\equiv t\pmod n$ and $|k|\le\frac n2$.
Let $\mathfrak x_k,\mathfrak y_k\in\spinc(W'_n(K))$ satisfy $\mathfrak x_k|Y=\mathfrak y_k|Y=\mathfrak s$ and
\begin{equation}\label{eq:xkyk}
\langle c_1(\mathfrak x_k),\widehat{\varphi}_n\rangle=2k-n\qquad\langle c_1(\mathfrak y_k),\widehat{\varphi}_n\rangle=2k+n,
\end{equation}
and let $\mathfrak s_t=\mathfrak x_k|Y_n(K)=\mathfrak y_k|Y_n(K)$.

\begin{thm}\label{thm:LargeSurg}
When $n\ge2g(F)$, the chain complex
$CF^+(Y_n(K),\mathfrak s_t)$ is represented by the chain complex $A^+_k$, in the sense that there is an isomorphism $$\Psi^+_{n}\co CF^+(Y_n(K),\mathfrak s_t)\to A^+_k.$$
Moreover, the following squares commute:
$$
\xymatrixcolsep{3pc}\xymatrix{
CF^+(Y_n(K),\mathfrak s_t)\ar[r]^-{f^+_{W'_n(K),\mathfrak x_k}}\ar[d]^-{\Psi^+_{n}}&CF^+(Y,\mathfrak s)\ar[d]^-{=}\\
A^+_k \ar[r]^-{v^+_k} &B^+, 
}\quad
\xymatrix{
CF^+(Y_n(K),\mathfrak s_t)\ar[r]^-{f^+_{W'_n(K),\mathfrak y_k}}\ar[d]^-{\Psi^+_{n}}&CF^+(Y,\mathfrak s)\ar[d]^-{=}\\
A^+_k \ar[r]^-{h^+_k} &B^+.
}
$$
\end{thm}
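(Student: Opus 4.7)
The plan is to follow the standard large surgery scheme of \cite{OSzKnot,OSzIntSurg}, keeping careful track of the relative Spin$^c$ information so that the two commutative squares in the theorem drop out of the triangle count.

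I would begin with a doubly pointed Heegaard diagram $(\Sigma,\mbox{\boldmath${\alpha}$},\mbox{\boldmath${\beta}$},w,z)$ for $(Y,K)$ in which $\beta_g$ is a small meridian of $K$ separating $w$ from $z$. After winding $\alpha_g$ many times around $\beta_g$ and replacing $\beta_g$ by a curve $\gamma$ realizing the slope $n$ with respect to the Seifert framing, the new diagram $(\Sigma,\mbox{\boldmath${\alpha}$},\mbox{\boldmath${\beta}$}',w)$ describes $Y_n(K)$. Intersection points of the wound $\mathbb{T}_\alpha$ with the new $\mathbb{T}_{\beta'}$ that lie near the winding region are indexed by pairs $(\mathbf{x},i)$ with $\mathbf{x}\in\mathbb{T}_\alpha\cap\mathbb{T}_\beta$ and $i\in\mathbb{Z}$. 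Computing $\underline{\mathfrak{s}}_w$ on these generators and matching with (\ref{eq:xkyk}) and the definition of $\xi_k$, the $\mathfrak{s}_t$-component is generated precisely by those $(\mathbf{x},i)$ for which $\underline{\mathfrak{s}}_{w,z}(\mathbf{x})+(i-j)\mathrm{PD}[\mu]=\xi_k$ for a corresponding $j\in\mathbb{Z}$. This yields a bijection between generators of $CF^+(Y_n(K),\mathfrak{s}_t)$ and those of $A^+_k=C\{i\ge 0\text{ or }j\ge k\}$, and is the candidate for $\Psi^+_n$.

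Next I would show that for $n\ge 2g(F)$ the differentials actually match. The hypothesis $n\ge 2g(F)$ is exactly what an area/winding estimate needs so that each holomorphic Whitney disk in the wound diagram with $\mu(\phi)=1$ corresponds to a unique disk in the original doubly pointed diagram, together with well-defined multiplicities $(n_w,n_z)$ and no ``wrap-around'' in the $(i,j)$-bigrading; the disjunctive condition $\{i\ge 0\text{ or }j\ge k\}$ in $A^+_k$ then reflects whether the disk escapes through $w$ or through $z$. This is the step I expect to be the main technical obstacle, since one has to leverage the large surgery slope to rule out stray holomorphic representatives precisely enough that the identification is compatible with the adjunction formula for $\xi_k$.

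Finally, for the cobordism maps I would pass to a Heegaard triple $(\Sigma,\mbox{\boldmath${\alpha}$},\mbox{\boldmath${\beta}$}',\mbox{\boldmath${\beta}$},w,z)$ representing $W'_n(K)$, where the $\mbox{\boldmath${\beta}$}'\mbox{\boldmath${\beta}$}$ picture is a standard diagram for $\#^{g-1}(S^1\times S^2)$ and the top generator $\Theta$ sits in the appropriate Spin$^c$ summand. Using (\ref{eq:xkyk}) and the fact that $\mathfrak{y}_k=\mathfrak{x}_k+\mathrm{PD}[\widehat{F}_n]$, I would separate holomorphic triangles by the restriction of their Spin$^c$ class to $W'_n(K)$: triangles restricting to $\mathfrak{x}_k$ can be arranged to avoid $z$, so their count lands in $B^+=C\{i\ge 0\}$ and reproduces the vertical projection $v^+_k$; triangles restricting to $\mathfrak{y}_k$ carry the $w$-multiplicity needed to absorb the difference $\mathrm{PD}[\widehat{F}_n]$, so their count factors through the projection $A^+_k\to C\{j\ge k\}$, multiplication by $U^k$ into $C\{j\ge 0\}$, and the canonical basepoint-change equivalence of Remark~\ref{rem:canonical}, reproducing $h^+_k$. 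The two commutative squares in the theorem follow directly from this triangle count, so no further work is needed once these identifications are established.
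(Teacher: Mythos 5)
A point of calibration first: the paper does not prove Theorem~\ref{thm:LargeSurg} at all --- it is quoted as being ``contained in'' \cite{OSzKnot} and \cite{OSzIntSurg} --- so there is no internal proof to compare against. Your outline is essentially the standard argument of those references: identify the generators of $CF^+(Y_n(K),\mathfrak s_t)$ with those of $A^+_k$ via the winding-region (nearest point) correspondence together with the relative Spin$^c$/Alexander bookkeeping, and then identify the maps induced by $W'_n(K)$ in the Spin$^c$ classes $\mathfrak x_k$ and $\mathfrak y_k$ with $v^+_k$ and $h^+_k$ by counting the small holomorphic triangles in a triple whose $(\mbox{\boldmath${\beta}$}',\mbox{\boldmath${\beta}$})$-part is $\#^{g-1}(S^1\times S^2)$. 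So as a roadmap the proposal is the right one.

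Two caveats, one cosmetic and one substantive. Cosmetically, the winding that produces the lattice of intersection points is carried by the new surgery curve (the juxtaposition of $n$ copies of the meridian $\beta_g$ with a longitude), not by $\alpha_g$; winding $\alpha$-curves is a different device (used for admissibility and twisted coefficients), and near each $\mathbf x\in\mathbb T_\alpha\cap\mathbb T_\beta$ one gets $n$ (not $\mathbb Z$-many) points, exactly one of which lies in $\mathfrak s_t$. Substantively, the two steps you explicitly defer are precisely the content of the theorem: (i) showing that for $n\ge 2g(F)$ every index-one disk for the surgered diagram corresponds to a disk for the doubly pointed diagram compatibly with the truncation $\{i\ge0\text{ or }j\ge k\}$ is the heart of \cite[Section~4]{OSzKnot} and \cite[Section~4]{OSzIntSurg}, and is not a routine area estimate; and (ii) in the triangle count for $\mathfrak y_k$, the identification with $h^+_k$ produces the factor $U^k$ together with the basepoint-change equivalence $C\{j\ge0\}\to C\{i\ge0\}$, which is canonical only up to sign and chain homotopy (Remark~\ref{rem:canonical}); what one actually proves is commutativity of the second square up to chain homotopy, or on the nose after choosing that equivalence to be the one realized by the triangle count --- which is all that is used later in the paper. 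As written, then, your proposal is a correct sketch of the cited proof rather than a self-contained argument.
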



\subsection{The Thurston norm of $Y\Bslash K$ and $Y_0(K)$}\label{subsect:ThurstonNorm}

In this subsection, we will recall a few facts about the Thurston norm \cite{Thurston} and Heegaard Floer homology.

\begin{defn}
Let $S$ be a compact oriented surface with connected components
$S_1,\dots,S_n$. We define
$$\chi_-(S)=\sum_i\max\{0,-\chi(S_i)\}.$$
Let $M$ be a compact oriented 3--manifold, $A$ be a compact
codimension--0 submanifold of $\partial M$. Let
$h\in H_2(M,A)$. The {\it Thurston norm} $\chi_-(h)$ of $h$ is
defined to be the minimal value of $\chi_-(S)$, where $S$ runs over all
the properly embedded surfaces in $M$ with $\partial S\subset A$,
and $[S]=h$.
\end{defn}

The following properties of Heegaard Floer homology are well-known, see \cite{OSzGenus,Gh,NiFibred,NiNormCos,NiIntHier}.

\begin{thm}\label{thm:ThurstonNorm}
Suppose that $Y$ is a closed 3--manifold.
Let $G$ be a taut surface in $Y$. Then
$$
\chi_-(G)=\max\left\{\langle c_1(\mathfrak s),[G]\rangle\left| \mathfrak s\in \spinc(Y),\:HF^+(Y,\mathfrak s)\ne0\right.\right\}
$$
\end{thm}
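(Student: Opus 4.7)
The plan is to split the equality into two inequalities and prove each separately. The adjunction direction, $\langle c_1(\mathfrak s),[G]\rangle\leq\chi_-(G)$ for every $\mathfrak s$ with $HF^+(Y,\mathfrak s)\neq 0$, will follow from the Ozsv\'ath--Szab\'o adjunction inequality of \cite{OSzGenus}; the detection direction, producing a particular $\mathfrak s_0$ realizing the maximum, will require sutured manifold theory.

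For the upper bound, I would apply the adjunction inequality componentwise to $G=G_1\sqcup\cdots\sqcup G_n$. Each $G_i$ of genus $\geq 1$ satisfies $|\langle c_1(\mathfrak s),[G_i]\rangle|\leq 2g(G_i)-2=\chi_-(G_i)$ by \cite{OSzGenus}. On sphere components both sides vanish: $\chi_-(S^2)=0$, and nonvanishing of $HF^+(Y,\mathfrak s)$ forces $c_1(\mathfrak s)$ to pair trivially with any homologically essential $2$--sphere class. The conditions in the definition of tautness---no proper null-homologous subsurface and no non-sphere component representing a sphere class---rule out degenerate configurations and ensure that each $G_i$ is homologically essential enough for the bound to apply without sign cancellation between components. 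Orienting each $G_i$ compatibly with $G$ and summing then gives $\langle c_1(\mathfrak s),[G]\rangle\leq\chi_-(G)$.

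For the lower bound, I would cut $Y$ open along $G$ to obtain a sutured manifold $(M,\gamma)$, with sutures prescribed by the orientation of $G$. The paper's definition of tautness is tailored precisely so that $(M,\gamma)$ is a taut sutured manifold in the sense of Gabai; by Gabai's theorem \cite{G3} there is then a taut sutured manifold hierarchy ending in a product (or a disjoint union of balls), for which the sutured Floer homology is manifestly nonzero. Juh\'asz's surface decomposition formula propagates nontriviality of $SFH$ up the hierarchy while keeping track of relative Spin$^c$ decorations, as exploited in \cite{NiFibred,NiNormCos,NiIntHier}. At the top this produces a relative Spin$^c$ structure on $(M,\gamma)$ whose Chern class pairs with the relative class of $G$ to give $\chi_-(G)$; identifying this with an element of $HF^+(Y,\mathfrak s)$ for the corresponding extension $\mathfrak s\in\spinc(Y)$ yields the desired Spin$^c$ structure satisfying $\langle c_1(\mathfrak s),[G]\rangle=\chi_-(G)$.

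The main obstacle is the detection direction. The adjunction bound is essentially a packaging of \cite{OSzGenus}, requiring care only in the disconnected case. The detection direction rests on Gabai's hierarchy combined with the $SFH$ decomposition formula, and the principal subtlety lies in matching relative Spin$^c$ decorations through each surface decomposition step and in lifting the surviving sutured class to $HF^+$ of the closed manifold $Y$. The link between $SFH$ of the complement of a closed surface and $HF^+$ of the ambient manifold is by now standard, but the Spin$^c$ bookkeeping must still be done carefully in order to pin down the extension $\mathfrak s$ that achieves the sharp evaluation $\chi_-(G)$.
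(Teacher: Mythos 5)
The paper does not actually prove Theorem~\ref{thm:ThurstonNorm}: it is stated as a known result with a pointer to \cite{OSzGenus,Gh,NiFibred,NiNormCos,NiIntHier}, so there is no internal argument to compare yours against. Measured against that literature, your outline has the right overall shape. The inequality $\langle c_1(\mathfrak s),[G]\rangle\le\chi_-(G)$ is indeed just the adjunction inequality applied componentwise, with homologically essential sphere components handled by the $S^1\times S^2$ summand they produce (and the norm-zero case of the detection direction is immediate from torsion Spin$^c$ structures), so that half is fine.

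The detection direction, however, contains a genuine gap rather than merely deferred bookkeeping. After running Gabai's hierarchy on $Y$ cut along $G$ and propagating nonvanishing of sutured Floer homology back up through Juh\'asz's decomposition theorem (which itself requires arranging well-groomed/nice decomposing surfaces), you still must convert nonvanishing of $SFH$ of the complement of the closed surface $G$ into nonvanishing of $HF^+(Y,\mathfrak s)$ for some $\mathfrak s$ with $\langle c_1(\mathfrak s),[G]\rangle=\chi_-(G)$. There is no off-the-shelf isomorphism between $SFH$ of $Y$ cut along a closed surface and the extremal Spin$^c$ summands of $HF^+(Y)$; building exactly that bridge, with untwisted (or field) coefficients, is the nontrivial content of \cite{NiNormCos,NiIntHier} (and of \cite{Gh,NiFibred} in the fibered analogues), typically by passing through an auxiliary knot or a hierarchy argument internal to $Y$ rather than by a single ``standard'' comparison. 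Your instinct to avoid the classical route (taut foliation through $G$, Eliashberg--Thurston perturbation, Ozsv\'ath--Szab\'o nonvanishing for weak semi-fillings) is sound, since that route only yields nonvanishing of twisted $HF^+$, and nonvanishing of the twisted group does not formally imply nonvanishing of the untwisted one; but the step you label as standard is precisely the step that has to be supplied for the theorem as stated.
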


\begin{thm}\label{thm:SeifertNorm}
Suppose that $K$ is a null-homologous knot in a closed 3--manifold $Y$.
Let $F$ be a taut Seifert-like surface for $K$. Then
$$\chi_-(F)+2=\max\left\{\langle c_1(\xi),[F]\rangle\left|\xi\in\relspinc(Y\Bslash K), \widehat{HFK}(Y,K,\xi)\ne0\right.\right\}.$$
\end{thm}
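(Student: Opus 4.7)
This is a relative version of Theorem~\ref{thm:ThurstonNorm}: the top Alexander grading supporting nonzero $\widehat{HFK}$ detects the Thurston norm of a Seifert-like surface, with the ``$+2$'' arising from the boundary component of $F$. My plan is to prove the two inequalities separately.

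For the upper bound $\max\langle c_1(\xi),[F]\rangle\le\chi_-(F)+2$, I would pass to a large $n$-surgery via Theorem~\ref{thm:LargeSurg}. Any extremal $k$ with $\widehat{HFK}(Y,K,\xi_k)\ne0$, where $\langle c_1(\xi_k),[F]\rangle=2k+1$, propagates through the filtered identification $CF^+(Y_n(K),\mathfrak s_t)\cong A_k^+$ to nonvanishing information in $HF^+(Y_n(K),\mathfrak s_t)\cong H_*(A_k^+)$ that is not already present in $HF^+(Y_n(K),\mathfrak s_{t+1})\cong H_*(A_{k+1}^+)$. Applying the adjunction inequality (the $\le$ direction of Theorem~\ref{thm:ThurstonNorm}) in $Y_n(K)$ to a closed surface $\Sigma\subset Y_n(K)$ assembled from $n$ parallel copies of $F$ capped off by meridional disks of the filling solid torus, one obtains $\langle c_1(\mathfrak s_t),[\Sigma]\rangle\le\chi_-(\Sigma)$. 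The left-hand side is roughly $n(2k+1)$ plus boundary corrections, while $\chi_-(\Sigma)\le n\chi_-(F)+O(1)$, so dividing by $n$ and letting $n\to\infty$ yields $2k+1\le\chi_-(F)+2$.

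For the lower bound $\chi_-(F)+2\le\max\langle c_1(\xi),[F]\rangle$, I would first handle the closed components of $F$ directly via Theorem~\ref{thm:ThurstonNorm} in $Y$, reducing to showing $\widehat{HFK}(Y,K,\xi_g)\ne0$ for $g=g(F_0)$ where $F_0\subset F$ is the connected component with $\partial F_0=K$. Gabai's theorem \cite{G3} yields a taut foliation on $Y\Bslash K$ having $F_0$ as a compact leaf; it extends across the $0$-framed filling to a taut foliation of $Y_0(K)$ with $\widehat F_0$ a compact leaf, and the Ozsv\'ath--Szab\'o non-vanishing theorem for taut foliations delivers $HF^+(Y_0(K),\mathfrak t_{g-1})\ne0$. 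The remaining step is to identify $HF^+(Y_0(K),\mathfrak t_{g-1})$ with the top Alexander-grading summand $\widehat{HFK}(Y,K,\xi_g)$ via the zero surgery formula developed in Section~\ref{sect:ZeroMC}, after which the desired nonvanishing transfers. The main obstacle is precisely this last identification, which bridges the ``$+2$'' gap: $HF^+(Y_0(K))$ is nonvanishing only for $|2k|\le\chi_-(F)-1$, yet $\widehat{HFK}$ must be detected at $|2k+1|=\chi_-(F)+2$. This is the analogue of the extremal identification from \cite{OSzGenus} in the general $3$-manifold setting, and it requires careful bookkeeping with relative $\spinc$ structures when $b_1(Y)>0$, handled one $\mathfrak s\in\spinc(Y)$ at a time.
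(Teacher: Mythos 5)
The paper does not actually prove this statement: it is quoted as well-known, with references to \cite{OSzGenus,Gh,NiFibred,NiNormCos,NiIntHier}. So your proposal has to be judged against the literature, and as written it has real gaps in both directions.

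For the upper bound, the closed surface $\Sigma$ you want does not exist. In $Y_n(K)$ with $n\neq 0$, $n$ parallel copies of $\partial F$ represent $n\lambda$ on $\partial\nu(K)$, while a meridian disk of the filling solid torus bounds the slope $n\mu+\lambda$; since $n\lambda$ is not a multiple of $n\mu+\lambda$, the copies of $F$ cannot be capped off. Homologically, $[F]$ has no closed extension in $Y_n(K)$ at all (for $Y$ a rational homology sphere, $H_2(Y_n(K))=0$). Even granting some surface, your asymptotics would give $2k+1\le\chi_-(F)+o(1)$, i.e.\ $2k+1\le\chi_-(F)$, which is \emph{stronger} than the true bound and is violated by the extremal $\xi$ itself; the ``$+2$'' is a boundary correction that a divide-by-$n$ limit necessarily destroys. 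The actual proof of the vanishing $\widehat{HFK}(Y,K,\xi)=0$ for $\langle c_1(\xi),[F]\rangle>\chi_-(F)+2$ is not a limiting adjunction argument: it is a chain-level statement proved either with a Heegaard diagram adapted to $F$ on which every generator has bounded Alexander grading (\cite[Theorem~5.1]{OSzKnot}), or via surface decomposition in sutured Floer homology.

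For the lower bound your architecture (Gabai's foliation, Eliashberg--Thurston, the non-vanishing theorem for fillable contact structures, then transfer to $\widehat{HFK}$) is exactly that of \cite{OSzGenus}, and it can be pushed through when $F$ is connected of genus at least $2$ and $Y\Bslash K$ is irreducible. But the points you defer are precisely the hard ones. (i) The final identification is not an isomorphism of groups: $\widehat{HFK}(Y,K,\xi_g)$ is the cone of $v^+_{g-1}$ while $HF^+(Y_0(K),\mathfrak t_{g-1})$ is the cone of $v^+_{g-1}+h^+_{g-1}$, and passing non-vanishing from one to the other requires the grading-shift argument of Case~3 of Proposition~\ref{prop:ZeroTorsEqual} (or a rank count when $c_1(\mathfrak s)$ is nontorsion); this genuinely fails when $\chi_-(F)=1$, where the extremal $\mathrm{Spin}^c$ structure on $Y_0(K)$ is torsion, $HF^+$ is automatically nonzero there, and one needs the separate genus-one arguments of \cite{Gh,NiFibred} or sutured Floer homology. (ii) For disconnected $F$ you cannot treat the closed components independently via Theorem~\ref{thm:ThurstonNorm}: you must produce a single relative $\mathrm{Spin}^c$ structure that is simultaneously extremal on the component containing $\partial F$ and on the closed part. (iii) Reducibility of $Y\Bslash K$ obstructs Gabai's construction and must be handled separately. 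In the generality stated (arbitrary closed $Y$, taut Seifert-like surfaces), the clean proof is the sutured one --- Gabai's hierarchies plus Juh\'asz's decomposition theorem --- which is what the cited papers \cite{NiNormCos,NiIntHier} use and which bypasses both the foliation and the zero-surgery formula.
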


\begin{thm}\label{thm:KnotFibered}
Suppose that $K$ is a null-homologous knot in a 3--manifold $Y$ with a genus
$g$ Seifert surface $F$, and $Y\Bslash K$ is irreducible. If
$$\widehat{HFK}(Y,K,[F],g(F))\cong\mathbb Z,$$ then $K$ is fibered with fiber $F$.
\end{thm}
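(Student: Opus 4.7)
The plan is to reduce the statement to a detection result for product sutured manifolds via sutured Floer homology, following the approach of Ghiggini and Ni.

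First I would equip $M = Y \Bslash K$ with the sutured manifold structure $(M,\gamma)$ in which $\gamma$ consists of two oppositely oriented meridians on $\partial \nu(K)$. Because $Y\Bslash K$ is irreducible and $F$ is a taut Seifert surface, the balanced sutured manifold $(M,\gamma)$ is taut. I would then decompose $(M,\gamma)$ along $F$ to obtain a balanced sutured manifold $(M',\gamma')$, which is again taut because $F$ is a taut Seifert surface.

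The next ingredient is Juh\'asz's identification of sutured Floer homology with the top Alexander--graded piece of knot Floer homology for the sutured manifold obtained by cutting along a genus--minimizing Seifert surface. Applying this gives
\[
SFH(M',\gamma') \;\cong\; \widehat{HFK}(Y,K,[F],g(F)) \;\cong\; \mathbb Z.
\]
Now I invoke the sutured--Floer detection theorem that a taut balanced sutured manifold $(N,\delta)$ with $SFH(N,\delta)\cong\mathbb Z$ must be a product sutured manifold $R\times[0,1]$. This result is proved by running a Gabai sutured manifold hierarchy that stays taut at each stage, using the fact (via Juh\'asz's decomposition formula) that $SFH$ has rank one throughout the hierarchy, and then concluding the terminal sutured manifold is a product.

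Applying the detection theorem to $(M',\gamma')$ forces $(M',\gamma')\cong F\times[0,1]$. Regluing along the two copies of $F$ recovers $(M,\gamma)$ as a mapping torus of a self--homeomorphism of $F$, which is precisely the statement that $Y\Bslash K$ fibers over $S^1$ with fiber $F$, i.e., $K$ is fibered with fiber $F$. The main technical obstacle in this approach is ensuring that the sutured Floer homology decomposition formula applies correctly in this generality (so that the ``top piece'' of $\widehat{HFK}$ really computes $SFH$ of the cut--open complement) and that the product--detection theorem is available for arbitrary ambient $Y$ rather than just $S^3$; both are by now standard in the literature, so the argument goes through essentially verbatim.
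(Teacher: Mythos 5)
The paper does not prove this theorem at all: it is quoted as a well-known result with references to Ghiggini and Ni (see \cite{Gh,NiFibred}), and your sketch is essentially the argument from those sources in its modern sutured-Floer packaging due to Juh\'asz --- decompose $Y\Bslash K$ along $F$, identify $SFH$ of the cut-open manifold with $\widehat{HFK}(Y,K,[F],g(F))$, and invoke the theorem that a taut balanced sutured manifold with $SFH\cong\mathbb Z$ is a product. One point to tighten: the theorem does not assume $F$ is taut, so you cannot simply assert that $(M,\gamma)$ and $(M',\gamma')$ are taut ``because $F$ is a taut Seifert surface''; instead, tautness of the cut-open manifold must be deduced from the hypotheses, e.g.\ from irreducibility of $Y\Bslash K$ together with the nonvanishing $SFH\cong\widehat{HFK}(Y,K,[F],g(F))\cong\mathbb Z$ via Juh\'asz's characterization of tautness by nonvanishing of $SFH$. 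Note also that the product-detection statement genuinely needs tautness (which includes irreducibility of the sutured manifold): without it, connect-summing a product sutured manifold with an integer homology sphere having $\widehat{HF}\cong\mathbb Z$ gives rank one without being a product, which is exactly why the irreducibility hypothesis on $Y\Bslash K$ appears in the statement.
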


The above three theorems hold true even when we use field coefficients.


\section{Computing $HF^+(Y_0(K))$ via mapping cone}\label{sect:ZeroMC}

 In this section, we present detailed proofs for the general case of two theorems mentioned in \cite[Subsection~4.8]{OSzIntSurg}. The original theorems of Ozsv\'ath and Szab\'o are the (untwisted and twisted) zero surgery formula for knots in integral homology three-spheres. It is not hard to adapt their argument to prove the theorems for null-homologous knots in $3$--manifolds with torsion Spin$^c$ structures. See \cite[Section~2]{LR} for a detailed proof of the twisted formula. Our contribution here is to generalize the theorems to the case of non-torsion Spin$^c$ structures.

\begin{thm}\label{thm:ZeroMC}
Let $Y,K,C$ be as in Subsection~\ref{Subsect:KnotFloer}.
For any $k\in \mathbb Z$,
$HF^+(Y_0(K),\mathfrak t_{k})$ is isomorphic to the homology of the mapping cone of $$v^+_{k}+h^+_{k}\co A^+_{k}\to B^+.$$
\end{thm}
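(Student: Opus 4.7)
The plan is to deduce the mapping cone formula from the integer surgery exact triangle together with the large surgery formula of Theorem~\ref{thm:LargeSurg}. Fix $k\in\mathbb Z$ and choose an integer $n$ large enough that both $n\ge 2g(F)$ and $|k|\le n/2$ hold. The strategy is to refine the surgery triangle relating $Y,Y_0(K),Y_n(K)$ to the Spin$^c$ summand of $HF^+(Y_0(K))$ determined by $\mathfrak t_k$, and then to use Theorem~\ref{thm:LargeSurg} to identify the refined cobordism map with $v^+_k+h^+_k$.

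First I would invoke the integer surgery exact triangle
\[
\cdots\to HF^+(Y_n(K))\xrightarrow{F} HF^+(Y)\to HF^+(Y_0(K))\to HF^+(Y_n(K))\to\cdots,
\]
where $F$ is the sum of cobordism maps $F^+_{W'_n(K),\mathfrak S}$ over all $\mathfrak S\in\spinc(W'_n(K))$. Next I would restrict to the $\mathfrak t_k$ summand of $HF^+(Y_0(K))$. Via the natural triple cobordism and the affine identification of Spin$^c$ structures, $\mathfrak t_k$ singles out the summand $\mathfrak s_t$ of $HF^+(Y_n(K))$ with $t\equiv k\pmod n$, the summand $\mathfrak s$ of $HF^+(Y)$, and precisely the two Spin$^c$ structures $\mathfrak x_k,\mathfrak y_k$ on $W'_n(K)$ defined by (\ref{eq:xkyk}) as the only contributions to the restriction of $F$; all other Spin$^c$ structures on $W'_n(K)$ restricting to $\mathfrak s$ and $\mathfrak s_t$ correspond to other Spin$^c$ structures on $Y_0(K)$.

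After this refinement the triangle reads
\[
\cdots\to HF^+(Y_n(K),\mathfrak s_t)\xrightarrow{F^+_{W'_n(K),\mathfrak x_k}+F^+_{W'_n(K),\mathfrak y_k}} HF^+(Y,\mathfrak s)\to HF^+(Y_0(K),\mathfrak t_k)\to\cdots.
\]
Applying Theorem~\ref{thm:LargeSurg} converts the leftmost group into $H_*(A^+_k)$, keeps the middle as $H_*(B^+)$, and identifies the horizontal arrow with $(v^+_k+h^+_k)_*$. Interpreting the resulting long exact sequence as a mapping cone yields the claimed isomorphism, and independence of the result from the choice of $n$ follows automatically since the left-hand side of the conclusion does not mention $n$.

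The main obstacle will be the Spin$^c$ bookkeeping that underlies the refinement: one must verify that the correspondence $\spinc(Y_0(K))\to\spinc(W'_n(K))$ coming from the triple cobordism places $\mathfrak t_k$ over exactly $\{\mathfrak x_k,\mathfrak y_k\}$, and that every other Spin$^c$ structure on $W'_n(K)$ restricting to $(\mathfrak s,\mathfrak s_t)$ actually belongs to a different summand of $HF^+(Y_0(K))$ and so does not contribute to the $\mathfrak t_k$-part of $F$. A secondary subtlety is the case when $c_1(\mathfrak t_k)$ is nontorsion: there one should either run the same argument for the $HF^-$ flavor and transfer back via the short exact sequence~(\ref{eq:SES+-inf}), or note that the refinement is a chain-level statement and goes through verbatim. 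This is essentially the ``general case'' argument alluded to in \cite[Subsection~4.8]{OSzIntSurg}.
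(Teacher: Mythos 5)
There is a genuine gap, and it sits exactly at the point you flagged as ``Spin$^c$ bookkeeping.'' Your refinement claims that among the Spin$^c$ structures on $W'_n(K)$ restricting to $(\mathfrak s_t,\mathfrak s)$, only $\mathfrak x_k$ and $\mathfrak y_k$ contribute to the $\mathfrak t_k$--summand, the others ``belonging to different summands of $HF^+(Y_0(K))$.'' This is false. The Spin$^c$ structures on $W'_n(K)$ that restrict to the pair $(\mathfrak s_t,\mathfrak s)$ form the infinite family $\mathcal X_k=\{\mathfrak x_k+l\,\mathrm{PD}[\widehat F_n]:l\in\mathbb Z\}$, and the triangle-counting argument of \cite[Section~9]{OSzAnn2} shows that the summand of the surgery exact sequence corresponding to the class $[\mathfrak t_k]$ involves the map $f^+_3=\sum_{\mathfrak x\in\mathcal X_k}f^+_{W'_n(K),\mathfrak x}$, i.e.\ \emph{all} of these Spin$^c$ structures appear in the same summand; restriction to the boundary does not separate them, and Spin$^c$ structures on $W'_n(K)$ do not restrict to $Y_0(K)$ at all, so there is no bookkeeping statement of the kind you assert. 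The actual content of the proof is showing that the contributions of $\mathcal X_k\setminus\{\mathfrak x_k,\mathfrak y_k\}$ \emph{vanish} for $n$ large. When $c_1(\mathfrak s)$ is torsion this is done on the truncated complexes $CF^{\delta}$ using the absolute $\mathbb Q$--grading as in \cite[Lemma~4.4]{OSzIntSurg}, followed by a limiting argument to recover the $+$ version; when $c_1(\mathfrak s)$ is nontorsion there is no such grading, and one needs the new input of Proposition~\ref{prop:FactorU} (a Borromean-knot model computation in the spirit of \cite[Theorem~3.1]{OSzSympl}) showing that $F^+_{W'_n(K),\mathfrak x}$ factors through a large power of $U$, combined with the finiteness of $H_*(A^+_k)$. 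Your suggested fix for the nontorsion case (work with $HF^-$ and use (\ref{eq:SES+-inf}), or ``chain-level refinement'') does not address this, since the obstruction is the absence of the grading argument, not the choice of flavor.

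A secondary issue: even granting the refinement, passing from the long exact sequence with connecting map $(v^+_k+h^+_k)_*$ to the statement of the theorem only pins down $HF^+(Y_0(K),\mathfrak t_k)$ up to an extension, not up to isomorphism (let alone as a $\mathbb Z[U]$--module). The paper instead works with an explicit chain-level quasi-isomorphism $\psi=(f^+_2,H)$ onto the mapping cone, where $H$ is the null-homotopy of $f^+_3\circ f^+_2$ coming from the Heegaard quadruple; some argument of this kind is needed to get the isomorphism asserted in Theorem~\ref{thm:ZeroMC}.
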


\begin{rem}
Using the same argument as in the proof of Theorem~\ref{thm:ZeroMC}, one should be able to extend the Dehn surgery formula in \cite{OSzIntSurg,OSzRatSurg} to null-homologous knots in any closed oriented $3$--manifolds.
\end{rem}

Ozsv\'ath and Szab\'o \cite{OSzAnn2} defined the universal twisted Heegaard Floer chain complex $\underline{CF^{\circ}}(Y)$ and the corresponding homology $\underline{HF^{\circ}}(Y)$ as modules over the group ring $\mathbb Z[H^1(Y)]$.
Suppose that $K\subset Y$ is a null-homologous knot. Let $[\mu]$ be the homology class of a meridian of $K$ in $Y_0(K)$, then the evaluation over $[\mu]$ defines a ring homomorphism $\mathbb Z[H^1(Y)]\to \mathbb Z[\mathbb Z]=\mathbb Z[T,T^{-1}]$. Thus we get the twisted Heegaard Floer chain complex $\underline{CF^{\circ}}(Y_0(K);\mathbb Z[T,T^{-1}])$ as a module over $\mathbb Z[T,T^{-1}]$. Its homology is denoted $\underline{HF^{\circ}}(Y_0(K);\mathbb Z[T,T^{-1}])$.

\begin{thm}\label{thm:ZeroMCTwisted}
Let $Y,K,C$ be as in Subsection~\ref{Subsect:KnotFloer}.
For any $k\in \mathbb Z$, the twisted Heegaard Floer homology
$\underline{HF^+}(Y_0(K),\mathfrak t_{k};\mathbb Z[T,T^{-1}])$ is isomorphic to the homology of the mapping cone of $$v^+_{k}+Th^+_{k}\co A^+_{k}[T,T^{-1}]\to B^+[T,T^{-1}].$$
\end{thm}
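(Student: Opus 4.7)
My plan is to mimic the proof of Theorem~\ref{thm:ZeroMC} throughout, replacing every chain complex by its tensor product with $\mathbb Z[T,T^{-1}]$ and keeping careful track of the universal twist associated with the meridian $\mu$ of $K$. The essential observation at the outset is that $\mu$ is null-homologous in $Y$ and, for every $n\neq 0$, in $Y_n(K)$; hence the pullback of the universal twisted complexes of these manifolds along the evaluation $\mathbb Z[H^1(\cdot)]\to\mathbb Z[T,T^{-1}]$ factors through the augmentation. In particular Theorem~\ref{thm:LargeSurg} upgrades verbatim to an isomorphism $\underline{CF^+}(Y_n(K),\mathfrak s_t;\mathbb Z[T,T^{-1}])\cong A^+_k[T,T^{-1}]$ for $n\ge 2g(F)$, and the cobordism maps associated to the Spin$^c$ structures $\mathfrak x_k,\mathfrak y_k$ on $W'_n(K)$ land in $B^+[T,T^{-1}]$ without acquiring any twist.

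I would then feed these identifications into the twisted surgery exact triangle of \cite{OSzAnn2} relating $Y$, $Y_0(K)$ and $Y_n(K)$, where the only nontrivial twist occurs on $Y_0(K)$. Combining the triangle with the large surgery identification and the composition law (\ref{eq:CobCompos}), exactly as in the proof of Theorem~\ref{thm:ZeroMC}, produces a presentation of $\underline{HF^+}(Y_0(K),\mathfrak t_k;\mathbb Z[T,T^{-1}])$ as the homology of a mapping cone $A^+_k[T,T^{-1}]\to B^+[T,T^{-1}]$. The only change from the untwisted case is that one of the two cobordism maps making up the cone is multiplied by a monomial in $T$.

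To identify this monomial, I would argue as follows. The difference $\mathfrak y_k-\mathfrak x_k=n\,\mathrm{PD}[\widehat F_n]$ is supported on the closed surface $\widehat F_n\subset W'_n(K)$ obtained by capping off $\partial F$ with the core of the two-handle. When $W'_n(K)$ is re-composed with the cancelling two-handle cobordism between $Y$ and $Y_0(K)$ that appears in the triangle, this cap contributes a single algebraic intersection with any cycle representing $[\mu]$ in $Y_0(K)$. By the definition of the twisted cobordism map this intersection produces exactly one multiplicative factor of $T$ along the $\mathfrak y_k$-branch, which is the branch the untwisted theory identifies with $h^+_k$, whereas the $\mathfrak x_k$-branch (identified with $v^+_k$) picks up no twist. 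Letting $n\to\infty$ as in Theorem~\ref{thm:ZeroMC} then produces the mapping cone of $v^+_k+T\,h^+_k$, as required.

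The main obstacle will be pinning down, with correct sign and conventions, that the extra twist is $T$ rather than $T^{-1}$ and that it is attached to $h^+_k$ rather than $v^+_k$. This requires a coefficient-system-consistent trivialisation of the twisted local system over the relevant cobordisms, and a careful comparison of relative orientations for $\widehat F_n$ and $\mu$; these conventions interlock with the choice of canonical chain homotopy equivalence discussed in Remark~\ref{rem:canonical}. Once all of this bookkeeping is fixed, the remainder of the argument is a paragraph-by-paragraph transcription of the proof of Theorem~\ref{thm:ZeroMC}, with no new analytic or combinatorial content required.
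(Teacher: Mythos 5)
Your overall route is the same as the paper's: work over $\mathbb Z[T,T^{-1}]$, use the twisted exact sequence of \cite{OSzAnn2} in which only $Y_0(K)$ carries a nontrivial twist, identify $CF^+(Y_n(K),\mathfrak s_t)$ with $A^+_k$ via Theorem~\ref{thm:LargeSurg}, and then repeat the argument of Theorem~\ref{thm:ZeroMC} (gradings in the torsion case, Proposition~\ref{prop:FactorU} in the nontorsion case) to kill all Spin$^c$ structures in $\mathcal X_k$ other than $\mathfrak x_k,\mathfrak y_k$. The gap is that the one step which is genuinely new relative to Theorem~\ref{thm:ZeroMC} --- determining which monomial in $T$ decorates each branch of the cone --- is not actually carried out: you offer a heuristic intersection count and then explicitly defer the determination of $T$ versus $T^{-1}$, and of $h^+_k$ versus $v^+_k$, to unspecified ``bookkeeping.'' Since that identification \emph{is} the content of the theorem beyond Theorem~\ref{thm:ZeroMC}, the proposal as written is incomplete. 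The paper sidesteps this entirely by quoting the precise twisted mapping-cone statement \cite[Theorem~9.23]{OSzAnn2}, in which the map is $\sum_{\mathfrak x\in\mathcal X_k}T^{\frac1n\langle \mathfrak x-\mathfrak x_k,\widehat{\varphi}_n\rangle}f^+_{W'_n(K),\mathfrak x}$; since $\langle c_1(\mathfrak y_k)-c_1(\mathfrak x_k),\widehat{\varphi}_n\rangle=2n$ gives $\langle \mathfrak y_k-\mathfrak x_k,\widehat{\varphi}_n\rangle=n$, the $\mathfrak y_k$--branch (which Theorem~\ref{thm:LargeSurg} identifies with $h^+_k$) acquires exactly $T$, with no further convention-chasing.

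Two further points. First, your formula $\mathfrak y_k-\mathfrak x_k=n\,\mathrm{PD}[\widehat F_n]$ is wrong: with the paper's conventions (see the proof of Proposition~\ref{prop:FactorU}, where $\langle \mathrm{PD}[\widehat F_n],\widehat{\varphi}_n\rangle=-n$) one has $\mathfrak y_k=\mathfrak x_k-\mathrm{PD}[\widehat F_n]$. Second, note that only the \emph{ratio} of the monomials on the two branches is meaningful, since multiplying the whole map by a unit of $\mathbb Z[T,T^{-1}]$ does not change the quasi-isomorphism type of the cone; so an argument that merely exhibits ``one intersection with $\mu$'' cannot by itself pin the twist to the $\mathfrak y_k$--branch or fix its sign --- to do that honestly you would have to trace the $\mathbb Z[H^1(Y_0(K))]$--action through the construction of the twisted triangle, which is exactly what \cite[Theorem~9.23]{OSzAnn2} packages. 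I suggest citing that statement directly; after that, as you say, the rest is a transcription of the proof of Theorem~\ref{thm:ZeroMC}, and Proposition~\ref{prop:FactorU} applies verbatim in the nontorsion case because the twisted cobordism maps are the untwisted ones multiplied by monomials.
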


Given a chain map $f\co A\to B$, let $MC(f)$ be the mapping cone of $f$, namely, $MC(f)=A\oplus B$, with the differential 
$$\partial=\begin{pmatrix}
\partial_A &0\\
f &\partial_B
\end{pmatrix}.
$$

In \cite[Section~9]{OSzAnn2},
Ozsv\'ath and Szab\'o constructed a genus $h$ Heegaard quadruple
$$(\Sigma,\mbox{\boldmath${\alpha}$},\mbox{\boldmath${\beta}$},\mbox{\boldmath${\gamma}$}, \mbox{\boldmath${\delta}$}, w),$$
such that
\begin{itemize}
\item $(\Sigma,\mbox{\boldmath${\alpha}$},\mbox{\boldmath${\beta}$}-\{\beta_h\})$ is a Heegaard diagram for $Y\Bslash K$, and $\beta_h$ represents a meridian of $K$;
\item $\gamma_i,\delta_i$ are small Hamiltonian translates of $\beta_i$, $i=1,\dots,h-1$;
\item $\gamma_h$ represents the canonical longitude of $K$, and $\delta_h$ is isotopic to the juxtaposition of the $n$--fold juxtaposition of $\beta_h$ with $\gamma_h$;
\item all the necessary admissibility conditions are satisfied.
\end{itemize}

The three diagrams
$$
(\Sigma,\mbox{\boldmath${\alpha}$},\mbox{\boldmath${\beta}$}),(\Sigma,\mbox{\boldmath${\alpha}$},\mbox{\boldmath${\gamma}$}), (\Sigma,\mbox{\boldmath${\alpha}$},\mbox{\boldmath${\delta}$})
$$
represent $Y, Y_0(K), Y_n(K)$, respectively.

When $n$ is sufficiently large, $\mathfrak t_k$ is the only Spin$^c$ structure in 
$$\Big\{\mathfrak t_{k+ln}\in \spinc(Y_0(K))\Big|\;l\in\mathbb Z\Big\}$$ which is represented by an intersection point in $\mathbb T_{\alpha}\cap\mathbb T_{\gamma}$.

We will use the notations from Subsection~\ref{Subsect:KnotFloer}. Let $$CF^+(Y_0(K),[\mathfrak t_k])=\bigoplus_{l\in\mathbb Z}CF^+(Y_0(K),\mathfrak t_{k+ln}).$$
Then $$CF^+(Y_0(K),[\mathfrak t_k])\cong CF^+(Y_0(K),\mathfrak t_k)$$
by our choice of $n$.

In \cite[Section~9]{OSzAnn2},
Ozsv\'ath and Szab\'o proved that the sequence 
$$
\xymatrix{
0\ar[r] &CF^+(Y_0(K),[\mathfrak t_k])\ar[r]^-{f^+_2}&CF^+(Y_n(K),\mathfrak s_t)\ar[r]^-{f^+_3} &CF^+(Y,\mathfrak s)\ar[r]&0
}
$$
is exact at $CF^+(Y_0(K),[\mathfrak t_k])$ and $CF^+(Y,\mathfrak s)$, and there exists a $U$--equivariant null-homotopy $$H\co CF^+(Y_0(K),[\mathfrak t_k])\to CF^+(Y,\mathfrak s)$$
for $f^+_3\circ f^+_2$. Here $f^+_2, f^+_3$ are maps induced by two-handle cobordisms, and
$$H([\mathbf x,i])
=\sum_{\mathbf z\in \mathbb T_{\alpha}\cap \mathbb T_{\beta}}\sum_{\phi\in\pi_2(\mathbf x,\Theta_{\gamma,\delta},\Theta_{\delta,\beta},\mathbf z),\mu(\phi)=-1}\#\mathcal M(\phi)[\mathbf z,i-n_w(\phi)].$$

For any integer $\delta\ge0$, we will consider the subcomplex $CF^{\delta}$ of $CF^+$, generated by $[\mathbf x,i]$ with $0\le i\le \delta$. Similarly, let $A^{\delta}_k,B^{\delta}$ be the corresponding subcomplexes of $A^+_k,B^+$ which are kernels of $U^{\delta+1}$. 
Let $f^{\delta}_2, f^{\delta}_3, H^{\delta}$ be the restrictions of $f^+_2, f^+_3, H$ to the corresponding $CF^{\delta}$, and let $v^{\delta}_k,h^{\delta}_k\co A^{\delta}_k\to B^{\delta}$ be the restrictions of $v^+_k,h^+_k$.

Define
$$\psi^{\delta}\co CF^{\delta}(Y_0(K),\mathfrak t_{k})\to MC(f^{\delta}_3), \text{ and }\psi^{+}\co CF^+(Y_0(K),\mathfrak t_{k})\to MC(f^+_3)$$
by
$$\psi^{\delta}=(f^{\delta}_2,H^{\delta}), \text{ and }\psi^+=(f^+_2,H).$$

\begin{thm}\label{thm:OriginalMC}
The $U$--equivariant chain map 
\[
\psi^{+}\co CF^+(Y_0(K),\mathfrak t_{k})\to MC(f^+_3)
\]
is a quasi-isomorphism.
\end{thm}

Various versions of the above folklore theorem has been cited in the literature many times, but we are not aware of any complete proof of it. A sketch of a proof of Theorem~\ref{thm:OriginalMC} with $\mathbb F_2=\mathbb Z/2\mathbb Z$ coefficients will be given in the appendix.

\begin{proof}[Proof of Theorem~\ref{thm:ZeroMC} when $c_1(\mathfrak s)$ is torsion]
In this case, there exists an absolute $\mathbb Q$--grading on $CF^{+}(Y_n(K),\mathfrak s_t)$ and $CF^+(Y,\mathfrak s)$.
As in \cite[Lemma~4.4]{OSzIntSurg}, 
 there exists an $N>0$ such that whenever $n\ge N$ and $\mathfrak S\in \spinc(W'_n(K))$ induces a nontrivial map
$$f^{\delta}_{W'_n(K),\mathfrak S}\co CF^{\delta}(Y_n(K),\mathfrak s_t)\to CF^{\delta}(Y,\mathfrak s),$$
$\mathfrak S$ must be $\mathfrak x_k$ or $\mathfrak y_k$.
In particular, by Theorem~\ref{thm:LargeSurg}, the two chain complexes 
$MC(f^{\delta}_3)$ and $MC(v^{\delta}_{k}+h^{\delta}_{k})$
are equal.
It follows from Theorem~\ref{thm:OriginalMC} that
$\psi^{\delta}$ is a quasi-isomorphism.

If $k=0$, there exists a $\mathbb Z$--grading on $CF^+(Y_0(K),\mathfrak t_{0})$. Moreover, since $v^+_{0}$ and $h^+_{0}$ have the same grading shift, $MC(v^+_{0}+h^+_{0})$ is also $\mathbb Z$--graded. By \cite[Lemma~2.7]{OSzIntSurg},
the map
$$(\overline{\psi^+})_*\co HF^+(Y_0(K),\mathfrak t_{0})\to H_*(MC(v^+_{0}+h^+_{0}))$$
is a grading preserving $\mathbb Z[U]$--isomorphism.

If $k\ne0$, $c_1(\mathfrak t_{k})$ is nontorsion. There exists a $\delta\ge0$, such that the map $U^{\delta+1}$ on
$HF^+(Y_0(K),\mathfrak t_k)$ is zero. So the short exact sequence
$$
\xymatrix{
0\ar[r]&CF^{\delta}(Y_0(K),\mathfrak t_k)\ar[r]&CF^+(Y_0(K),\mathfrak t_k)\ar[r]^-{U^{\delta+1}}&CF^+(Y_0(K),\mathfrak t_k)\ar[r]&0
}
$$
gives rise to the short exact sequence 
$$
\xymatrix{
0\ar[r] &HF^+(Y_0(K),\mathfrak t_k)\ar[r] &HF^{\delta}(Y_0(K),\mathfrak t_k)\ar[r] &HF^+(Y_0(K),\mathfrak t_k)\ar[r] &0.
}
$$
Similarly, we have
$$
\xymatrix{
0\ar[r] &H_*(MC(v^+_{k}+h^+_{k}))\ar[r] &H_*(MC(v^{\delta}_{k}+h^{\delta}_{k}))\ar[r] &H_*(MC(v^+_{k}+h^+_{k}))\ar[r] &0.
}
$$
Hence we have a commutative diagram
\[
\xymatrix{
0\ar[r] &HF^+(Y_0(K),\mathfrak t_k)\ar[r]\ar[d]^{(\overline{\psi^+})_*} &HF^{\delta}(Y_0(K),\mathfrak t_k)\ar[r]\ar[d]^{(\psi^{\delta})_*} &HF^+(Y_0(K),\mathfrak t_k)\ar[r]\ar[d]^{(\overline{\psi^+})_*} &0\\
0\ar[r] &H_*(MC(v^+_{k}+h^+_{k}))\ar[r] &H_*(MC(v^{\delta}_{k}+h^{\delta}_{k}))\ar[r] &H_*(MC(v^+_{k}+h^+_{k}))\ar[r] &0.
}
\]
Using the fact that $\psi^{\delta}$ is a quasi-isomorphism, we see that \[(\overline{\psi^+})_*\co HF^+(Y_0(K),\mathfrak t_k)\to H_*(MC(v^+_{k}+h^+_{k}))\] is both injective and surjective.
\end{proof}

Given $k\in\mathbb Z$ and $n>0$, let
$$\mathcal X_k=\big\{ \mathfrak x\in\spinc(W'_n(K))\big|\: \langle c_1(\mathfrak x),\widehat{\varphi}_n\rangle\equiv 2k-n\pmod{2n},\quad \mathfrak x|Y=\mathfrak s\big\}.$$

For the case when $c_1(\mathfrak s)$ is nontorsion, we need the following proposition.

\begin{prop}\label{prop:FactorU}
Suppose that $n\ge \max\{2|k|+g(F),2g(F)\}$, $\mathfrak x\in\mathcal X_k\setminus\{\mathfrak x_k,\mathfrak y_k\}$, then the map
$F^{-}_{W'_n(K),\mathfrak x}$ factorizes through the map
\[
U^{n-2|k|-g}\co HF^{-}(Y_n(K),\mathfrak s_t)\to HF^{-}(Y_n(K),\mathfrak s_t).
\]
Moreover, if $c_1(\mathfrak s)$ is nontorsion, then $F^{+}_{W'_n(K),\mathfrak x}$ factorizes through
$$
U^{n-2|k|-g}\co HF^{+}(Y_n(K),\mathfrak s_t)\to HF^{+}(Y_n(K),\mathfrak s_t).
$$
\end{prop}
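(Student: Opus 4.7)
The plan is to apply Theorem~\ref{thm:AdjRel} to a carefully stabilized surface $\Sigma\subset W'_n(K)$, obtained from $\widehat F_n$ (or its orientation reverse) by internal connect-sum with standard genus-$1$ surfaces supported in small balls. The idea is to exploit the freedom to enlarge $g(\Sigma)$ while keeping $[\Sigma]=\pm\widehat\varphi_n$ and $[\Sigma]\cdot[\Sigma]=-n$ fixed, so as to arrange the extremal condition $\langle c_1(\mathfrak{x}),[\Sigma]\rangle-[\Sigma]\cdot[\Sigma]=-2g(\Sigma)$ at the given non-extremal $\mathfrak{x}$.

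Writing $\langle c_1(\mathfrak{x}),\widehat\varphi_n\rangle=2k+(2l-1)n$ with $l\not\in\{0,1\}$, I would orient $\Sigma$ as $-\widehat F_n$ when $l\geq 2$ and as $\widehat F_n$ when $l\leq -1$, so that $\epsilon=-1$ in either case. A short computation then determines $g(\Sigma)=k+(l-1)n$ or $g(\Sigma)=-k-ln$ respectively, both of which satisfy $g(\Sigma)\geq g$ under the hypothesis $n\geq\max\{2|k|+g,2g\}$, so the stabilization is possible. Theorem~\ref{thm:AdjRel} then yields
\[
F^{\circ}_{W'_n(K),\mathfrak{x}}=F^{\circ}_{W'_n(K),\mathfrak{x}'}(i_*(\zeta_\Sigma)),
\]
where $\mathfrak{x}'$ is $\mathfrak{S}_{l-1}$ or $\mathfrak{S}_{l+1}$, one step closer to the extremal pair $\{\mathfrak{x}_k,\mathfrak{y}_k\}$.

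The critical observation for the $U$-bookkeeping is that each stabilizing torus sits inside a contractible ball of $W'_n(K)$, so its two $H_1$-generators die under $i_*\co H_1(\Sigma)\to H_1(W'_n(K))/\mathrm{Tors}$. Consequently $i_*(H_1(\Sigma))$ has rank at most $2g$, and any surviving monomial $U^a\otimes\omega$ of $i_*(\zeta_\Sigma)$ satisfies $|\omega|\leq 2g$, forcing $a\geq g(\Sigma)-g$. A direct check shows $g(\Sigma)-g\geq n-2|k|-g$ in every case, so $F^-_{W'_n(K),\mathfrak{x}}$ factors through $U^{n-2|k|-g}$ acting on $HF^-(Y_n(K),\mathfrak s_t)$. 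For the $HF^+$ statement when $c_1(\mathfrak s)$ is nontorsion, naturality of (\ref{eq:SES+-inf}) together with the commuting squares formed by cobordism maps and the inclusions $HF^+\hookrightarrow HF^-$ let the $HF^-$ factorization restrict to the desired $HF^+$ factorization.

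The main obstacle is the $U$-bookkeeping inside $i_*(\zeta_\Sigma)$: one must confirm that no unexpectedly low $U$-power monomial survives and that the stabilizing tori genuinely contribute trivially to $H_1(W'_n(K))/\mathrm{Tors}$. A secondary concern is verifying the case-by-case inequality $g(\Sigma)-g\geq n-2|k|-g$ without sign errors.
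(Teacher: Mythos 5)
Your argument is correct, but it takes a genuinely different route from the paper's. The paper does not invoke Theorem~\ref{thm:AdjRel} here at all; instead it re-runs the model computation underlying that theorem: it cuts out the punctured neighborhood $N=\nu(\widehat F_n)\Bslash B^4$, viewed as a cobordism from $S^3$ to $V_{-n}(B_g)$ with $V=\#^{2g}(S^1\times S^2)$ and $B_g$ the Borromean knot, tracks Maslov gradings and the double filtration in $HFK^{\infty}(V,B_g)$ to show that $F^{\le0}_{N,\mathfrak x|N}$ factors through $U^{n-2|k|-g}$, and then transports this to $W'_n(K)$ via the connected-sum naturality square (\ref{eq:le0Comm}) and the composition law of Lemma~\ref{lem:SpincInj}; the nontorsion hypothesis is used only to pass from $HF^-$ to $HF^+$ via (\ref{eq:SES+-inf}). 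You instead black-box the adjunction relation and feed it a stabilized copy of $\pm\widehat F_n$ whose genus is tuned so that the extremality hypothesis holds at the given non-extremal $\mathfrak x$; your case check is right (your exponent is in fact at least $n-|k|-g$, slightly better than needed), and the observation that the shifted Spin$^c$ structure moves toward the extremal pair is harmless but not needed, since no iteration is required. The two points you flagged as obstacles are fine: ``degree $2g$'' in Theorem~\ref{thm:AdjRel} means homogeneous (with $U$ of degree two and $H_1$ classes of degree one), as it is for the element constructed in \cite{OSzSympl}, so every monomial satisfies $2a+|\omega|=2g(\Sigma)$; and since the stabilizing handles lie in balls, $i_*\colon H_1(\Sigma)\to H_1(W'_n(K))/\mathrm{Tors}$ has image of rank at most $2g$, so $\Lambda^m$ of that image vanishes for $m>2g$ and $i_*(\zeta)=U^{g(\Sigma)-g}\cdot\xi$ with $\xi\in\mathbb Z[U]\otimes\Lambda^{\le 2g}(H_1(W'_n(K))/\mathrm{Tors})$; the factoring map is then $y\mapsto F^{\circ}_{W'_n(K),\mathfrak x'}(\xi\otimes y)$ using the $\Lambda^*(H_1/\mathrm{Tors})$-extended, $U$-equivariant cobordism maps. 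What your route buys: it is shorter, avoids redoing the Borromean model and the connected-sum formula, and, since Theorem~\ref{thm:AdjRel} holds for all flavors, it actually yields the $HF^+$ factorization without the nontorsion hypothesis (your appeal to (\ref{eq:SES+-inf}) mirrors the paper's but is not even necessary). What the paper's route buys is self-containedness at the quantitative level: it derives the needed $U$-power directly from the filtration structure of $HFK^{\infty}(V,B_g)$ rather than relying on the precise (homogeneous) form of $\zeta$ in the stated adjunction relation.
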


Our method of proving this proposition is taken from \cite[Theorem~3.1]{OSzSympl}.

Let $CF^{\le0}(Z,\mathfrak u)$ be the subcomplex of $CF^{\infty}(Z,\mathfrak u)$ which consists of $[\mathbf x,i]$, $i\le0$. This chain complex is clearly isomorphic to
$CF^{-}(Z,\mathfrak u)$ via the $U$--action.
Ozsv\'ath and Szab\'o \cite{OSzAbGr} defined a natural transformation
$$F^{\circ}_{Y\#Z,\mathfrak t\#\mathfrak u}\co HF^{\circ}(Y,\mathfrak t)\otimes HF^{\le0}(Z,\mathfrak u)\to HF^{\circ}(Y\#Z,\mathfrak t\#\mathfrak u).$$

Suppose that $W\co Y_1\to Y_2$ is a cobordism equipped with a Spin$^c$ structure $\mathfrak S\in\spinc(W)$, $\mathfrak S|Y_i=\mathfrak t_i$.
Then there is a commutative diagram \cite[Proposition~4.4]{OSzAbGr}
\begin{equation}\label{eq:le0Comm}
\xymatrixcolsep{5pc}
\xymatrix{ HF^{\circ}(Y_1,\mathfrak t_1)\otimes HF^{\le0}(Z,\mathfrak u)\ar[r]^-{F^{\circ}_{Y_1\#Z,\mathfrak t_1\#\mathfrak u}}
\ar[d]^-{F^{\circ}_{W,\mathfrak S}\otimes\mathrm{id}}& HF^{\circ}(Y_1\#Z,\mathfrak t_1\#\mathfrak u)\ar[d]^-{F^{\circ}_{W\#(Z\times[0,1]),\mathfrak S\#\mathfrak u}}\\
 HF^{\circ}(Y_2,\mathfrak t_2)\otimes HF^{\le0}(Z,\mathfrak u) \ar[r]^-{F^{\circ}_{Y_2\#Z,\mathfrak t_2\#\mathfrak u}} & HF^{\circ}(Y_2\#Z,\mathfrak t_2\#\mathfrak u)
}
\end{equation}

\begin{proof}[Proof of Proposition~\ref{prop:FactorU}]
As in the proof of \cite[Theorem~3.1]{OSzSympl}, we will first do a model computation. Let $N=\nu(\widehat F_n)\Bslash B^4$ be a punctured neighborhood of $\widehat F_n\subset W'_n(K)$. Let $V=\#^{2g}(S^1\times S^2)$, then
$N$ is a cobordism from $S^3$ to $V_{-n}(B_g)$, where $B_g\subset V$ is the genus $g$ Borromean knot. 

Recall from \cite[Section~9]{OSzKnot} that $$HFK^{\infty}(V,B_g)\cong \mathbb Z[U,U^{-1}]\otimes\Lambda^*(\mathbb Z^{2g})$$ is generated by $\mathbf 1\otimes\theta$ as a module over $\mathbb Z[U,U^{-1}]\otimes\Lambda^*(\mathbb Z^{2g})$. Here $$\theta\in \widehat{HF}(V)\cong \Lambda^*(\mathbb Z^{2g})$$ is the generator with the highest Maslov grading. The double filtration level of $\mathbf 1\otimes\theta$ is $(0,g)$, and the double filtration level of $HFK^{\infty}(V,B_g)$ is supported in the set $$\Big\{(i,j)\in\mathbb Z^2\Big|\:|i-j|\le g\Big\}.$$

The map 
$F^{\le0}_{N,\mathfrak x|N}$
is the composition of two maps: The first map sends $HF^{\le0}(S^3)$ isomorphically to $\mathbb Z[U]\otimes\theta\subset HF^{\le0}(V)$, the second map is induced by the two-handle cobordism
$V\to V_{-n}(B_g)$. Since we assume $n\ge2g(F)$, by \cite[Remark~4.3]{OSzKnot}, we can apply \cite[Theorem~4.1, Corollary~4.2]{OSzKnot}. In particular,
when $\mathfrak x=\mathfrak x_k$,  the second map is represented by a map
$$\prescript{b}{}v^{\le0}_k\co CFK^{\infty}(V,B_g)\{i\le0\}\to CFK^{\infty}(V,B_g)\{i\le0\text{ or }j\le -k\},$$
and $(\prescript{b}{}v^{\le0}_k)_*$ maps $\mathbb Z[U]\otimes\theta$ isomorphically to $\mathbb Z[U]\otimes\theta\subset HFK^{\infty}(V,B_g)$.  

Suppose that $\mathfrak x\in \mathcal X_k\setminus\{\mathfrak x_k,\mathfrak y_k\}$, then $\mathfrak x$ is equal to $\mathfrak x_k+l\mathrm{PD}[\widehat F_n]$ for $l\ne0,-1$.
The grading difference of $F^{\le0}_{N,\mathfrak x_k|N}(\mathbf 1)$ and $F^{\le0}_{N,(\mathfrak x_k+l\mathrm{PD}[\widehat F_n])|N}(\mathbf 1)$
is
\begin{eqnarray*}
&&\frac{c^2_1(\mathfrak x_k|N)-c_1^2((\mathfrak x_k+l\mathrm{PD}[\widehat F_n])|N)}4\\
&=&\frac{-(2k-n)^2+(2k-n-2ln)^2}{4n}=l(ln+n-2k).
\end{eqnarray*}
So 
$F^{\le0}_{N,\mathfrak x|N}(\mathbf 1)$ is contained in the same Maslov grading level as
$$U^{\frac{l(l+1)n}2-lk}\otimes\theta,$$
whose double filtration level is $$(-\frac{l(l+1)n}2+lk,-\frac{l(l+1)n}2+lk+g).$$
In $HFK^{\infty}(V,B_g)$, the Maslov grading is equal to $i+j$ up to an overall translation, hence the double filtration level of $F^{\le0}_{N,\mathfrak x|N}(\mathbf 1)$
is contained in the range
$$(-\frac{l(l+1)n}2+lk+m,-\frac{l(l+1)n}2+lk+g-m),\quad m=0,1,\dots,g.$$
Any element in $HFK^{\infty}(V,B_g)$ with the above filtration levels can be obtained from $$U^{\frac{l(l+1)n}2-lk-g}\otimes\theta,$$
whose double filtration level is $$(-\frac{l(l+1)n}2+lk+g,-\frac{l(l+1)n}2+lk+2g),$$ by applying the action of an element in  $\mathbb Z[U]\otimes\Lambda^*(H_1(V))$.
Since $l\ne0,-1$ and $n\ge2|k|$, we have $$\frac{l(l+1)n}2-lk-g\ge n-2|k|-g.$$ So the map $F^{\le0}_{N,\mathfrak x|N}$ factorizes through $$U^{n-2|k|-g}\co HF^{\le0}(S^3)\to HF^{\le0}(S^3).$$

As in the proof of \cite[Theorem~3.1]{OSzSympl}, we can decompose the cobordism $W'_n(K)$ as the composition of two cobordisms
$$W_1\co Y_n(K)\to V_{-n}(B_g)\#Y_n(K)$$
and $$W_2\co V_{-n}(B_g)\#Y_n(K)\to Y,$$
where $W_1$ is a ``timewise'' connected sum of $N$ and $Y_n(K)\times[0,1]$. By (\ref{eq:le0Comm}), we have
\[
\xymatrixcolsep{5pc}
\xymatrix{ HF^{-}(S^3)\otimes HF^{\le0}(Y_n(K),\mathfrak s_t)\ar[r]^-{F^{-}_{S^3\#Y_n(K),\mathfrak s_t}}
\ar[d]^-{F^{-}_{N,\mathfrak x|N}\otimes\mathrm{id}}& HF^{-}(Y_n(K),\mathfrak s_t)\ar[d]^-{F^{-}_{W_1,\mathfrak x|N\#\mathfrak s_t}}\\
 HF^{-}(V_{-n}(B_g))\otimes HF^{\le0}(Y_n(K),\mathfrak s_t) \ar[r]^-{F^{-}_{V_{-n}(B_g)\#Y_n(K),\mathfrak s_t}} & HF^{-}(V_{-n}(B_g)\#Y_n(K),\mathfrak s_t).
}
\]
The horizontal map in the first row
is surjective. Since $F^{\le0}_{N,\mathfrak x|N}$ and thus $F^{-}_{N,\mathfrak x|N}$ factorize through $U^{n-2|k|-g}$, $F^{-}_{W_1,\mathfrak x|W_1}$ also factorizes through $U^{n-2|k|-g}$.
By Lemma~\ref{lem:SpincInj}, $F^{-}_{W'_n(K),\mathfrak x}$ factorizes through $F^{-}_{W_1,\mathfrak x|W_1}$, so $F^{-}_{W'_n(K),\mathfrak x}$ factorizes through $U^{n-2|k|-g}$.

When $c_1(\mathfrak s)$ is nontorsion, using the natural short exact sequence (\ref{eq:SES+-inf}),
we see that $F^{+}_{W'_n(K),\mathfrak x}$ also factorizes through
$U^{n-2|k|-g}$, since $HF^+$ is just a submodule of $HF^-$.
\end{proof}

\begin{proof}[Proof of Theorem~\ref{thm:ZeroMC} when $c_1(\mathfrak s)$ is nontorsion]
As before, Theorem~\ref{thm:OriginalMC} implies that
$CF^+(Y_0(K),\mathfrak t_k)$ is quasi-isomorphic to
$MC(f^+_{3})$,
where $$f^+_3=\sum_{\mathfrak x\in\mathcal X_k}f^+_{W'_n(K),\mathfrak x}\co CF^+(Y_n(K),\mathfrak s_t)\to CF^+(Y,\mathfrak s).$$
Since $c_1(\mathfrak s)$ is nontorsion, $H_*(A^+_k)$ is a finitely generated abelian group. So $U^m|H_*(A^+_k)=0$ when $m$ is greater than a constant $C_1$ independent of $n$. 
By Theorem~\ref{thm:LargeSurg}, this implies that $U^m|HF^+(Y_n(K),\mathfrak s_t)=0$ when $m>C_1$.
Proposition~\ref{prop:FactorU} implies that $F^+_{W'_n(K),\mathfrak x}=0$ when  $\mathfrak x\in\mathcal X_k\setminus\{\mathfrak x_k,\mathfrak y_k\}$
and $n>\max\{2|k|+g,2g\}+C_1$. Our conclusion follows by Theorem~\ref{thm:LargeSurg}.
\end{proof}

\begin{proof}[Proof of Theorem~\ref{thm:ZeroMCTwisted}]
We use the fact that 
$\underline{CF}^+(Y_0(K),\mathfrak t_k;\mathbb Z[T,T^{-1}])$ is quasi-isomorphic to
$MC(\underline{f^+_3})$,
where $$\underline{f^+_3}=\sum_{\mathfrak x\in\mathcal X_k}T^{\frac1n\langle \mathfrak x-\mathfrak x_k,\widehat{\varphi}_n\rangle}f^+_{W'_n(K),\mathfrak x}\co CF^+(Y_n(K),\mathfrak s_t)[T,T^{-1}]\to CF^+(Y,\mathfrak s)[T,T^{-1}].$$
The corresponding long exact sequence can be found in \cite[Theorem~9.23]{OSzAnn2} and the last formula in \cite[Section~3]{OSzGenus}. The fact that the two chain complexes are quasi-isomorphic can be proved in a similar way as Theorem~\ref{thm:OriginalMC}.
The rest of the argument is the same as in the proof of Theorem~\ref{thm:ZeroMC}. 
\end{proof}

\section{The zero surgery}\label{sect:ZeroSurg}

In this section, we will prove our main results.
For simplicity, we will use coefficients in a fixed field $\mathbb F$ for Heegaard Floer homology throughout this section. We may choose $\mathbb F=\mathbb F_2$ since we only prove Theorem~\ref{thm:OriginalMC} with $\mathbb F_2$ coefficients in the Appendix, but other fields can also be used if we assume Theorem~\ref{thm:OriginalMC} with $\mathbb Z$ coefficients.

\subsection{A rank inequality}

\begin{lem}\label{lem:vhIm}
Suppose that $Y,K,F,X,G$ satisfy the same condition as in Theorem~\ref{thm:4genus}, then
\[\mathrm{im}(v_{k}^+)_*\supset \mathrm{im}(h_{k}^+)_*,\]
when $k\ge g(G)$. Moreover, if $G$ is a disk, then $$(v^+_0)_*=(h^+_0)_*.$$
\end{lem}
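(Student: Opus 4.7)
The plan is to reduce to proving $\operatorname{im}(v_g^+)_*\supset\operatorname{im}(h_g^+)_*$ for $g:=g(G)$; together with (\ref{eq:vIm}) and (\ref{eq:hIm}) this yields the containment for all $k\ge g$. For $n$ sufficiently large, Theorem~\ref{thm:LargeSurg} identifies $(v_g^+)_*$ and $(h_g^+)_*$ with $(f^+_{W'_n(K),\mathfrak x_g})_*$ and $(f^+_{W'_n(K),\mathfrak y_g})_*$ respectively, so the real task is to compare these two cobordism maps.

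The geometric input is to combine $G$ with the core of the $2$-handle. Form the composite cobordism $\tilde X:=W'_n(K)\cup_Y X$ from $Y_n(K)$ to $Y$. The surface $G\subset X$ and the core disk $D\subset W'_n(K)$ share their boundary $K$, so $\Sigma:=G\cup_K D$ is a closed oriented connected surface of genus $g$ in $\tilde X$. Since $[G]=\iota_*[F]\in H_2(X,K\times\{0\})$, we have $[\Sigma]=[\widehat F_n]$ in $H_2(\tilde X)$, and in particular $[\Sigma]^2=-n$.

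I then apply the adjunction relation (Theorem~\ref{thm:AdjRel}) to $-\Sigma$ together with an extension $\tilde{\mathfrak y}_g\in\spinc(\tilde X)$ of $\mathfrak y_g$. The computation
\[
\langle c_1(\tilde{\mathfrak y}_g),-[\Sigma]\rangle-(-[\Sigma])^2=-(2g+n)-(-n)=-2g
\]
holds and $\epsilon=-1$, so
\[
F^+_{\tilde X,\tilde{\mathfrak y}_g}=F^+_{\tilde X,\tilde{\mathfrak x}_g}(i_*(\zeta)),
\]
where $\tilde{\mathfrak x}_g:=\tilde{\mathfrak y}_g+\mathrm{PD}[\Sigma]$. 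From (\ref{eq:xkyk}) together with $[\widehat F_n]^2=-n$ one deduces $\mathfrak y_g=\mathfrak x_g-\mathrm{PD}[\widehat F_n]$, so $\tilde{\mathfrak x}_g|_{W'_n(K)}=\mathfrak x_g$. The restriction $\spinc(\tilde X)\to\spinc(W'_n(K))\times\spinc(X)$ is injective because $H^1(X)\to H^1(Y)$ is surjective, so Lemma~\ref{lem:SpincInj} gives the single-term composition law. I next choose $\tilde{\mathfrak y}_g|_X$ so that $\langle c_1,E_i\rangle=\pm 1$ on every $\overline{\mathbb{CP}^2}$-summand; the intersection $[\Sigma]\cdot E_i=0$ (since $[\Sigma]=[\widehat F_n]$ admits a representative inside $W'_n(K)$, disjoint from the exceptional spheres) guarantees that $\tilde{\mathfrak x}_g|_X$ satisfies the same property. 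By the blow-up formula, both $F^+_{X,\tilde{\mathfrak y}_g|_X}$ and $F^+_{X,\tilde{\mathfrak x}_g|_X}$ are the identity on $HF^+(Y,\mathfrak s)$, and via Theorem~\ref{thm:LargeSurg} the previous identity collapses to $(h^+_g)_*=i_*(\zeta)\cdot(v^+_g)_*$.

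To finish, note that $i_*(\zeta)\in\mathbb Z[U]\otimes\Lambda^*(H_1(\tilde X)/\mathrm{Tors})$ acts on $HF^+(Y,\mathfrak s)$ through the isomorphism $H_1(\tilde X)\cong H_1(Y)$ coming from restriction to the $Y$-end. Naturality of the cobordism map makes $\operatorname{im}(v^+_g)_*$ a $\mathbb Z[U]\otimes\Lambda^*H_1(Y)$-submodule of $HF^+(Y,\mathfrak s)$, hence preserved by this action, so $\operatorname{im}(h^+_g)_*\subset\operatorname{im}(v^+_g)_*$. When $G$ is a disk, $g=0$ and $\zeta=1$, so the identity sharpens to $(h^+_0)_*=(v^+_0)_*$. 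The main obstacle I foresee is the $H_1$-bookkeeping: making precise how $i_*(\zeta)$ splits across the two factors of $\tilde X$ in the composition law, and confirming that the resulting $H_1(Y)$-action on $HF^+(Y,\mathfrak s)$ genuinely preserves $\operatorname{im}(v^+_g)_*$.
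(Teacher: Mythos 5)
Your argument is correct and is essentially the paper's own proof: the paper caps $-G$ with the cocore of the $2$--handle inside $W=W'_n(K)\#N\overline{\mathbb CP^2}$ (which is diffeomorphic to your composite $\tilde X$), notes that $\mathfrak x_k\#N\mathfrak t_1=\mathfrak y_k\#N\mathfrak t_1-\mathrm{PD}[\Sigma]$ with $\langle c_1(\mathfrak y_k\#N\mathfrak t_1),[\Sigma]\rangle=-2k-n$, and concludes directly from Theorems~\ref{thm:LargeSurg} and~\ref{thm:AdjRel} together with the blow-up formula, so your composition-law packaging is just a reorganization of the same idea. Two minor remarks: Lemma~\ref{lem:SpincInj} does not literally apply to $W_1=W'_n(K)$ (its attaching circle is the dual knot, which is not null-homologous in $Y_n(K)$), but your direct Mayer--Vietoris justification via surjectivity of $H^1(X)\to H^1(Y)$ is exactly what is needed; and the $H_1$--bookkeeping you flag at the end is harmless, since $H_1(Y_n(K))\to H_1(W'_n(K))$ and $H_1(Y)\to H_1(W'_n(K))$ are both surjective, so the action of $i_*(\zeta)$ can be realized on either end and $\mathrm{im}(v^+_g)_*$ is indeed a $\mathbb Z[U]\otimes\Lambda^*(H_1(Y)/\mathrm{Tors})$--submodule.
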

\begin{proof}
Without loss of generality, we may assume $k=g(G)$, since we can always increase the genus of $G$.
Let $W=W'_n(K)\#N\overline{\mathbb CP^2}$, $\Sigma\subset W$ be the closed oriented surface obtained by capping off $-G$ with the cocore of the $2$--handle in $W'_n(K)$, and $\mathfrak S$ be the connected sum of $\mathfrak y_k$ with $N$ copies of $\mathfrak t_1$, where $\mathfrak t_1\in\spinc(\overline{\mathbb CP^2})$ satisfies that $c_1(\mathfrak t_1)$ generates $H^2(\overline{\mathbb CP^2})$. It follows from (\ref{eq:xkyk}) that
\[
\langle c_1(\mathfrak y_k\#N\mathfrak t_1),[\Sigma]\rangle=-2k-n,\quad \mathfrak x_k\#N\mathfrak t_1=\mathfrak y_k\#N\mathfrak t_1-\mathrm{PD}[\Sigma].
\]
Our conclusion follows from Theorems~\ref{thm:LargeSurg} and~\ref{thm:AdjRel} and the blow-up formula \cite[Theorem~1.4]{OSzFour}.
\end{proof}

The following proposition is an analogue of \cite[Corollary~4.5]{OSzKnot}.

\begin{prop}\label{prop:ZeroTorsEqual}
Let $K$ be a null-homologous knot in $Y$, $\varphi$ be a Seifert homology class, $\mathfrak s\in \spinc(Y)$. 
Let $$d=\max\left\{i\in\mathbb Z\left| \widehat{HFK}(Y,K,\mathfrak s,\varphi,i)\ne0\right.\right\}.$$
If 
\begin{equation}\label{eq:vhIm2}
\mathrm{im}(v_{d-1})_*\supset \mathrm{im}(h_{d-1})_*,
\end{equation}
and
one of the following conditions holds:
\begin{itemize}
\item $c_1(\mathfrak s)$ is nontorsion and $d\ge1$,
\item $HF^+(Y,\mathfrak s)=0$,
\item $d>1$,
\end{itemize}
then
$$\mathrm{rank\:}HF^+(Y_0(K),\mathfrak t_{d-1})\ge\mathrm{rank\:}\widehat{HFK}(Y,K,\mathfrak s,\varphi,d).$$
\end{prop}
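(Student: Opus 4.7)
The strategy is to invoke Theorem~\ref{thm:ZeroMC}, which identifies $HF^+(Y_0(K),\mathfrak{t}_{d-1})$ with the homology of the mapping cone $MC(v^+_{d-1}+h^+_{d-1})$. The plan is to first compute $\mathrm{rank\:}H_*(MC(v^+_{d-1}))$ by analyzing $\ker v^+_{d-1}$, and then use (\ref{eq:vhIm2}) to conclude that replacing $v^+_{d-1}$ by $v^+_{d-1}+h^+_{d-1}$ in the cone can only increase the total rank.

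For the first step, since $d$ is the top Alexander grading of $C=CFK^\infty(Y,K,\mathfrak{s},\varphi)$, every generator $[\mathbf{x},i,j]$ of $C$ satisfies $j-i\le d$. Hence the kernel of the chain-level surjection $v^+_{d-1}\colon A^+_{d-1}\twoheadrightarrow B^+$, namely $C\{i<0,j\ge d-1\}$, is concentrated at $(i,j)=(-1,d-1)$ on generators with $\underline{\mathfrak{s}}(\mathbf{x})=\xi_d$; multiplication by $U$ identifies it with $C\{i=0,j=d\}$, whose homology is $\widehat{HFK}(Y,K,\mathfrak{s},\varphi,d)$. Combining the long exact sequence of the cone of $v^+_{d-1}$ with the long exact sequence of $0\to\ker v^+_{d-1}\to A^+_{d-1}\to B^+\to 0$ yields $\dim H_n(MC(v^+_{d-1}))=\dim H_{n-1}(\ker v^+_{d-1})$ for every $n$, so $\mathrm{rank\:}H_*(MC(v^+_{d-1}))=\mathrm{rank\:}\widehat{HFK}(Y,K,\mathfrak{s},\varphi,d)$.

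For the comparison, (\ref{eq:vhIm2}) yields $\mathrm{im}(v^+_{d-1}+h^+_{d-1})_*\subset\mathrm{im}(v^+_{d-1})_*$. Although the total $\mathbb{Q}$-ranks of $H_*(A^+_{d-1})$ and $H_*(B^+)$ may be infinite (when $B^+$ contains a $U$-tower), each Maslov-graded piece is finite-dimensional, so rank-nullity applied grading-by-grading gives
\[\dim\ker(v+h)_*|_n=\dim\ker v_*|_n+\dim Q_n,\qquad \dim\mathrm{coker}\,(v+h)_*|_n=\dim\mathrm{coker}\,v_*|_n+\dim Q_n,\]
where $Q_n:=\mathrm{im}(v_*)|_n/\mathrm{im}(v+h)_*|_n$. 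The long exact sequence of the cone of $v^+_{d-1}+h^+_{d-1}$ then yields $\dim H_n(MC(v+h))\ge\dim H_n(MC(v))$ for every $n$, and summing establishes the required rank inequality.

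The three alternative hypotheses are present to guarantee that $\mathrm{rank\:}HF^+(Y_0(K),\mathfrak{t}_{d-1})$ is a finite quantity, so the inequality is substantive: non-torsion $c_1(\mathfrak{s})$ makes $HF^+(Y,\mathfrak{s})$ finite-dimensional; vanishing of $HF^+(Y,\mathfrak{s})$ reduces the cone to $A^+_{d-1}[1]$, whose homology is the finite-dimensional $\widehat{HFK}(\cdot,d)$; and $d>1$ forces $\langle c_1(\mathfrak{t}_{d-1}),\widehat{\varphi}\rangle=2(d-1)\ne 0$, so $\mathfrak{t}_{d-1}$ is non-torsion in $Y_0(K)$ and $HF^+(Y_0(K),\mathfrak{t}_{d-1})$ is finite-dimensional. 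The principal delicate point is precisely this finite-dimensionality bookkeeping when $HF^+(Y,\mathfrak{s})$ carries an infinite $U$-tower, which the per-grading rank-nullity step resolves.
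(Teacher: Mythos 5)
Your outline (identify $HF^+(Y_0(K),\mathfrak t_{d-1})$ with $H_*(MC(v^+_{d-1}+h^+_{d-1}))$ via Theorem~\ref{thm:ZeroMC}, identify the cone of $v^+_{d-1}$ alone with $\widehat{HFK}(Y,K,\mathfrak s,\varphi,d)$, then compare the two cones using (\ref{eq:vhIm2})) agrees with the paper's, but the comparison step --- which is the heart of the proposition --- has a genuine gap. The only situation in which $H_*(A^+_{d-1})$ and $H_*(B^+)$ can have infinite rank is when $c_1(\mathfrak s)$ is torsion and $HF^+(Y,\mathfrak s)\ne0$, so the relevant hypothesis is $d>1$; but precisely then $v^+_{d-1}$ and $h^+_{d-1}$ are homogeneous of \emph{different} degrees (their grading shifts differ by $2(d-1)\ne0$). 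Hence $v+h$ is not a graded map: its kernel and image do not split along the Maslov grading, $MC(v+h)$ carries no compatible $\mathbb Z$--grading, your spaces $Q_n$ are not even well defined (the image of $(v+h)_*$ on the degree--$n$ part of $H_*(A^+_{d-1})$ need not lie in the image of $v_*$ on that part), and ``rank--nullity grading by grading'' is unavailable. Worse, the inequality you want does not follow from $\mathrm{im\:}h_*\subset\mathrm{im\:}v_*$ together with finite-dimensional graded pieces alone: take $A$ with basis $(e_i)_{i\ge0}$, $B$ with basis $(f_i)_{i\ge1}$, graded by $\deg e_i=\deg f_i=i$, and set $v(e_0)=0$, $v(e_i)=f_i$ for $i\ge1$, $h(e_0)=f_1$, $h(e_i)=f_{i+1}$. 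Then $\mathrm{im\:}h\subset\mathrm{im\:}v$ and every graded piece is finite-dimensional, yet $v+h$ is bijective while $\mathrm{rank\:}\ker v+\mathrm{rank\:}\mathrm{coker\:}v=1$. What rescues the actual situation is exactly what your proposal never uses: for $d>1$ a homogeneous splitting $\rho$ of $v_*$ on its image makes $\rho h$ a map of \emph{strictly negative} degree, and since the grading on $H_*(A^+_{d-1})$ is bounded below the series $\mathrm{id}-\rho h+(\rho h)^2-\cdots$ is finite on each element; this inverts $\mathrm{id}+\rho h$ and gives $\mathrm{rank\:}\ker(v+h)_*=\mathrm{rank\:}\ker v_*$, which together with $\mathrm{im}(v+h)_*\subset\mathrm{im\:}v_*$ yields the rank inequality. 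In your write-up the three alternative hypotheses are assigned only the role of making the left-hand side finite; that is not where they are needed, and without them (e.g.\ $c_1(\mathfrak s)$ torsion, $d=1$, $HF^+(Y,\mathfrak s)\ne0$) this comparison is not established --- indeed the paper avoids the rank inequality in that case and argues differently in the proof of Theorem~\ref{thm:Slice}.

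A secondary imprecision: it is not true that every generator $[\mathbf x,i,j]$ of $C$ satisfies $j-i\le d$; the integer $d$ bounds the Alexander gradings in which the \emph{homology} $\widehat{HFK}$ is nonzero, not the chain groups. The chain-level kernel of $v^+_{d-1}$ is $C\{i<0,\ j\ge d-1\}$, and its homology is identified with $\widehat{HFK}(Y,K,\mathfrak s,\varphi,d)$ because the filtration levels with $j-i>d$ are acyclic (equivalently, after passing to a reduced model over $\mathbb Q$); this is the content of the exact triangle (\ref{eq:v(d-1)}), which the paper imports from the proof of \cite[Corollary~4.5]{OSzKnot}. So the conclusion of your first step is correct even though its stated justification is not, and the easy cases ($HF^+(Y,\mathfrak s)=0$, or $c_1(\mathfrak s)$ nontorsion, where all the groups in sight have finite rank and a global rank count suffices) are fine; the missing ingredient is the graded splitting/geometric-series argument in the torsion, $d>1$ case.
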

\begin{proof}
As in the proof of \cite[Corollary~4.5]{OSzKnot}, there is an exact triangle
\begin{equation}\label{eq:v(d-1)}
\xymatrix{
H_*(A^+_{d-1})\ar[r]^-{(v^+_{d-1})_*}&H_*(B^+)\ar[ld]\\ 
H_*(C\{(-1,d-1)\})\ar[u]&  
}.
\end{equation}
For simplicity, denote $(v^+_{d-1})_*=v$, $(h^+_{d-1})_*=h$. Then $\mathrm{im\:}h\subset \mathrm{im\:}v$ by (\ref{eq:vhIm2}).
It follows from (\ref{eq:v(d-1)}) that
\begin{equation}\label{eq:RankHFK}
\mathrm{rank\:}\widehat{HFK}(Y,K,\mathfrak s,\varphi,d)=\mathrm{rank\:}\ker v+\mathrm{rank\:}\mathrm{coker\:}v.
\end{equation}
By Theorem~\ref{thm:ZeroMC},
\begin{equation}\label{eq:RankHF+}
\mathrm{rank\:}HF^+(Y_0(K),\mathfrak t_{d-1})=\mathrm{rank\:}\ker(v+h)+\mathrm{rank\:}\mathrm{coker}(v+h).
\end{equation}

\noindent{\bf Case 1.}
 $c_1(\mathfrak s)$ is nontorsion and $d\ge1$.

 In this case both $H_*(A^+_{d-1})$ and $H_*(B^+)$ have finite ranks. It follows from (\ref{eq:RankHFK}) and (\ref{eq:RankHF+}) that
$$\mathrm{rank\:}\widehat{HFK}(Y,K,\mathfrak s,\varphi,d)=\mathrm{rank\:}H_*(A^+_{d-1})+\mathrm{rank\:}H_*(B^+)-2\mathrm{rank\:}\mathrm{im\:} v,$$
$$\mathrm{rank\:}HF^+(Y_0(K),\mathfrak t_{d-1})=\mathrm{rank\:}H_*(A^+_{d-1})+\mathrm{rank\:}H_*(B^+)-2\mathrm{rank\:}\mathrm{im}(v+h).$$
Our conclusion follows from (\ref{eq:vhIm2}).

\vspace{5pt}\noindent{\bf Case 2.} $HF^+(Y,\mathfrak s)=0$.

Since $H_*(B^+)=0$, the right hand sides of both (\ref{eq:RankHFK}) and (\ref{eq:RankHF+}) are $\mathrm{rank\:}H_*(A_d^+)$.

\vspace{5pt}\noindent{\bf Case 3.}
 $d>1$.

In this case we may assume $c_1(\mathfrak s)$ is torsion, then there is an absolute $\mathbb Q$--grading on $C$.
Let $\rho\co \mathrm{im\:}v\to H_*(A^+_{d-1})$ be a homogeneous homomorphism of $\mathbb F$--vector
spaces such that $$v\circ\rho=\mathrm{id}|\mathrm{im\:}v.$$
By (\ref{eq:vhIm2}), the map $\mathrm{id}+\rho h\co H_*(A^+_{d-1})\to H_*(A^+_{d-1})$ is well-defined.
Clearly, $\mathrm{id}+\rho h$ maps $\ker(v+h)$ to $\ker v$.

Since $d>1$, the grading shift of $h$ is strictly less than the grading shift of $v$, so the grading shift of $\rho h$ is negative. As the grading of $H_*(A^+_{d-1})$ is bounded from below, for any $x\in H_*(A^+_{d-1})$, $(\rho h)^m(x)=0$ when $m$ is sufficiently large. The map
$$\mathrm{id}-\rho h+(\rho h)^2-(\rho h)^3+\cdots\co H_*(A^+_{d-1})\to H_*(A^+_{d-1})$$
is well-defined, and it maps $\ker v$ to $\ker (v+h)$. This map is the inverse to $\mathrm{id}+\rho h\co\ker(v+h)\to \ker v$,
so $$\mathrm{rank}\ker(v+h)=\mathrm{rank}\ker v.$$
Since $\mathrm{im\:}(v+h)\subset \mathrm{im\:}v$,  $$\mathrm{rank\:}\mathrm{coker}(v+h)\ge\mathrm{rank\:}\mathrm{coker\:} v.$$
Our conclusion holds by (\ref{eq:RankHFK}), (\ref{eq:RankHF+}).
\end{proof}

\subsection{The proof of Theorem~\ref{thm:4genus} when $g(F)>1$}\label{subsect:4genus}

In this subsection, we will prove Theorem~\ref{thm:4genus} in the case $g(F)>1$. If $g(F)=1$, $G$ has to be a disk, and this case will be treated in Subsection~\ref{subsect:Genus1}.

\begin{proof}[Proof of Theorem~\ref{thm:4genus} when $g(F)>1$]
We first prove Property G1. Since $F$ is taut, we get $\widehat{HFK}(Y,K,[F],g(F))\ne0$ by Theorem~\ref{thm:SeifertNorm}. Using Lemma~\ref{lem:vhIm} and Proposition~\ref{prop:ZeroTorsEqual}, we see that $HF^+(Y_0(K),[\widehat F],g(F)-1)\ne0$, so $\widehat F$ is also taut.

Next, we prove Property G2. Suppose that $Y_0(K)$ is a surface bundle over $S^1$, with $S$ being the fiber in the homology class $\widehat{\varphi}$. By Property G1, $g(S)>0$, so $Y_0(K)$ is irreducible. If $Y\Bslash K$ is reducible, let $P\subset Y\Bslash K$ be an essential sphere. Since $Y_0(K)$ is irreducible, $P$ must bound a ball in $Y_0(K)$, and this ball must contain the dual knot of $K$. This is not possible because the dual knot of $K$ is not null-homologous in $Y_0(K)$. So $Y\Bslash K$ is irreducible.

Since $Y_0(K)$ fibers over $S^1$ with fiber in the homology class $[\widehat F]$, $\widehat F$ is isotopic to a fiber. (A reference for this folklore result can be found in the last sentence of the first paragraph in \cite[Section~3]{Thurston}.) 
We have $HF^+(Y_0(K),[\widehat F],g(F)-1)\cong\mathbb F$ by \cite[Theorem~5.2]{OSzSympl}. Using Lemma~\ref{lem:vhIm} and Proposition~\ref{prop:ZeroTorsEqual}, we see that \[\mathrm{rank\:}\widehat{HFK}(Y,K,[F],g(F))\le1.\] However, we already know $\widehat{HFK}(Y,K,[F],g(F))\ne0$, so $\widehat{HFK}(Y,K,[F],g(F))\cong\mathbb F$, and our conclusion follows from Theorem~\ref{thm:KnotFibered}.
\end{proof}

\subsection{The proof of Theorem~\ref{thm:Slice}}

Let $Y,K,X$ be as in Theorem~\ref{thm:Slice}, and let $G\subset X$ be the disk bounded by $K$. Without loss of generality, we will assume $K$ is not the unknot. There is a natural map 
\[
\pi\co X\to Y
\]
which is the composition of the pinching map $X\to Y\times[0,1]$ with the projection $Y\times[0,1]\to Y$.
Suppose that $\varphi_0=[\pi(G)]\in H_2(Y,K)$ is the homology class of the immersed disk $\pi(G)$, and $\varphi\in H_2(Y,K)$ is a Seifert homology class for $K$. Let $\widehat{\varphi}_0,\widehat{\varphi}\in H_2(Y_0(K))$ be the extensions of $\varphi_0,\varphi$.
Given a Spin$^c$ structure $\mathfrak s\in\spinc(Y)$,
let $\mathfrak t_{k}\in\spinc(Y_0(K))$ be the extension of $\mathfrak s|(Y\Bslash K)$ with $$\langle c_1(\mathfrak t_{k}),\widehat{\varphi}_0\rangle=2k,$$
and let $\xi_k\in\relspinc(Y,K)$ be the relative Spin$^c$ structure whose underlying Spin$^c$ structure is $\mathfrak s$, and $$\langle c_1(\xi_k),\varphi_0\rangle=2k+1.$$
Let $C=CFK^{\infty}(Y,K,\xi_0,\varphi_0)$.

\begin{lem}\label{lem:SpincDiff}
With the above notation, we have $$\langle c_1(\xi_k),\varphi\rangle-\langle c_1(\xi_l),\varphi\rangle=\langle c_1(\xi_k),\varphi_0\rangle-\langle c_1(\xi_l),\varphi_0\rangle=2k-2l.$$
\end{lem}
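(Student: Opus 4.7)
The second equality $\langle c_1(\xi_k),\varphi_0\rangle-\langle c_1(\xi_l),\varphi_0\rangle=2k-2l$ is immediate from the defining property of $\xi_k$, so the whole content is the first equality, i.e.\ that the pairing of $c_1(\xi_k)-c_1(\xi_l)$ is the same against $\varphi$ and against $\varphi_0$.

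My plan is to argue that $\varphi-\varphi_0$ lies in the image of $H_2(Y)\to H_2(Y,K)$, while $c_1(\xi_k)-c_1(\xi_l)$ restricts to $0$ in $H^2(Y)$, and then use naturality of the evaluation pairing under the inclusion of pairs. Concretely, both $\varphi$ and $\varphi_0$ are classes of surfaces whose boundary (as a $1$--cycle in $K$) is the fundamental class of $K$: $\varphi$ is represented by a Seifert-like surface, and $\varphi_0=[\pi(G)]$ is represented by the immersed disk $\pi(G)$ bounded by $K$. Hence the connecting homomorphism $\partial\co H_2(Y,K)\to H_1(K)$ sends $\varphi-\varphi_0$ to $0$, and by the long exact sequence of the pair there is $\tilde\eta\in H_2(Y)$ with $j_*\tilde\eta=\varphi-\varphi_0$, where $j\co(Y,\emptyset)\to(Y,K)$ is the inclusion.

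On the cohomology side, $\xi_k$ and $\xi_l$ both have underlying absolute Spin$^c$ structure $\mathfrak s$, so the standard property that $c_1$ of a relative Spin$^c$ structure restricts (under $j^*\co H^2(Y,K)\to H^2(Y)$) to $c_1$ of the underlying absolute Spin$^c$ structure gives
\[
j^*\bigl(c_1(\xi_k)-c_1(\xi_l)\bigr)=c_1(\mathfrak s)-c_1(\mathfrak s)=0.
\]
By naturality of the Kronecker pairing,
\[
\langle c_1(\xi_k)-c_1(\xi_l),\,\varphi-\varphi_0\rangle=\langle c_1(\xi_k)-c_1(\xi_l),\,j_*\tilde\eta\rangle=\langle j^*(c_1(\xi_k)-c_1(\xi_l)),\,\tilde\eta\rangle=0,
\]
which rearranges to the first equality in the statement.

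There is essentially no obstacle here; the only thing to be careful about is keeping straight which difference lives in which kernel/image under the long exact sequences of the pair $(Y,K)$, and invoking the correct naturality of the pairing $H^2(Y,K)\times H_2(Y,K)\to\mathbb Z$ with respect to $j\co Y\to(Y,K)$.
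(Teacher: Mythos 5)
Your proof is correct and follows essentially the same route as the paper: the paper likewise observes that $\varphi-\varphi_0$ is a closed (absolute) homology class, on which $c_1(\xi_k)$ evaluates as $c_1(\mathfrak s)$ independently of $k$, so that $\langle c_1(\xi_k),\varphi\rangle-(2k+1)$ is constant in $k$. Your phrasing via the long exact sequence of $(Y,K)$ and naturality of the Kronecker pairing is just a slightly more formal packaging of the same argument.
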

\begin{proof}
Since $\varphi-\varphi_0$ is represented by a closed surface in $Y\Bslash K$, we have
\begin{eqnarray*}
\langle c_1(\xi_k),\varphi\rangle-2k-1&=&\langle c_1(\xi_k),\varphi-\varphi_0\rangle\\
&=&\langle c_1(\mathfrak s),\varphi-\varphi_0\rangle
\end{eqnarray*}
is independent of $k$.
\end{proof}

\begin{lem}\label{lem:Top>0}
Let $F$ be a taut Seifert-like surface for a nontrivial knot $K$, and let
$$\Xi_{\max}=\left\{\xi\in \relspinc(Y,K)\left|\langle c_1(\xi),[F]\rangle=2+\chi_-(F)\right. \right\}.$$
Suppose $\mathfrak s\in\spinc(Y)$ is the underlying Spin$^c$ structure of some $\xi\in\Xi_{\max}$.
If $HF^+(Y,\mathfrak s)\ne0$, then $\langle c_1(\xi),\varphi_0\rangle\ge3$.
\end{lem}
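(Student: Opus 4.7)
The plan is to argue by contradiction using the four-dimensional adjunction inequality for Heegaard Floer cobordism maps, applied in the cobordism $X\co Y\to Y$. Suppose $\langle c_1(\xi),\varphi_0\rangle\le 1$. First I would construct a closed oriented embedded surface $\Sigma\subset X$ by perturbing $F$ slightly into the interior of $X$ and gluing it to $-G$ along a collar of $K\times\{0\}$. The hypothesis $[G]=\iota_*\varphi_0$ gives $[\Sigma]=\iota_*\beta$ in $H_2(X)$, where $\beta:=[F]-\varphi_0\in H_2(Y)$. In particular each component of $\Sigma$ has self-intersection zero, since every class in the image of $\iota_*\co H_2(Y)\to H_2(X)$ is representable by a surface in a slice $Y\times\{t\}$ whose push-off lives in a disjoint slice. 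The component of $\Sigma$ containing the glued disk has genus equal to that of the component $F_0$ of $F$ bounding $K$, which is at least $1$ since $K$ is nontrivial.

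Next I would choose $\mathfrak S\in\spinc(X)$ that restricts to $\mathfrak s$ on each boundary copy of $Y$ and satisfies $c_1(\mathfrak S|_{\overline{\mathbb{CP}^2}_i})=\pm E_i^*$ on each blow-up summand. The blow-up formula identifies $F^+_{X,\mathfrak S}$ up to sign with the identity on $HF^+(Y,\mathfrak s)$, which is nonzero by hypothesis, so the adjunction inequality is applicable. Applying it componentwise to $\Sigma$ yields
\[
|\langle c_1(\mathfrak S),[\Sigma]\rangle|\;\le\;\sum_i|\langle c_1(\mathfrak S),[\Sigma_i]\rangle|\;\le\;\sum_i\chi_-(\Sigma_i)\;=\;\chi_-(F)-1.
\]
On the other hand, $[\Sigma]=\iota_*\beta$ and $\mathfrak S|_Y=\mathfrak s$ give $\langle c_1(\mathfrak S),[\Sigma]\rangle=\langle c_1(\mathfrak s),\beta\rangle$, which by the argument in the proof of Lemma~\ref{lem:SpincDiff} equals $\langle c_1(\xi),[F]\rangle-\langle c_1(\xi),\varphi_0\rangle$. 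The condition $\xi\in\Xi_{\max}$ and the standing assumption force this to be at least $(\chi_-(F)+2)-1=\chi_-(F)+1$, contradicting the adjunction bound. Hence $\langle c_1(\xi),\varphi_0\rangle\ge 3$.

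The most delicate step is ensuring that $F^+_{X,\mathfrak S}$ is genuinely nonzero so that the adjunction inequality can be invoked. This relies on the blow-up formula, together with the observation that no sphere or torus component of $F$ (which tautness may permit) can collapse the cobordism map via the low-genus adjunction: the condition $\xi\in\Xi_{\max}$ forces $\langle c_1(\mathfrak s),[F_i]\rangle=\chi_-(F_i)$ on every closed component $F_i$, and this value vanishes on spheres and tori.
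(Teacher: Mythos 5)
Your strategy is genuinely different from the paper's, but as written it rests on an unproved black box. The inequality you invoke --- that $F^+_{X,\mathfrak S}\ne0$ forces $|\langle c_1(\mathfrak S),[\Sigma_i]\rangle|\le\chi_-(\Sigma_i)$ for every closed square-zero component $\Sigma_i$ --- is a four-dimensional adjunction inequality for $HF^+$ cobordism maps, and no such statement is available among the tools of this paper or directly quotable from the cited Ozsv\'ath--Szab\'o papers. Theorem~\ref{thm:AdjRel} is an adjunction \emph{relation} (an identity comparing maps for Spin$^c$ structures differing by $\mathrm{PD}[\Sigma]$), not a vanishing theorem. The usual route to such a vanishing result --- split $X$ along $Y\#(\Sigma_0\times S^1)$ using a tubular neighborhood of the square-zero surface and use that $HF^+(\Sigma_0\times S^1,\mathfrak t)=0$ when $|\langle c_1(\mathfrak t),[\Sigma_0]\rangle|>2g-2$ --- only shows, via the composition law~(\ref{eq:CobCompos}), that a \emph{sum} of $F^+_{X,\mathfrak S'}$ over all Spin$^c$ structures with the prescribed restrictions vanishes; isolating your particular $\mathfrak S$ requires an additional argument of the type carried out in Lemma~\ref{lem:SpincInj}, which you do not supply. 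Moreover the componentwise application is outside the scope of any adjunction inequality for the genus-$0$ components that tautness permits, and delicate for tori. So the central step of your proof is a genuine gap, even though the overall scheme could probably be repaired for the one component $F_0\cup(-G)$ of genus $\ge1$.

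Two further points. First, your last paragraph misidentifies the delicate issue and justifies it incorrectly: nonvanishing of $F^+_{X,\mathfrak S}$ is immediate from the blow-up formula plus $HF^+(Y,\mathfrak s)\ne0$, while the real issue is the low-genus components; and $\xi\in\Xi_{\max}$ does \emph{not} force $\langle c_1(\mathfrak s),[F_i]\rangle=\chi_-(F_i)$ componentwise (it constrains only the total evaluation, and the lemma's hypotheses give no upper bound on the contribution of the component containing $\partial F$). What you actually need is available three-dimensionally: the closed components of $\Sigma$ lie in a collar slice $Y\times\{t\}$, so their evaluations equal $\langle c_1(\mathfrak s),[F_i]\rangle$, and since $HF^+(Y,\mathfrak s)\ne0$ the $3$--dimensional adjunction inequality gives $0$ for tori, while a sphere component is non-separating (by tautness), so $Y$ has an $S^1\times S^2$ summand and the evaluation again vanishes. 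Second, once you are leaning on the three-dimensional adjunction anyway, note that the paper avoids the four-dimensional input entirely: it pushes $G$ into $Y$ by the projection $\pi$, glues the \emph{immersed} disk $\pi(-G)$ to $F$ to get an immersed closed surface of complexity $\chi_-(F)-1$ representing $\varphi-\varphi_0$, invokes Gabai's theorem that the singular Thurston norm equals the Thurston norm to bound $\chi_-(\varphi-\varphi_0)$, and then applies the $3$--dimensional adjunction inequality for $HF^+(Y,\mathfrak s)\ne0$ to reach the same numerical contradiction. That route needs no cobordism maps and no discussion of embeddedness or of the Spin$^c$ structure on the blow-ups.
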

\begin{proof}
Gluing $\pi(-G)$ to $F$, we get an immersed closed surface with $\chi_-=\chi_-(F)-1$ in $Y$ representing the homology class $\varphi-\varphi_0$. By Gabai's theorem that the singular Thurston norm is equal to the Thurston norm \cite[Corollary~6.18]{G1}, the Thurston norm of $\varphi-\varphi_0\in H_2(Y)$ is at most $\chi_-(F)-1$.
Since $HF^+(Y,\mathfrak s)\ne0$, it follows from the adjunction inequality that
\begin{eqnarray*}
\chi_-(F)-1&\ge&\langle c_1(\mathfrak s),\varphi-\varphi_0\rangle\\
&=&\langle c_1(\xi),\varphi-\varphi_0\rangle\\
&=&\chi_-(F)+2-\langle c_1(\xi),\varphi_0\rangle.
\end{eqnarray*}
Hence $\langle c_1(\xi),\varphi_0\rangle\ge3$.
\end{proof}

\begin{proof}[Proof of Theorem~\ref{thm:Slice} for Property G1]
Suppose that $F$ is a taut Seifert-like surface for $K$.
Let $\Xi_{\max}$ be as in Lemma~\ref{lem:Top>0}, then there exists $\xi\in \Xi_{\max}$ with $\widehat{HFK}(Y,K,\xi)\ne0$ by Theorem~\ref{thm:SeifertNorm}.
Let $\mathfrak s\in\spinc(Y)$ be the underlying Spin$^c$ structure of $\xi$. By Lemma~\ref{lem:SpincDiff}, if $\mathfrak s$ is the underlying Spin$^c$ structure of another $\eta\in\relspinc(Y,K)$ with $\widehat{HFK}(Y,K,\eta)\ne0$, then 
\[\langle c_1(\xi)-c_1(\eta),\varphi_0\rangle=\langle c_1(\xi)-c_1(\eta),\varphi\rangle\ge0.\] 
Suppose that $\langle c_1(\xi),\varphi_0\rangle=2d+1$, then the above inequality means that
\begin{equation}\label{eq:dMax}
d=\max\left\{i\in\mathbb Z\left| \widehat{HFK}(Y,K,\mathfrak s,\varphi_0,i)\ne0\right.\right\}.
\end{equation}

If $\mathfrak s,d$ satisfy one of the three conditions in Proposition~\ref{prop:ZeroTorsEqual}, it follows from Proposition~\ref{prop:ZeroTorsEqual} and (\ref{eq:dMax}) that $HF^+(Y_0(K),\mathfrak t_{d-1})\ne0$.
If $\mathfrak s,d$ do not satisfy any of the three conditions in Proposition~\ref{prop:ZeroTorsEqual}, then $c_1(\mathfrak s)$ is torsion, and $d=1$ by Lemma~\ref{lem:Top>0}. Hence $\mathfrak t_{d-1}$ is also torsion, so we also have $HF^+(Y_0(K),\mathfrak t_{d-1})\ne0$.

We have
\begin{eqnarray}
\langle c_1(\mathfrak t_{d-1}),[\widehat F]\rangle&=&\langle c_1(\mathfrak t_{d-1}),[\widehat F]-\widehat{\varphi}_0\rangle+\langle c_1(\mathfrak t_{d-1}),\widehat{\varphi}_0\rangle\nonumber\\\nonumber
&=& \langle c_1(\xi),[F]-\varphi_0\rangle+2d-2\\\nonumber
&=&\chi_-(F)+2-(2d+1)+2d-2\\\nonumber
&=&\chi_-(F)-1\\
&=&\chi_-(\widehat F).\label{eq:chiF}
\end{eqnarray}
Since $HF^+(Y_0(K),\mathfrak t_{d-1})\ne0$, it follows from Theorem~\ref{thm:ThurstonNorm} that $\widehat F$ is Thurston norm minimizing.

In order to show that $\widehat F$ is taut, we only need to prove that if $T\subset \widehat F$ is a torus component, then $[T]$ is not represented by a sphere. If $F$ is disconnected, then $b_1(Y)>0$, and this case follows from Proposition~\ref{prop:b1>0}, which will be proved in Subsection~\ref{subsect:Genus1}. If $F$ is connected, then $\chi_-(F)=1$, and this case will also be proved in Subsection~\ref{subsect:Genus1}.
\end{proof}

\begin{proof}[Proof of Theorem~\ref{thm:Slice} for Property G2]
Without loss of generality, we may assume $K$ is not the unknot. As in the proof in Subsection~\ref{subsect:4genus}, $Y\Bslash K$ is irreducible.

If $g(S)>1$, by \cite[Theorem~5.2]{OSzSympl}, there exists
 $\mathfrak t\in \spinc(Y_0(K))$ which is the unique Spin$^c$ structure such that $HF^+(Y_0(K),\mathfrak t)\ne0$ and $\langle c_1(\mathfrak t),\widehat{\varphi}\rangle=2g(S)-2$. Moreover,
$HF^+(Y_0(K),\mathfrak t)\cong\mathbb F$.

Suppose $\xi\in \relspinc(Y,K)$ satisfies that $\langle c_1(\xi),\varphi\rangle=2g(S)+1$.
Let $\mathfrak s\in\spinc(Y)$ be the underlying Spin$^c$ structure of $\xi$, and let $\mathfrak r\in\spinc(Y_0(K))$ be the extension of $\xi$.

\noindent{\bf Claim.} If $\widehat{HFK}(Y,K,\xi)\ne0$, then $\mathfrak t=\mathfrak r-\mathrm{PD}[\mu]$, and $\widehat{HFK}(Y,K,\xi)\cong\mathbb F$.

If $\widehat{HFK}(Y,K,\xi)\ne0$,
let $d\in\mathbb Z$ be defined by
$$\langle c_1(\xi),\varphi_0\rangle=2d+1.$$
By Property G1, there exists no $\eta\in\relspinc(Y,K)$ such that $\langle c_1(\eta),\varphi\rangle>2g(S)+1$.
It follows from Lemma~\ref{lem:SpincDiff} that
(\ref{eq:dMax}) holds.

If $\mathfrak s,d$ satisfy one of the three conditions in Proposition~\ref{prop:ZeroTorsEqual}, then $$HF^+(Y_0(K),\mathfrak t_{d-1})\ne0.$$ As we have computed in (\ref{eq:chiF}),
$\langle c_1(\mathfrak t_{d-1}),\widehat{\varphi}\rangle=2g(S)-2$, hence we must have $\mathfrak t_{d-1}=\mathfrak t$.
It follows from Proposition~\ref{prop:ZeroTorsEqual} that $\widehat{HFK}(Y,K,\xi)\cong\mathbb F$.
Our claim follows.

If $\mathfrak s,d$ do not satisfy any of the three conditions in Proposition~\ref{prop:ZeroTorsEqual},
Lemma~\ref{lem:Top>0} implies that $d=1$ and $c_1(\mathfrak s)$ is torsion, we have
\begin{eqnarray*}
2g(S)+1&=&\langle c_1(\xi),\varphi\rangle\\
&=&\langle c_1(\xi),\varphi-\varphi_0\rangle+\langle c_1(\xi),\varphi_0\rangle\\
&=&\langle c_1(\mathfrak s),\varphi-\varphi_0\rangle+2d+1\\
&=&3,
\end{eqnarray*}
a contradiction to our assumption that $g(S)>1$.
This proves our claim.

Let $F$ be a taut Seifert-like surface for $K$ with $[F]=\varphi$. By Property G1, $\widehat F$ is taut in $Y_0(K)$. Since $[\widehat{F}]=\widehat{\varphi}$ is the homology class of the fiber, $\widehat{F}$ must be isotopic to the fiber. (See the proof in Subsection~\ref{subsect:4genus} for a reference for this result.) In particular, $F$ is connected. 
Our theorem in this case follows from the claim and Theorem~\ref{thm:KnotFibered}.

The case $g(S)=1$ will be treated in Subsection~\ref{subsect:Genus1}.
\end{proof}

\subsection{The $\chi_-(F)=1$ case}\label{subsect:Genus1}

In this subsection, we will prove the $\chi_-(F)=1$ case of Theorem~\ref{thm:Slice}, which also implies the $g(F)=1$ case of Theorem~\ref{thm:4genus}. We first give a proof of Proposition~\ref{prop:b1>0}.

\begin{proof}[Proof of Proposition~\ref{prop:b1>0}]
Let $M=Y\Bslash K$. Since $b_1(Y)>0$, there exists a closed, oriented, connected surface $S$ in the interior of $M$, such that $S$ is taut in $M$. Notice that for the $\infty$ slope on $K$, the core of the surgery solid torus in $Y_{\infty}(K)=Y$, which is $K$, is null-homotopic. Using a theorem of Lackenby \cite[Theorem~A.21]{Lackenby}, which is a stronger version of the main result in \cite{G2}, we conclude that each $2$--sphere in $Y_0(K)$ bounds a rational homology ball. Hence $Y_0(K)$ does not have an $S^1\times S^2$ connected summand.
\end{proof}

To prove the $\chi_-(F)=1$ case of Theorem~\ref{thm:Slice},
we use the argument in \cite{AiNi}. We will use twisted coefficients in the Novikov ring $\Lambda=\mathbb F[[T,T^{-1}]$.

\begin{prop}\label{prop:NoSphere1}
Let $Y,K,X$ be as in Theorem~\ref{thm:Slice}, and let $G\subset X$ be the disk bounded by $K$. Suppose that $K$ is nontrivial and $F$ is a genus--$1$ Seifert surface for $K$, then there does not exist an embedded sphere $P\subset Y_0(K)$ such that $[P]\cdot[\mu]\ne0$, where $\mu$ is the meridian of $K$.
\end{prop}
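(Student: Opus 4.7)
The plan is to argue by contradiction using twisted Heegaard Floer homology with Novikov coefficients $\Lambda=\mathbb Q[[T,T^{-1}]$, in the spirit of \cite{AiNi}. Assume such a sphere $P$ exists. Since $P$ is a $2$--sphere in a closed oriented $3$--manifold and $[P]\cdot[\mu]\ne 0$, $P$ must be non-separating, so $Y_0(K)\cong Y''\#(S^1\times S^2)$, and $[\mu]$ has nonzero projection onto the $S^1$ factor of this summand. Hence the twisting ring homomorphism $\mathbb Z[H^1(Y_0(K))]\to\mathbb Z[T,T^{-1}]$ given by evaluation against $[\mu]$ is nontrivial on the $S^1\times S^2$ summand. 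Combining the K\"unneth formula for twisted Heegaard Floer homology with the standard vanishing $\underline{HF^+}(S^1\times S^2,\mathfrak s_0;\Lambda)=0$ (the $T-1$ multiplier from the $H_1$--action becomes invertible in the field $\Lambda$ and annihilates both $\underline{HF^\infty}$ and $\underline{HF^+}$), one obtains
\[
\underline{HF^+}(Y_0(K),\mathfrak t_0;\Lambda)=0.
\]

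By Theorem~\ref{thm:ZeroMCTwisted}, this vanishing is equivalent to the statement that
\[
v^+_0+T\cdot h^+_0\,:\,A^+_0\otimes\Lambda\longrightarrow B^+\otimes\Lambda
\]
is a quasi-isomorphism. I would then establish a twisted analogue of Lemma~\ref{lem:vhIm} using the slice disk $G$. Applying the adjunction relation of Theorem~\ref{thm:AdjRel} to the closed sphere $\Sigma=(-G)\cup D\subset W=W'_n(K)\#N\overline{\mathbb CP^2}$ (whose adjunction class $\zeta$ is a scalar since $g(\Sigma)=0$), while tracking the Novikov twisting factors that cobordism maps carry in the decomposition underlying Theorem~\ref{thm:ZeroMCTwisted}, should yield an identity $(v^+_0)_*=T\cdot(h^+_0)_*$ on homology: the exponent $1$ is $\tfrac1n\langle\mathfrak y_0-\mathfrak x_0,\widehat\varphi_n\rangle$, and the sign $+1$ follows from reduction to the untwisted Lemma~\ref{lem:vhIm} at $T=1$. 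Consequently $v^+_0+T h^+_0=2T\cdot(h^+_0)_*$, which is a unit multiple of $(h^+_0)_*\simeq(v^+_0)_*$ in $\Lambda$, so being a quasi-isomorphism is equivalent to $(v^+_0)_*$ being an isomorphism of $\Lambda$--modules.

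Finally, tautness of the genus--$1$ Seifert surface $F$ together with Theorem~\ref{thm:SeifertNorm} gives $\widehat{HFK}(Y,K,[F],1)\ne 0$, and this non-vanishing survives the base change to $\Lambda$. By the exact triangle (\ref{eq:v(d-1)}) at $d=1$, either $\ker(v^+_0)_*$ or $\mathrm{coker}(v^+_0)_*$ is nonzero over $\Lambda$, so $(v^+_0)_*$ is not an isomorphism, contradicting the previous paragraph and completing the proof. The main obstacle will be rigorously establishing the twisted version of Lemma~\ref{lem:vhIm}, in particular pinning down the exact sign and power of $T$ relating $(v^+_0)_*$ and $(h^+_0)_*$ so that $v^+_0+Th^+_0$ is a $\Lambda$--unit multiple of one of them rather than vanishing identically; one also needs to verify that the connected-sum vanishing of twisted $\underline{HF^+}$ applies to the specific Spin$^c$ structure $\mathfrak t_0$ corresponding to the $\widehat{HFK}$ nonvanishing above.
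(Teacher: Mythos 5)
Your overall strategy is exactly the paper's: assume $P$ exists, deduce $\underline{HF^+}(Y_0(K);\Lambda)=0$ as in \cite{NiNSSphere}, use the slice disk to relate $(v^+_0)_*$ and $(h^+_0)_*$, conclude via Theorem~\ref{thm:ZeroMCTwisted} that $(v^+_0)_*$ is an isomorphism, and contradict $\widehat{HFK}(Y,K,[F],1)\ne0$ (Theorem~\ref{thm:SeifertNorm}) through the exact triangle (\ref{eq:v(d-1)}) at $d=1$. Your sketch of the vanishing step (non-separating sphere gives an $S^1\times S^2$ summand on which the twisting is nontrivial, then K\"unneth) is the content of the cited result, and the vanishing holds for every Spin$^c$ structure, so the worry about $\mathfrak t_0$ is harmless.

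The one substantive issue is the step you flag as your ``main obstacle'': there is no twisted analogue of Lemma~\ref{lem:vhIm} to establish, and the identity $(v^+_0)_*=T\cdot(h^+_0)_*$ is not the right statement. In Theorem~\ref{thm:ZeroMCTwisted} the maps $v^+_0$ and $h^+_0$ are the \emph{untwisted} maps of Subsection~\ref{Subsect:KnotFloer}, extended $T$--linearly; the entire effect of the twisting is the explicit factor $T$ already present in the mapping cone $v^+_0+Th^+_0$. Hence the input you need is precisely the untwisted equality $(v^+_0)_*=(h^+_0)_*$ of Lemma~\ref{lem:vhIm} (the disk case), which the paper has already proved via the adjunction relation; on homology the cone map becomes $(1+T)(v^+_0)_*$ over $\Lambda$, and since $1+T$ is a unit in the Novikov field (and a possible overall sign would only change it to $1-T$, also a unit), quasi-isomorphism of the cone is equivalent to $(v^+_0)_*$ being an isomorphism, with no danger of the combination vanishing identically. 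Your arithmetic $(v^+_0)_*+T(h^+_0)_*=2T(h^+_0)_*$ happens to land on the same conclusion, but the premise behind it conflates the twisting built into the theorem with a property of the maps themselves; once that is untangled, the obstacle you describe disappears and your argument coincides with the paper's proof.
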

\begin{proof}
We will consider 
\[\underline{HF^{+}}(Y_0(K);\Lambda):=\underline{HF^{+}}(Y_0(K);\mathbb F[T,T^{-1}])\otimes_{\mathbb F[T,T^{-1}]} \Lambda.\]
As in \cite{NiNSSphere}, if there exists such a sphere $P$, then $\underline{HF^{+}}(Y_0(K);\Lambda)=0$.

By Lemma~\ref{lem:vhIm}, we have $(v^+_0)_*=(h^+_0)_*$. It follows from Theorem~\ref{thm:ZeroMCTwisted} that $(v^+_0)_*$ is an isomorphism, which would imply that $\widehat{HFK}(Y,K,[F],1)=0$ by the exact triangle (\ref{eq:RankHFK}), where $d=1$.
This contradicts Theorem~\ref{thm:SeifertNorm}.
\end{proof}

\begin{proof}[Proof of Theorem~\ref{thm:Slice} when $\chi_-(F)=1$]
We first prove Property G1.
Without loss of generality, we assume that $Y\Bslash K$ is irreducible.

Since $\chi_-(F)=1$ and $Y\Bslash K$ is irreducible, every component of $\widehat F$ is a torus. To prove Property G1, we just need to show that $Y_0(K)$ contains no non-separating $2$--spheres. This follows from 
Proposition~\ref{prop:NoSphere1} when $b_1(Y)=0$, and from
Proposition~\ref{prop:b1>0} when $b_1(Y)>0$. 

Now we prove Property G2. Suppose that $Y_0(K)$ is a torus bundle over $S^1$, with fiber in the homology class $\widehat{\varphi}$. As argued in the last subsection, $\varphi$ is represented by a genus--$1$ Seifert surface $F$, since we already know Property G1 in this case. By \cite{AiPeters}, $\underline{HF^{+}}(Y_0(K);\Lambda)\cong\Lambda$. By Lemma~\ref{lem:vhIm} and Theorem~\ref{thm:ZeroMCTwisted}, we see that the homology of the mapping cone of \[(1+T)(v_0^+)\co A^+_0[[T,T^{-1}]\to B^+[[T,T^{-1}]\] is $\Lambda$. Hence the homology of the mapping cone of \[v_0^+\co A^+_0\to B^+\] is $\mathbb F$.
Using (\ref{eq:v(d-1)}), we get $\widehat{HFK}(Y,K,[F],1)\cong\mathbb F$, so $K$ is fibered with fiber $F$ by  Theorem~\ref{thm:KnotFibered}.
\end{proof}


\section{Knots in torus bundles}\label{sect:TB}

In this section, we will prove Theorem~\ref{thm:TB}. Let $Y$ be a $T^2$--bundle over $S^1$, $K\subset Y$ be a null-homologous knot. Let $\gamma\subset Y\Bslash K$ be a loop which intersects a torus fiber exactly once transversely, and $\mu\subset Y\Bslash K$ be a meridian of $K$. As in Section~\ref{sect:ZeroMC}, the evaluation over $[\gamma]+[\mu]$ defines a ring homomorphism $\mathbb Z[H^1(\cdot)]\to \mathbb Z[\mathbb Z]=\mathbb Z[T,T^{-1}]$. Using this evaluation, we will get the corresponding twisted Heegaard Floer homology $\underline{HF^{\circ}}(\cdot;\Lambda)$ with coefficients in the Novikov ring $\Lambda$. The same argument as in \cite{OSzGenus} and \cite{NiFibred} shows that twisted knot Floer homology in this case detects the genus of a knot and whether a knot is fibered. See also \cite[Section~3.3]{NiNSSphere}. 

\begin{proof}[Proof of Theorem~\ref{thm:TB}]
Without loss of generality, we assume $K$ is nontrivial.
Let $\varphi\in H_2(Y,K)$ be a Seifert homology class, and let 
$$d=\max\left\{i\in\mathbb Z\left| \underline{\widehat{HFK}}(Y,K,\varphi,i)\ne0\right.\right\}.$$
Then $\underline{\widehat{HFK}}(Y,K,\varphi,d;\Lambda)$ is isomorphic to the mapping cone of
\[
\underline{v_{d-1}^+}\co \underline{A^+_{d-1}}\to \underline{B^+},
\]
and
$\underline{HF^+}(Y_0(K),\widehat{\varphi},d-1;\Lambda)$ is isomorphic to the mapping cone of
\[
\underline{v_{d-1}^+}+\underline{h_{d-1}^+}\co \underline{A^+_{d-1}}\to \underline{B^+}.
\]
Here $\underline{A^+_{d-1}}$ and $\underline{B^+}$ are twisted chain complexes.

Since $([\gamma]+[\mu])\cdot[\text{torus fiber}]\ne0$,
by \cite{AiPeters},
\[
H_*(\underline{B^+})=\underline{HF^+}(Y;\Lambda)\cong\Lambda,
\]
which is a $1$--dimensional vector space over the field $\Lambda$. So $(\underline{v_{d-1}^+})_*$ is either surjective or $0$. 

If $(\underline{v_{d-1}^+})_*$ is surjective, the same argument of counting dimensions as in Case~1 of Proposition~\ref{prop:ZeroTorsEqual} shows that 
\[
\mathrm{rank}_{\Lambda}\underline{HF^+}(Y_0(K),\widehat{\varphi},d-1;\Lambda)\ge\mathrm{rank}_{\Lambda}\underline{\widehat{HFK}}(Y,K,\varphi,d;\Lambda).
\]
Hence Property G follows just as before.

If $(\underline{v_{d-1}^+})_*$ is $0$, then $(\underline{v_{0}^+})_*=0$ by (\ref{eq:vIm}). Since $(\underline{v_{0}^+})_*$ and $(\underline{h_{0}^+})_*$ have the same rank, we also have $(\underline{h_{0}^+})_*=0$. Thus $(\underline{h_{d-1}^+})_*=0$ by (\ref{eq:hIm}).
So both mapping cones are quasi-isomorphic to $\underline{A^+_{d-1}}\oplus\underline{B^+}$, and Property G still follows.
\end{proof}

\setcounter{section}{0}
\renewcommand{\thesection}{\Alph{section}}

\section{Appendix: A proof of Theorem~\ref{thm:OriginalMC}}

In this appendix, we will sketch a proof of the version of Theorem~\ref{thm:OriginalMC} with $\mathbb F_2=\mathbb Z/2\mathbb Z$ coefficients, following the strategy of the proof of \cite[Theorem~4.5]{OSzBrDoubCov}. The reason that the proof only works over $\mathbb F_2$ is that the key lemma in homological algebra (Lemma~\ref{lem:HomAlg}) is only stated over $\mathbb F_2$. As remarked in a footnote in \cite[Section~4]{OSzBrDoubCov}, the proof can be carried over to $\mathbb Z$ coefficients routinely. However, since $\mathbb F_2$ coefficients are enough for the applications in this paper, we are satisfied with the current version.

The following lemma in homological algebra is \cite[Lemma~4.2]{OSzBrDoubCov}.

\begin{lem}\label{lem:HomAlg}
Let $\{A_i\}_{i=1}^{\infty}$ be a collection of chain complexes of $\mathbb F_2$--vector spaces and let
\[\{f_i:A_i\to A_{i+1}\}_{i=1}^{\infty}\]
be a collection of chain maps satisfying the following two properties:\newline
(1) $f_{i+1}\circ f_i$ is chain homotopically trivial, by a chain homotopy 
\[H_i: A_i\to A_{i+2},\]
(2) the map
\[
\phi_i=f_{i+2}\circ H_i+H_{i+1}\circ f_i: A_i\to A_{i+3}
\]
is a quasi-isomorphism.
\newline
Then the map
\[
\psi_i: A_{i}\to MC(f_{i+1})
\]
defined by
\[\psi_i(a_i)=(f_{i}(a_{i}),H_i(a_i))\]
is a quasi-isomorphism.
\end{lem}

Suppose that we have a pointed Heegaard $(n+1)$--tuple 
\[
(\Sigma, \mbox{\boldmath${\xi}$}^0,\mbox{\boldmath${\xi}$}^1,\dots,\mbox{\boldmath${\xi}$}^n,z)
\]
satisfying certain admissibility conditions. There is a standard way to define a map
\[
\mu_n:\bigotimes_{i=1}^nCF^+(\Sigma,\mbox{\boldmath${\xi}$}^{i-1},\mbox{\boldmath${\xi}$}^i,z)\to CF^+(\Sigma,\mbox{\boldmath${\xi}$}^{0},\mbox{\boldmath${\xi}$}^n,z)
\]
by counting pseudo-holomorphic $(n+1)$--gons,
where $CF^+(\Sigma,\mbox{\boldmath${\xi}$}^i,\mbox{\boldmath${\xi}$}^j,z)$ is the Heegaard Floer chain complex constructed for the Heegaard diagram $(\Sigma,\mbox{\boldmath${\xi}$}^i,\mbox{\boldmath${\xi}$}^j,z)$. These maps satisfy a well-known generalized associativity property
\begin{equation}\label{eq:Ainfty}
\sum_{i+j=n+1}\sum_{\ell=1}^{n-j+1}\mu_i(a_1\otimes\cdots\otimes a_{\ell-1}\otimes \mu_j(a_{\ell}\otimes\cdots\otimes a_{\ell+j-1})\otimes a_{\ell+j}\otimes\cdots\otimes a_n)=0.
\end{equation}

Let 
$$(\Sigma,\mbox{\boldmath${\alpha}$},\mbox{\boldmath${\beta}$},\mbox{\boldmath${\gamma}$}, \mbox{\boldmath${\delta}$}, w),$$
be the genus $h$ Heegaard quadruple constructed for the pair $(Y,K)$ in Section~\ref{sect:ZeroMC}.
We will consider a sequence of $h$--tuples of attaching curves $\{\mbox{\boldmath${\eta}$}^{i}\}_{i=1}^{\infty}$, where $\mbox{\boldmath${\eta}$}^{1}=\mbox{\boldmath${\beta}$}$ and $\mbox{\boldmath${\eta}$}^{3i+1}$ ($i>0$) consists of small Hamiltonian translates of curves in $\mbox{\boldmath${\beta}$}$, $\mbox{\boldmath${\eta}$}^{2}=\mbox{\boldmath${\gamma}$}$ and $\mbox{\boldmath${\eta}$}^{3i+2}$ ($i>0$) consists of small Hamiltonian translates of curves in $\mbox{\boldmath${\gamma}$}$, $\mbox{\boldmath${\eta}$}^{3}=\mbox{\boldmath${\delta}$}$ and $\mbox{\boldmath${\eta}$}^{3i+3}$ ($i>0$) consists of small Hamiltonian translates of curves in $\mbox{\boldmath${\delta}$}$. Let
\[
\mbox{\boldmath${\eta}$}^{i}=(\eta_1^{i},\dots,\eta_h^{i}).
\]

Note that $(\Sigma,\mbox{\boldmath${\eta}$}^{3i+1},\mbox{\boldmath${\eta}$}^{3i+2},w)$ is a Heegaard diagram for $\#^{h-1}S^1\times S^2$, $i\ge0$.
Let $\Theta_{3i+1,3i+2}$ be the top generator of $\widehat{CF}(\Sigma,\mbox{\boldmath${\eta}$}^{3i+1},\mbox{\boldmath${\eta}$}^{3i+2},w)$. 
Similarly, define $\Theta_{3i+3,3i+4}$ to be  the top generator of $\widehat{CF}(\Sigma,\mbox{\boldmath${\eta}$}^{3i+3},\mbox{\boldmath${\eta}$}^{3i+4},w)$, $i\ge0$.

The diagram
$(\Sigma,\mbox{\boldmath${\eta}$}^{3i+2},\mbox{\boldmath${\eta}$}^{3i+3},w)$ is a Heegaard diagram for $(\#^{h-1}S^1\times S^2)\#L(n,1)$, $i\ge0$. There are $n$ generators of $\widehat{CF}(\Sigma,\mbox{\boldmath${\eta}$}^{3i+2},\mbox{\boldmath${\eta}$}^{3i+3},w)$ which can be viewed as ``top'' generators, but only one of them corresponds to $\mathfrak t_k$. Let this generator be $\Theta_{3i+2,3i+3}$. (More precisely, the pair of pants construction gives us a cobordism 
\[X_{\alpha,\eta^{3i+2},\eta^{3i+3}}:Y_0(K)\sqcup (\#^{h-1}S^1\times S^2)\#L(n,1) \to Y_n(K).\] Then the Spin$^c$ structure associated with $\Theta_{3i+2,3i+3}$ is the restriction of the Spin$^c$ structure over $X_{\alpha,\eta^{3i+2},\eta^{3i+3}}$ that extends $\mathfrak t_k$.)

Now we define the map
\[
f_i: CF^+(\Sigma,\mbox{\boldmath${\alpha}$},\mbox{\boldmath${\eta}$}^{i},w)\to CF^+(\Sigma,\mbox{\boldmath${\alpha}$},\mbox{\boldmath${\eta}$}^{i+1},w)
\]
by 
\[
f_i(x)=\mu_2(x,\Theta_{i,i+1}).
\]
It is a chain map by (\ref{eq:Ainfty}) and the fact that
\begin{equation}\label{eq:mu1theta}
\mu_1(\Theta_{i,i+1})=0.
\end{equation}

We also define
\[
H_i: CF^+(\Sigma,\mbox{\boldmath${\alpha}$},\mbox{\boldmath${\eta}$}^{i},w)\to CF^+(\Sigma,\mbox{\boldmath${\alpha}$},\mbox{\boldmath${\eta}$}^{i+2},w)
\]
by
\[
H_i(x)=\mu_3(x,\Theta_{i,i+1},\Theta_{i+1,i+2}).
\]
It is standard to check 
\begin{equation}\label{eq:fComb}
f_{i+1}\circ f_i=\partial\circ H_i+H_{i}\circ\partial
\end{equation}
using (\ref{eq:Ainfty}), (\ref{eq:mu1theta}) and the fact that
\begin{equation}\label{eq:mu2theta}
\mu_2(\Theta_{i,i+1},\Theta_{i+1,i+2})=0.
\end{equation}

We then define
\[
G_i: CF^+(\Sigma,\mbox{\boldmath${\alpha}$},\mbox{\boldmath${\eta}$}^{i},w)\to CF^+(\Sigma,\mbox{\boldmath${\alpha}$},\mbox{\boldmath${\eta}$}^{i+3},w)
\]
by
\[
G_i(x)=\mu_4(x,\Theta_{i,i+1},\Theta_{i+1,i+2},\Theta_{i+2,i+3}).
\]
Let  
\[
\sigma: CF^+(\Sigma,\mbox{\boldmath${\alpha}$},\mbox{\boldmath${\eta}$}^{i},w)\to CF^+(\Sigma,\mbox{\boldmath${\alpha}$},\mbox{\boldmath${\eta}$}^{i+3},w)
\]
be the map
defined by \[\sigma(x)=\mu_2(x,\mu_3(\Theta_{i,i+1},\Theta_{i+1,i+2},\Theta_{i+2,i+3})).\]
By (\ref{eq:mu1theta}), $\Theta_{i,i+1},\Theta_{i+1,i+2},\Theta_{i+2,i+3}$ are all cycles. It follows from (\ref{eq:Ainfty}) and (\ref{eq:mu2theta}) that $\mu_3(\Theta_{i,i+1},\Theta_{i+1,i+2},\Theta_{i+2,i+3})$ is a cycle. So $\sigma$ is a chain map.
Using (\ref{eq:Ainfty}), (\ref{eq:mu1theta}) and (\ref{eq:mu2theta}),
we get
\begin{equation}\label{eq:FiveTerms}
\partial\circ G_i+G_{i}\circ\partial+f_{i+2}\circ H_i+H_{i+1}\circ f_{i}+\sigma=0.
\end{equation}

We claim that
\begin{equation}\label{eq:mu3theta}
\mu_3(\Theta_{i,i+1},\Theta_{i+1,i+2},\Theta_{i+2,i+3})=\Theta_{i,i+3}+Uy,
\end{equation}
where $\Theta_{i,i+3}$ is the top generator of $\widehat{CF}(\Sigma,\mbox{\boldmath${\eta}$}^{i},\mbox{\boldmath${\eta}$}^{i+3},w)$, and $y$ is some element in $CF^+(\Sigma,\mbox{\boldmath${\eta}$}^{i},\mbox{\boldmath${\eta}$}^{i+3},w)$. 

Since $\mbox{\boldmath${\eta}$}^{i+3}$ is a Hamiltonian translate of $\mbox{\boldmath${\eta}$}^{i}$, $\mu_2(\cdot,\Theta_{i,i+3})$ is a chain homotopy equivalence. 
The claim implies that 
\[
\sigma=\iota+U\rho,
\]
where $\iota$ is a chain homotopy equivalence, and $U\rho$ is a chain map. (It is not clear whether $\rho$ is a chain map.)

Let 
\[\kappa: CF^+(\Sigma,\mbox{\boldmath${\alpha}$},\mbox{\boldmath${\eta}$}^{i+3},w)\to CF^+(\Sigma,\mbox{\boldmath${\alpha}$},\mbox{\boldmath${\eta}$}^{i},w)\] be a chain map which is an inverse to $\iota$ up to chain homotopy. 
Consider the chain map
\begin{eqnarray*}
\kappa'&=&\kappa(\mathrm{id}-U\rho\kappa+(U\rho\kappa)^2-(U\rho\kappa)^3+\cdots)\\
&=&(\mathrm{id}-\kappa U\rho+(\kappa U\rho)^2-(\kappa U\rho)^3+\cdots)\kappa.
\end{eqnarray*}
For any $x\in CF^+(\Sigma,\mbox{\boldmath${\alpha}$},\mbox{\boldmath${\eta}$}^{i+3},w)$, $U^Nx=0$ when $N$ is sufficiently large. So $\kappa'$ is well-defined,
and we can compute
\[
(\iota+U\rho)\kappa'\simeq\mathrm{id},\quad \kappa'(\iota+U\rho)\simeq\mathrm{id}.
\] 
So $\sigma=\iota+U\rho$ is a quasi-isomorphism.
Now (\ref{eq:FiveTerms}) implies that \[f_{i+2}\circ H_i+H_{i+1}\circ f_{i}\] is chain homotopic to a quasi-isomorphism, so it is a quasi-isomorphism as well. This finishes checking Condition~(2) in Lemma~\ref{lem:HomAlg}. Condition~(1) in Lemma~\ref{lem:HomAlg} is just (\ref{eq:fComb}).

To finish the proof of Theorem~\ref{thm:OriginalMC}, we only need to prove (\ref{eq:mu3theta}), which is essentially \cite[Equation~(11)]{OSzBrDoubCov}. In fact, as in \cite{OSzBrDoubCov}, the proof reduces to the computation in a genus--$1$ surface. 

\begin{figure}[ht]
\begin{picture}(340,200)
\put(70,0){\scalebox{0.51}{\includegraphics*
{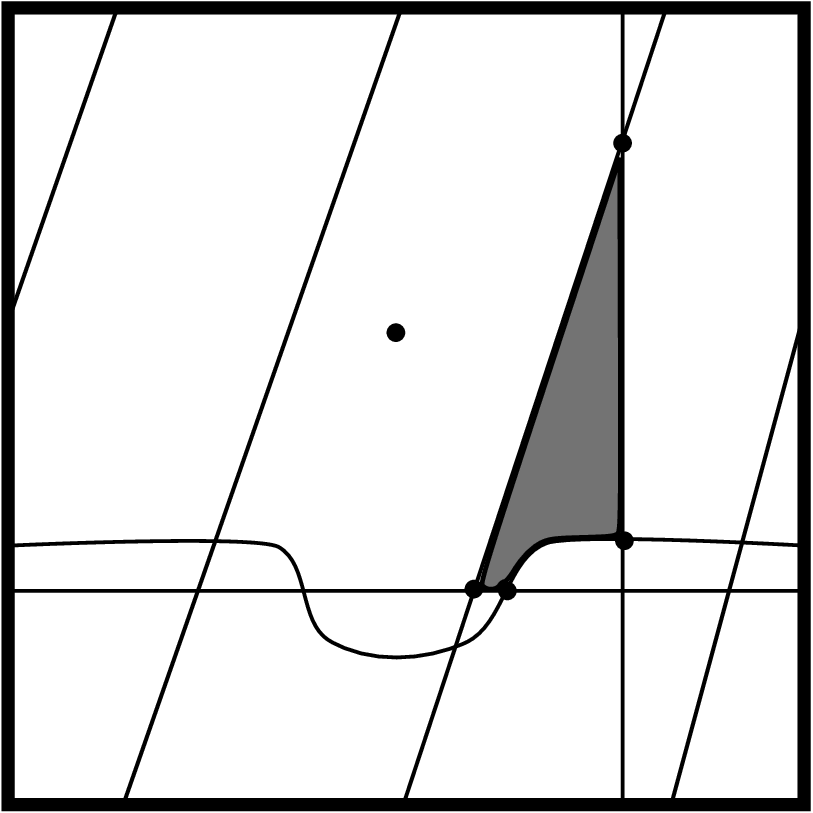}}}

\put(90,44){$\eta^2$}

\put(125,100){$\eta^3$}

\put(225,110){$\eta^4$}

\put(90,72){$\eta^5$}

\put(170,111){$w$}

\put(171,60){$\scriptstyle\Theta_{2,3}$}

\put(194,46){$\scriptstyle\Theta_{2,5}$}

\put(225,160){$\scriptstyle\Theta_{3,4}$}

\put(224,71){$\scriptstyle\Theta_{4,5}$}

\end{picture}
\caption{The domain of a holomorphic quadrilateral}\label{fig:Quad}
\end{figure}

There are two small triangles not containing $w$ which contribute to \[\mu_2(\Theta_{i,i+1},\Theta_{i+1,i+2}).\] (These two triangles cancel, which is part of the reason for (\ref{eq:mu2theta}).) The curve $\eta^{i+3}$ cuts exactly one of these two triangles so as to get a positive quadrilateral with $\Theta_{i,i+1}$ and $\Theta_{i+1,i+2}$ being two of its vertices. The domain of this quadrilateral when $i=2$ is shown in Figure~\ref{fig:Quad}. 
All other quadrilaterals contributing to \[\mu_3(\Theta_{i,i+1},\Theta_{i+1,i+2},\Theta_{i+2,i+3})\] necessarily contain the base point $w$. So we have (\ref{eq:mu3theta}).

\end{document}